\documentclass[a4paper,12pt]{amsart}
\usepackage{amssymb,amstext,amsmath,amscd,amsthm,amsfonts,enumerate,latexsym, comment}
\usepackage{color}
\usepackage{graphicx}

\usepackage[all]{xy}
\usepackage{hyperref}
\usepackage{needspace,booktabs,longtable}
\hypersetup{hidelinks,pdfborder={0 0 0},pdftitle={Divisorial ideals and v-class semigroups of strictly closed domains},pdfauthor={Ryotaro Isobe},pdfkeywords={strict closure, Arf ring, divisorial ideal, v-class semigroup, Boole v-regularity}}

\theoremstyle{definition}

\theoremstyle{plain}

\theoremstyle{plain}
\newtheorem{thm}{Theorem}[section]

\newtheorem*{thm*}{Theorem}
\newtheorem*{cor*}{Corollary}
\newtheorem*{thma}{Theorem A}
\newtheorem*{thmb}{Theorem B}

\newtheorem{prop}[thm]{Proposition}

\newtheorem{lem}[thm]{Lemma}
\newtheorem{cor}[thm]{Corollary}

\newtheorem{claim}{Claim}
\newtheorem*{claim*}{Claim}

\theoremstyle{definition}
\newtheorem{dfn}[thm]{Definition}

\newtheorem{ex}[thm]{Example}

\newtheorem{rem}[thm]{Remark}

\theoremstyle{remark}

\numberwithin{equation}{thm}

\def\Min{\operatorname{Min}}

\def\Im{\operatorname{Im}}

\def\Ker{\operatorname{Ker}}

\def\Hom{\operatorname{Hom}}

\def\Max{\operatorname{Max}}
\def\Spec{\operatorname{Spec}}

\def\End{\mathrm{End}}

\def\m{\mathfrak m}

\def\H{\mathrm{H}}

\newcommand{\rmA}{\mathrm{A}}

\newcommand{\calS}{\mathcal{S}}

\newcommand{\fka}{\mathfrak{a}}

\newcommand{\fkc}{\mathfrak{c}}

\newcommand{\fkm}{\mathfrak{m}}

\newcommand{\fkp}{\mathfrak{p}}

\newcommand{\bsF}{\boldsymbol{F}}

\renewcommand{\bar}{\overline}

\newcommand{\benu}{\begin{enumerate}}

\newcommand{\enenu}{\end{enumerate}}

\newcommand{\mapright}[1]{%
\smash{\mathop{%
\hbox to 1cm{\rightarrowfill}}\limits^{#1}}}

\newcommand{\mapleft}[1]{%
\smash{\mathop{%
\hbox to 1cm{\leftarrowfill}}\limits_{#1}}}

\def\depth{\operatorname{depth}}

\def\Ass{\operatorname{Ass}}

\def\height{\mathrm{ht}}
\def\Spec{\operatorname{Spec}}

\def\B{{\mathcal B}}

\def\F{{\mathcal F}}

\DeclareMathOperator{\Inv}{Inv}
\DeclareMathOperator{\Prin}{Prin}
\DeclareMathOperator{\Cl}{Cl}
\DeclareMathOperator{\tr}{tr}

\title[Divisorial ideals and $v$-class semigroups]{Divisorial ideals and $v$-class semigroups
of strictly closed domains}
\author[R. Isobe]{Ryotaro Isobe}
\address{Faculty of Education, Chiba University,
1-33 Yayoi-cho, Inage-ku, Chiba-shi, Chiba-ken, 263-8522, Japan}
\email{ryotaro.isobe@faculty.gs.chiba-u.jp}
\date{}

\thanks{2020 {\em Mathematics Subject Classification.} 13A15, 13B22, 13C20.}
\thanks{{\em Key words and phrases.} Strict closure, Arf ring, divisorial ideal,
$v$-class semigroup, Clifford $v$-regular, Boole $v$-regular}
\thanks{The author was supported by JSPS KAKENHI Grant Number JP24K16910.}

\begin{document}
\setlength{\baselineskip}{15.2pt}

\begin{abstract}
We study divisorial ideals and the $v$-class semigroup of a Noetherian domain satisfying Serre's condition $(S_2)$.
We prove that strict closedness is equivalent to Clifford $v$-regularity and describe the structure of the associated $v$-class semigroup.
We use these results to characterize finite generation of the class semigroup and Boole $v$-regularity.
We also construct prime ideal decompositions of integrally closed divisorial ideals and clarify the relationship between these decompositions and local Boole $v$-regularity.
Finally, we study polynomial extensions and use them to construct
Boole $v$-regular and locally Boole $v$-regular domains in arbitrary dimension.
\end{abstract}

\maketitle

\section{Introduction}\label{intro}
In this paper, we study divisorial ideals of strictly closed domains and the structure of their $v$-class semigroups.
For a commutative ring $R$, the strict closure $R^*$ is the intermediate ring consisting of the elements $\alpha$ of its integral closure $\bar R$ in the total ring of fractions such that $\alpha\otimes1=1\otimes\alpha$ in $\bar R\otimes_R\bar R$,
and we say that $R$ is strictly closed if $R=R^*$.
This notion was introduced by Lipman \cite{L}. 
In the same paper, Lipman introduced the Arf property for Cohen-Macaulay semi-local rings whose localizations at all maximal ideals have dimension one, and characterized it in terms of the stability of integrally closed ideals (\cite[Theorem 2.2]{L}).
In this setting, the Arf property is equivalent to strict closedness (\cite[Proposition 4.5, Theorem 4.6]{L}, \cite[Theorem 4.4]{C}).
Moreover, a Noetherian domain satisfying Serre's condition $(S_2)$ is strictly closed if and only if its localizations at height-one prime ideals are Arf (\cite[Proposition 4.2, Theorem 4.4]{C}).
We use this local criterion to relate the theory of divisor class groups of integrally closed domains to the ideal theory of one-dimensional Arf rings.
For related work on Arf rings and strict closure, see also \cite{EGI,I1,I2,IK}.

Throughout this section, let $R$ be a Noetherian domain satisfying Serre's condition $(S_2)$, and let $K$ be its field of fractions.
For nonzero fractional ideals $X,Y$, set $X:Y=\{a\in K\mid aY\subseteq X\}$ and $X^{v_R}=R:(R:X)$.
We say that $X$ is \emph{divisorial} if $X=X^{v_R}$.
The isomorphism classes of nonzero divisorial fractional ideals form a commutative monoid under the product $[X][Y]=[(XY)^{v_R}]$.
This monoid is called the \emph{$v$-class semigroup} of $R$ and is denoted by $\calS^v(R)$.
Its group of units is called the \emph{$v$-class group} and is denoted by $\Cl^v(R)$.
It consists of the isomorphism classes of divisorial fractional ideals satisfying $(X(R:X))^{v_R}=R$, which are called \emph{$v$-invertible} fractional ideals.
In this setting, $\calS^v(R)=\Cl^v(R)$ if and only if $R$ is integrally closed.

A commutative semigroup $S$ is called \emph{Clifford} if $S$ is a disjoint union of subgroups.
It is called \emph{Boolean} if every element of $S$ is idempotent.
Every Boolean semigroup is Clifford.
A domain $R$ is said to be \emph{Clifford $v$-regular} (resp.\ \emph{Boole $v$-regular}) if $\calS^v(R)$ is Clifford (resp.\ Boolean).
Clifford and Boole regularity of class semigroups were studied by Kabbaj and Mimouni \cite{KM,KMN}, and their relationship with stability with respect to star operations was investigated by Gabelli and Picozza \cite{GP2,GP}.

Our first objective is to characterize strict closedness in terms of
the class semigroup and, in the strictly closed case, to describe
this semigroup as a disjoint union of groups.
Let $\B^v(R)$ denote the set of finite birational extensions that are divisorial as $R$-modules.
The first main result of this paper is as follows.

\begin{thma}[Theorem \ref{5.5}]
Suppose that $R$ is a Noetherian domain satisfying Serre's condition $(S_2)$. Then, the following conditions are equivalent.
\benu[$(1)$]
\item $R=R^*$.
\item $R$ is Clifford $v$-regular.
\item For every nonzero divisorial ideal $I\subseteq R$, $I$ is $v$-invertible over $A$, where $A=I:I$.
\enenu
When these conditions hold, setting $\calS_A^v(R)=\{[I]\in\calS^v(R)\mid I:I=A\}$ for each $A\in\B^v(R)$, we have
\[
 \calS^v(R)=\bigsqcup_{A\in\B^v(R)}\calS_A^v(R) \quad \text{and}
 \quad  \calS_A^v(R)\cong\Cl^v(A),
\]
where the latter is a natural isomorphism of abelian groups, and the identity of the corresponding constituent group is $[A]$.
\end{thma}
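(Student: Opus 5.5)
The plan is to reduce everything to height-one localizations and use the local criterion recalled in the introduction: $R=R^*$ if and only if $R_\fkp$ is Arf for every height-one prime $\fkp$ (\cite[Proposition 4.2, Theorem 4.4]{C}). Since $R$ is Noetherian and $(S_2)$, a divisorial fractional ideal $X$ satisfies $X=\bigcap_{\height\fkp=1}X_\fkp$. Hence $v$-operations and equalities of divisorial ideals can be checked locally at height-one primes, where $R_\fkp$ is one-dimensional Cohen--Macaulay and divisorial ideals are simply the fractional ideals. Write $A=I:I$. Then $A$ is a finite birational extension and is divisorial as an $R$-module, because $I:I=\bigcap_\fkp I_\fkp:I_\fkp$. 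So $A\in\B^v(R)$, and $A$ again satisfies $(S_2)$.

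For (1)$\Rightarrow$(3), I will use Lipman's characterization \cite[Theorem 2.2]{L}. In a one-dimensional Arf ring every integrally closed ideal is stable, and more generally every fractional ideal $I$ becomes invertible, in fact principal, over its endomorphism ring $I:I$. To see this, note that over-rings of Arf rings are Arf, and that in an Arf ring $I^2=xI$ for a suitable $x$. Localizing, $I_\fkp$ is invertible over $A_\fkp=I_\fkp:I_\fkp$ for every height-one $\fkp$. Therefore $(I(A:I))^{v}=A$: both sides are divisorial and they agree at every height-one localization. This is exactly $v$-invertibility over $A$. (One has to check that the $v$-operation of $A$ and that of $R$ agree on $A$-modules; I expect this to follow from divisoriality of $A$ and $(S_2)$.)

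For (3)$\Rightarrow$(2), I will build the decomposition. Let $\calS_A^v(R)$ consist of the classes with $I:I=A$; these sets are disjoint and cover $\calS^v(R)$. The map $\calS_A^v(R)\to\Cl^v(A)$, $[I]\mapsto[I]$, is well defined by (3) and bijective, because any $v$-invertible divisorial $A$-ideal $J$ has $J:J=A$ and is $R$-divisorial. It is multiplicative, since $(IJ)^{v_R}=(IJ)^{v_A}$ and $(IJ):(IJ)=A$ for $v$-invertible $I,J$. So each $\calS_A^v(R)$ is a subgroup of $\calS^v(R)$ with identity $[A]$, and $\calS^v(R)$ is Clifford. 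The same argument yields the displayed structure once (1)–(3) are known to be equivalent.

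For (2)$\Rightarrow$(1), which I expect to be the main obstacle, I will argue as follows. Clifford means every $[I]$ lies in a subgroup. In particular there is an idempotent $[E]$ with $[I][E]=[I]$ and $[I][J]=[E]$ for some $J$. From $(E^2)^v\cong E$ one shows $(E^2)^v=xE$, and this forces $E$ to be isomorphic to a ring $B=E:E$. Then $I$ is $v$-invertible over $B$, and one checks $B=I:I$. Localizing at a height-one $\fkp$, every fractional ideal of $R_\fkp$ is then invertible over its endomorphism ring. This requires showing that the Clifford property descends to $\calS(R_\fkp)$; I would try to lift any $R_\fkp$-ideal to a divisorial $R$-ideal by intersecting with $R$-ideals that are trivial at the other height-one primes. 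Finally, I will apply Lipman's theorem: a one-dimensional ring in which every integrally closed ideal $I$ is stable, i.e.\ invertible over $I:I$, is Arf. This gives $R_\fkp$ Arf for all $\fkp$, and hence $R=R^*$ by the local criterion.
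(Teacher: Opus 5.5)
\textbf{There is a genuine gap in the local step.} Your architecture is the paper's: reduce to height-one primes, use the Arf property there, principalize over $I:I$ locally, globalize by Corollary~\ref{2.3}, and for $(2)\Rightarrow(1)$ lift ideals of $R_\fkp$ to divisorial ideals of $R$. But the local input you feed into it is wrong.

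You assert that at a height-one prime ``divisorial ideals are simply the fractional ideals''. This holds only when $R_\fkp$ is Gorenstein. Take the Arf ring $k[[t^3,t^4,t^5]]$ and $I=(t^3,t^4)$. Then $I^{v}=t^3k[[t]]\neq I$ and $I:I=k[[t^3,t^4,t^5]]$, yet $I$ is not principal. So your claim that every fractional ideal of an Arf ring is principal over its endomorphism ring is false. So is the justification ``over-rings of Arf rings are Arf''; only divisorial ones are (Corollary~\ref{3.6}).

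What you actually need is Theorem~\ref{3.4}: a one-dimensional Cohen--Macaulay semi-local ring is Arf if and only if every \emph{divisorial} ideal satisfies $I^2=aI$. Both directions are substantive.
\begin{itemize}
\item For $(1)\Rightarrow(3)$, you need that every divisorial ideal of an Arf ring is isomorphic to an integrally closed trace ideal (Theorem~\ref{3.3}). Theorem~\ref{3.2} alone covers only integrally closed ideals.
\item For $(2)\Rightarrow(1)$, Clifford $v$-regularity only gives information about divisorial ideals of $R_\fkp$, since lifting a non-divisorial ideal returns its $v$-closure. So before invoking Lipman's criterion you must prove that every integrally closed ideal of $R_\fkp$ is divisorial. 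The paper does this in the proof of Theorem~\ref{3.4}. It writes $I=\alpha\bar R\cap R$ with $IB=\alpha B$ for some $B\in\B(R)$, and uses $B^{v_R}=\fkc:\fkc\subseteq\bar R$ with $\fkc=R:B$.
\end{itemize}

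\textbf{A related false expectation.} The operations $v_A$ and $v_R$ do not agree on $A$-modules merely because $A$ is divisorial and $R$ satisfies $(S_2)$. Take the Gorenstein ring $R=k[[t^3,t^4]]$, $A=k[[t^3,t^4,t^5]]\in\B^v(R)$, and the $A$-ideal $X=(t^3,t^4)A$. Then $X^{v_R}=X$ but $X^{v_A}=t^3k[[t]]$. The general agreement in Proposition~\ref{5.10} uses $R=R^*$, via $R:A\cong A$ locally.

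In $(3)\Rightarrow(2)$, where $R=R^*$ is not yet known, you only get agreement on products of $v_A$-invertible ideals, from their local principality at height-one primes. That is essentially Proposition~\ref{5.4}. The paper's route through Lemma~\ref{5.2} and Proposition~\ref{5.4} needs only $I=(I^2(A:I))^{v_R}$, so it avoids building the group decomposition before Clifford regularity is established.
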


The relationship between Clifford $t$-regularity and invertibility
over endomorphism rings has been studied in \cite{KMN,GP2,GP}.
In the present $(S_2)$ setting, Theorem A connects this theory with
strict closedness and describes the constituent groups in terms of
the $v$-class groups of the rings $A\in\B^v(R)$.
Each such $A$ is again a strictly closed Noetherian domain satisfying $(S_2)$
(Proposition~\ref{2.10} and Theorem~\ref{3.12}).
We use this description to study finite generation and the relationship
between $v$-multiplication and ordinary ideal multiplication.
More precisely, under the assumption $R=R^*$, the monoid $\calS^v(R)$ is finitely generated if and only if $\bar R$ is a finitely generated $R$-module and $\Cl^v(R)$ is a finitely generated abelian group (Theorem~\ref{5.14}).
Also, we prove that $R$ is Boole $v$-regular if and only if $R=R^*$ and $\Cl^v(R)=0$.
These conditions are equivalent to the requirement that, for every nonzero divisorial ideal $I\subseteq R$, there exists $a\in I$ such that $I^2=aI$ (Theorem~\ref{5.9}).

Our second objective is to construct a prime ideal decomposition of integrally closed divisorial ideals and to determine when this decomposition holds under ordinary ideal multiplication.
In \cite{I1}, the author decomposed integrally closed ideals of one-dimensional Arf rings as ordinary products of prime ideals appearing in an ascending chain of rings.
Here we extend this construction to arbitrary dimension by using $v$-multiplication.
More precisely, assuming $R=R^*$, we associate a finite ascending chain of finite birational extensions with each nonzero integrally closed divisorial ideal $I\subseteq R$.
If $P(I)$ denotes the ordinary product of certain height-one prime ideals of these rings, then the equation 
 $I=(P(I))^{v_R}$ holds
(Proposition~\ref{6.3}).
The construction of $P(I)$ is given in Section~\ref{sec:decomposition}.
The following theorem gives conditions under which the $v$-closure can be omitted from this decomposition.

\begin{thmb}[Theorem~\ref{6.5}]
Suppose that $R$ is a Noetherian domain satisfying $(S_2)$, and $R=R^*$. Then, the following conditions are equivalent.
\benu[$(1)$]
\item For every $P\in\Spec R$, $R_P$ is Boole $v$-regular.
\item The ordinary product $XY$ of any two nonzero divisorial fractional ideals $X,Y$ is divisorial.
\item Every divisorial fractional ideal that is $v$-invertible over $R$ is invertible over $R$.
\enenu
When these conditions hold, the construction above gives $I=P(I)$ for every nonzero integrally closed divisorial ideal $I\subseteq R$.
In addition, if $\bar R$ is a finitely generated $R$-module, then these conditions are also equivalent to the requirement that $I=P(I)$ for every nonzero integrally closed divisorial ideal $I\subseteq R$.
\end{thmb}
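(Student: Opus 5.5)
We prove Theorem B by reducing everything to Theorem A and Theorem~\ref{5.9}, applied to the localizations $R_P$. Each $R_P$ is again a strictly closed Noetherian domain satisfying $(S_2)$, since strict closure and $(S_2)$ localize. Divisoriality and $v$-closure also localize: for a nonzero fractional ideal $X$ of the Noetherian domain $R$, we have $(X^{v_R})_P=(X_P)^{v_{R_P}}$, because $R:X$ is finitely generated. Hence each of (2) and (3) can be checked locally. A fractional ideal $Z$ is divisorial iff $Z_P$ is divisorial for all $P$, and invertibility is local. We also use that divisorial fractional ideals of $R_P$ are localizations of divisorial fractional ideals of $R$, obtained by taking $v$-closures of contractions.

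\emph{(1)$\Rightarrow$(2).} Let $X,Y$ be divisorial and fix $P$. By Theorem~\ref{5.9}, applied to $R_P$ together with the description $\calS^v(R_P)=\bigsqcup_A \calS_A^v(R_P)$ with trivial groups, every divisorial ideal $I$ of $R_P$ satisfies $I\cong A:=I:I$. Thus $X_P=xA$ and $Y_P=yB$ with $A,B\in\B^v(R_P)$. Then $X_PY_P=xy\,AB$, and $AB$ is the ring generated by $A$ and $B$. Boolean regularity gives $[A][B]=[(AB)^v]$ idempotent, and we must show $AB$ is already divisorial.

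For this we argue that $(AB)^v$ is a ring $C$ which is divisorial. By Theorem A its class lies in $\calS_C^v$, and it is the $v$-closure of $AB$. We then need $AB=C$. Here we use the one-dimensional Arf theory at height-one primes, since $AB$ and $C$ agree there, together with $(S_2)$-type reflexivity of finite birational extensions. This is the delicate point, and we expect it to be the \textbf{main obstacle}. As a fallback, we plan to prove (1)$\Rightarrow$(3)$\Rightarrow$(2) instead.

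\emph{(1)$\Rightarrow$(3).} If $X$ is $v$-invertible over $R$, then $X_P$ is $v$-invertible over $R_P$, so $X_P:X_P=R_P$. Since $\Cl^v(R_P)=0$, we get $X_P\cong R_P$, hence $X_P$ is principal. Thus $X$ is locally principal, hence invertible.

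\emph{(3)$\Rightarrow$(2).} By Theorem A, $X\cong$ an ideal $v$-invertible over $A=X:X$, and similarly $Y$ over $B=Y:Y$. We reduce to showing that $C:=(AB)^{v}$ satisfies $C=AB$ and that $v$-invertible ideals over $C$ are invertible over $C$. This requires transferring condition (3) from $R$ to $A\in\B^v(R)$. We do this using the conductor $R:A$ and the gluing of $v$-invertible $A$-ideals with $R$-ideals, in the style of a Milnor square.

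\emph{(2)$\Rightarrow$(1).} By Theorem~\ref{5.9}, applied to $R_P$ (whose divisorial ideals satisfy (2) locally), it suffices to find $a\in I$ with $I^2=aI$ for each divisorial $I$ of $R_P$. By (2), $I^2$ is divisorial. Since $R_P$ is Clifford, $[I]$ lies in a group $\calS_A^v(R_P)\cong\Cl^v(A)$, and $I^n$ is divisorial for all $n$. Choosing a reduction, Noetherian local arguments then yield $I^2=aI$ over $A$; that is, $I$ is principal over $A$, via (3) for $A$.

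\emph{The final statements.} Under (1)--(3), Proposition~\ref{6.3} gives $I=(P(I))^{v_R}$. Since $P(I)$ is an ordinary product of divisorial ideals (height-one primes in rings of $\B^v$, viewed as $R$-ideals), repeated use of (2) shows that $P(I)$ is divisorial, so $I=P(I)$. For the converse when $\bar R$ is finite, we show (3). Take $X$ $v$-invertible over $R$; after scaling we may assume $X\subseteq R$ is integrally closed, using finite $\bar R$ to replace $X$ by a suitable power times a principal ideal. For such $X$ the chain of rings is trivial, so $P(X)$ is a product of height-one primes of $R$ that are invertible, since $R_\fkp$ is a DVR along the $v$-invertible part. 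The equality $X=P(X)$ then forces $X$ to be invertible.
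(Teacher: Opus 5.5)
\textbf{Gap 1: the cycle of equivalences never closes.} Your (1)$\Rightarrow$(3) and (2)$\Rightarrow$(5) are fine. Your (2)$\Rightarrow$(1) can be made rigorous without any reduction argument: (2) localizes to $R_P$, and since $I$ and $A:I$ are divisorial, Theorem~\ref{5.5}(3) and Proposition~\ref{5.10} give $I(A:I)=(I(A:I))^{v_A}=A$. So $I$ is principal over the semilocal ring $A$.

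Nothing brings you back to (2), however. Both of your routes, (1)$\Rightarrow$(2) and (3)$\Rightarrow$(2), reduce to showing that $AB$ is divisorial for $A,B\in\B^v(R_P)$, and you never prove this. The suggested argument is circular: agreement of $AB$ and $(AB)^{v}$ at height-one primes gives equality via Corollary~\ref{2.3} only if $AB$ is already known to be divisorial, and a ring in $\B(R)$ has no $(S_2)$-type reflexivity for free. The Milnor-square gluing does not address this either. Transferring (3) to $A$ is easy by Lemma~\ref{5.11a}(1), but $AB$ remains.

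The missing idea is Proposition~\ref{5.15}. In the Boole $v$-regular ring $R_P$:
\begin{enumerate}
\item Replace the base by $A\cap B$, which is again strictly closed with trivial $v$-class group (Theorem~\ref{3.12}, Lemma~\ref{5.11a}(1)), so that $R=A\cap B$.
\item Then $R:A$ and $R:B$ have no common associated prime: a common one $Q$ would give $R\subseteq R:Q\subseteq A\cap B=R$, contradicting $Q=R:(R:Q)$.
\item Lemma~\ref{6.4}, which uses $\Cl^v(R)=0$, gives $(R:A)(R:B)=(R:A)\cap(R:B)$, a divisorial ideal.
\item Boole $v$-regularity gives $A\cong R:A$ and $B\cong R:B$, so $AB$ is divisorial.
\end{enumerate}
Localizing at all primes then yields (2).

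\textbf{Gap 2: your converse (5)$\Rightarrow$(3) for finite $\bar R$ fails.} The primes occurring in $P(X)$ lie in $\H_1^{\rm reg}(R)$. But $R_\fkp$ being a DVR only makes $\fkp$ $v$-invertible, not invertible, and that is exactly the point at issue. Applying (5) to $X$ itself can never work. For $X=\fkp\in\H_1^{\rm reg}(R)$ one has $\fkp:\fkp=R$ and $I_1=\fkp:\fkp=R$, so $P(\fkp)=\fkp$ holds automatically. This happens, for instance, for the non-invertible prime $(x,z)$ of the normal ring $k[[x,y,z]]/(xy-z^2)$.

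The paper instead applies (5) to a principal ideal $aR$ with $a\in\fkp\setminus\bigcup_{Q\in\Sigma}Q$, where $\Sigma=\H_1(R)\setminus\H_1^{\rm reg}(R)$ is finite because $\bar R$ is finite. This ideal is integrally closed with $aR:aR=R$, so its chain is trivial. Hence $aR=P(aR)$ is an ordinary product of primes of $R$ with $\fkp$ as a factor. A factor of an invertible ideal is invertible, which gives $\H_1^{\rm reg}(R)\subseteq\Inv(R)$, and Proposition~\ref{5.8} then yields (3). Your passage to an integrally closed representative should also go through Lemma~\ref{5.7}, not through powers.
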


When $R$ is integrally closed, Theorem B corresponds to the characterization of $\pi$-domains among Krull domains (\cite[Theorem 1]{A}).
In this case, when the equivalent conditions of the theorem hold, the ordinary product decomposition above involves height-one prime ideals of $R$ itself and agrees with the prime ideal decomposition in a $\pi$-domain.


The rest of this paper is organized as follows.
In Section~\ref{sec:arf}, we introduce the notation and recall results on the Arf property and strict closedness.
In Section~\ref{sec2}, we collect basic properties of divisorial ideals. In Section~\ref{sec:overrings}, we show that $(S_2)$ and strict closedness pass to the finite birational extensions under consideration (Proposition \ref{2.10} and Theorem \ref{3.12}).
In Section~\ref{sec:semigroups}, we prove Theorem A (Theorem \ref{5.5}) and study finite generation of the class semigroup and Boole $v$-regularity.
In Section~\ref{sec:decomposition}, we construct decompositions of integrally closed divisorial ideals and prove Theorem B (Theorem \ref{6.5}).
In Section~\ref{sec:polynomial}, we study polynomial extensions.
Clifford $v$-regularity is equivalent for $R$ and $R[X]$, whereas $R[X]$ is Boole $v$-regular if and only if $R$ is a UFD.
Furthermore, $R[X]$ is locally Boole $v$-regular if and only if $R$ is locally a UFD (Theorems~\ref{7.1}, \ref{7.4}, \ref{7.5}).
On the other hand, suitable localizations of polynomial extensions yield, for every positive integer $d$, a non-integrally closed Boole $v$-regular domain of dimension $d$.
We also construct non-integrally closed domains of dimension $d$ that are locally, but not globally, Boole $v$-regular (Examples~\ref{ex:boole-anydim}, \ref{ex:locally-boole-anydim}).

\section{The Arf property and strict closedness}\label{sec:arf}
In this section, we recall results on the Arf property and strict closedness.
For a Noetherian ring $R$, let $K=Q(R)$ be its total ring of fractions, and let $\bar R$ be its integral closure in $K$.
We denote by $\bsF(R)$ the set of finitely generated $R$-submodules of $K$ containing a non-zerodivisor, and call these modules fractional ideals.
For a domain, this is the set of nonzero fractional ideals.
Set
\[
 \B(R)=\{A\in\bsF(R)\mid R\subseteq A\subseteq K,\ A\text{ is a subring of $K$}\},
 \quad \F(R)=\{I\in\bsF(R)\mid I\subseteq R\}.
\]
The elements of $\B(R)$ are finite birational extensions.
For $X,Y\in\bsF(R)$, set $X:Y=\{a\in K\mid aY\subseteq X\}$ and $X^{v_R}=R:(R:X)$.
We say that $X$ is divisorial over $R$ if $X=X^{v_R}$.
The sets of divisorial elements of $\bsF(R),\B(R),\F(R)$ are denoted by $\bsF^v(R),\B^v(R),\F^v(R)$, respectively.
Also, $\bar I$ denotes the integral closure of an ideal $I$, and we set
\[
 \H_1(R)=\{P\in\Spec R\mid \height_R P=1\},\quad
 \H_1^{\rm reg}(R)=\{P\in\H_1(R)\mid R_P\text{ is a DVR}\}.
\]

Until Theorem~\ref{3.8}, unless otherwise specified, we assume that $R$ is a one-dimensional Cohen-Macaulay semi-local ring such that $\dim R_M=1$ for every $M\in\Max R$.
We begin with the definition of an Arf ring.

\begin{dfn}[\cite{L}]\label{3.1}
We call $R$ an {\it Arf ring} if the following conditions hold.

\begin{enumerate}[$(1)$]
\item
For every integrally closed ideal $I\in \F(R)$, there exists $a\in I$ such that $I=\bar{aR}$; that is, $I^{n+1}=aI^n$ for some $n\ge 0$.
\item
For $x, y, z \in R$, if $x$ is a non-zerodivisor of $R$ and $\frac{y}{x}, \frac{z}{x}\in \overline{R}$, then $\frac{yz}{x}\in R$.
\end{enumerate}

\end{dfn}

The following characterization of the Arf property is known.

\begin{thm}[{\cite[Theorem 2.2]{L}}]\label{3.2}
The following conditions are equivalent.
\benu[$(1)$]
\item
$R$ is Arf.
\item
For every integrally closed ideal $I\in \F(R)$, there exists $a\in I$ such that $I^2=aI$.

\enenu
\end{thm}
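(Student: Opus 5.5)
Both directions rest on two standard facts about integrally closed ideals in a one-dimensional Cohen--Macaulay semi-local ring. First, for an integrally closed ideal $I\in\F(R)$ containing a non-zerodivisor there is a reduction $a\in I$ with $I^{n+1}=aI^n$ for some $n$; this may need a reduction to infinite residue fields, and Definition~\ref{3.1}(1) already grants it in the Arf case. Second, for such $a$ we have $I=\bar{aR}=a\bar R\cap R$. The key observation is then that $I^2=aI$ says exactly that $I/a$ is closed under multiplication. Indeed, $I^2=aI$ holds iff $(I/a)^2\subseteq I/a$. In that case $B:=I/a$ is a ring, and since $B\subseteq\bar R$, it equals the stable ring $I:I$.

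$(2)\Rightarrow(1)$. Condition (1) of Definition~\ref{3.1} is immediate, since $I^2=aI$ gives $I^{1+1}=aI^1$. For condition (2), take $x$ a non-zerodivisor and $y/x,\,z/x\in\bar R$, and put $I=\bar{xR}=x\bar R\cap R$. This $I$ is integrally closed, and $y,z\in I$. By hypothesis $I^2=bI$ for some $b\in I$, so $B=I/b$ is a ring containing $1$, hence $b\in I$ and $I=bB$. The plan is to show that $xB=I$ as well. We have $xR\subseteq I$, so $xB\subseteq I$. Also $I$ and $xR$ have the same integral closure, so $xR$ is a reduction of $I$: $I^{m+1}=xI^m$. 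Since $I^m=b^mB$, this gives $b^{m+1}B=xb^mB$, and cancelling the non-zerodivisor $b^m$ yields $bB=xB$. Thus $I=xB$, so $I^2=xI$. Then $yz\in I^2=xI\subseteq xR$, which gives $yz/x\in R$.

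$(1)\Rightarrow(2)$. Let $I$ be integrally closed and let $a\in I$ be as in Definition~\ref{3.1}(1), so that $I=\bar{aR}=a\bar R\cap R$. Take $y,z\in I$. Then $y/a,\,z/a\in\bar R$, and condition (2) of Definition~\ref{3.1} gives $yz/a\in R$. Moreover $yz/a=a\cdot(y/a)(z/a)\in a\bar R$. Hence $yz/a\in a\bar R\cap R=I$, so $yz\in aI$. This proves $I^2\subseteq aI$, and the reverse inclusion is trivial.

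The main point needing care is the identity $\bar{aR}=a\bar R\cap R$ for a non-zerodivisor $a$ in a Noetherian ring, together with $\bar{xR}=\bar I$ when $xR$ is a reduction of $I$. Both are standard: integral closure of a principal ideal is controlled by $\bar R$, and integral closure is preserved under reductions.
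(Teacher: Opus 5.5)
The paper gives no proof of this statement; it only quotes Lipman's Theorem~2.2. Your argument is correct and self-contained, and it follows the standard route. For $(1)\Rightarrow(2)$ you use the direct computation $yz/a\in a\overline{R}\cap R=\overline{aR}=I$. For $(2)\Rightarrow(1)$ you apply stability to $I=x\overline{R}\cap R$, and then use the fact that a stable ideal satisfies $I^2=xI$ for \emph{every} principal reduction $xR$. You prove that fact inline via $b^{m+1}B=xb^mB$.

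You can also obtain $bB=xB$ without the reduction step. Since $x\in I=bB$, we have $x/b\in B$. Since $b\in x\overline{R}\cap R$, the element $b/x$ lies in $\overline{R}$, so it is integral over $B$. An element integral over $B$ whose inverse lies in $B$ already lies in $B$, so $b/x\in B$.

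A few small repairs:
\begin{enumerate}
\item You divide by $a$ and by $b$, so say why they are non-zerodivisors. If $c\in I$ is a non-zerodivisor and $I^{n+1}=aI^n$, then $c^{n+1}\in aR$, so $a$ is a non-zerodivisor. The same argument with $n=1$ and $c=x$ handles $b$.
\item In $(2)\Rightarrow(1)$, the phrase ``$B$ is a ring containing $1$, hence $b\in I$'' is backwards. The fact $b\in I$ is given, and that is why $1\in B$.
\item Your ``first standard fact'' is false in general when some residue field is finite. This is the claim that every integrally closed $I\in\F(R)$ has a principal reduction. For example, the maximal ideal of $\mathbb{F}_2[[x,y]]/(xy(x+y))$ has none, which is exactly why Definition~\ref{3.1}(1) is imposed as an axiom. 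You never use this claim in either direction, so delete it rather than hedge it.
\end{enumerate}
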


For a finitely generated module $M$, set
\[
\tr_R(M)=\sum_{f\in\Hom_R(M,R)}\operatorname{Im}f.
\]
An ideal $I$ is called a {\it trace ideal} if $I=\tr_R(M)$ for some finitely generated module $M$.
For a fractional ideal $X$, we have $\tr_R(X)=X(R:X)$.
An ideal $I\in\F(R)$ is a trace ideal if and only if $I=\tr_R(I)$, which is equivalent to $I:I=R:I$ (\cite[Lemma 2.1, Corollary 2.2]{IK}).

\begin{thm}[{\cite[Theorem 3.5]{IK}}]\label{3.3}
Suppose that $R$ is Arf. For $I\in \F(R)$, the following conditions are equivalent.
\benu[$(1)$]
\item
$I\in \F^v(R)$.
\item
There exists a trace ideal $J\in \F(R)$ such that $I\cong J$.
\item
There exists an integrally closed ideal $J\in \F(R)$ such that $I\cong J$.
\enenu

When this is the case, $I\cong \tr_R(I) \cong R:I$, and $\tr_R(I)$ is integrally closed.
\end{thm}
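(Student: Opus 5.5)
The plan is to prove $(3)\Rightarrow(2)\Rightarrow(1)\Rightarrow(3)$, and to obtain the final assertions along the way. Throughout I will use the basic consequence of the Arf property from Theorem~\ref{3.2}. Let $J\in\F(R)$ be integrally closed, and choose $a\in J$ with $J^2=aJ$. Then $a$ is a non-zerodivisor, and $J/a\subseteq J:J$. Conversely $a(J:J)\subseteq J$, so
\[
J=a(J:J), \qquad B:=J:J\in\B(R).
\]
In words, every integrally closed ideal is isomorphic to a finite birational extension $B$. Moreover $J$ is the principal ideal $aB$ of $B$. I will also use freely that, because $R$ is one-dimensional Cohen--Macaulay, colons of fractional ideals behave well. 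In particular $X\mapsto R:X$ reverses inclusions and satisfies $R:(R:(R:X))=R:X$.

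\emph{$(3)\Rightarrow(2)$.} Assume $I\cong J$ with $J$ integrally closed, and write $J=aB$ as above. I want to show that $J$ itself is a trace ideal, that is, $R:J=J:J$. One inclusion, $J:J\subseteq R:J$, is trivial because $J\subseteq R$. For the reverse, let $x\in R:J$ and $y\in J$. It suffices to show $xy\in J$. I will use condition $(2)$ of Definition~\ref{3.1}. All elements $z/a$ with $z\in J$ lie in $B\subseteq\bar R$. So for $y,z\in J$ we get $yz/a\in R$, which gives $J^2/a=J\subseteq R$ in a controlled way. I then try to combine this with $xJ\subseteq R$ and the integral closedness of $J$ to conclude that $xy$ is integral over $J$, so that $xy\in\bar J=J$. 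The natural test is to check that $xy$ satisfies an equation of integral dependence over $J$ using the element $a$. This is the step I expect to be the main obstacle. If the direct computation fails, I would instead pass to the conductor $R:B$ and show that it coincides with $aB$. For this I would use Lipman's result that $B$ is again Arf and compare the integral closures of the conductor.

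\emph{$(2)\Rightarrow(1)$.} Divisoriality is invariant under isomorphism, so it suffices to treat a trace ideal $J$. Then $R:J=J:J=:B$ is a ring, so $J^{v_R}=R:B$. We have $J\subseteq R:B$ because $JB=J\subseteq R$. For the reverse inclusion, take $x\in R:B$. Then $x\in R$, and $x(R:J)=xB\subseteq R$. Hence $x\in R:(R:J)$, and we must show $x\in J$. Here I would again use that trace ideals are integrally closed; that claim is established in the next step. By the first paragraph this gives $J=bC$ with $C=J:J=B$. Then
\[
R:B=R:(b^{-1}J)=b(R:J)=bB=J.
\]

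\emph{$(1)\Rightarrow(3)$ and the final statement.} Let $I$ be divisorial and put $T=\tr_R(I)=I(R:I)$. The key claim is that $T$ is integrally closed. To see this, let $\bar T=cC$ with $C=\bar T:\bar T$. From $T\subseteq\bar T\subseteq R$ we get
\[
C\subseteq R:\bar T\subseteq R:T=T:T.
\]
Since $T:T$ is integral over $R$, all these rings sit inside $\bar R$. Applying the Arf condition as in the first step should force $\bar T\subseteq T$. For the isomorphisms, $R:I$ is divisorial, so it can be written as $R:I\cong$ the ring $B=T:T=R:T$, up to the principal multiplier coming from $T=cB$. This gives $R:I\cong B\cong T$. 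Dualizing, and using $I=R:(R:I)$, yields $I\cong R:B\cong T$. This proves $(3)$ with $J=T$, and at the same time it gives $I\cong\tr_R(I)\cong R:I$ with $\tr_R(I)$ integrally closed.
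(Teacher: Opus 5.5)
\textbf{Gap 1: the target of your step $(3)\Rightarrow(2)$ is false.} An integrally closed ideal need not be a trace ideal. In a DVR $V$, which is Arf, $J=tV$ is integrally closed, but $V:J=t^{-1}V\neq V=J:J$. Similarly, in $R=k[[t^2,t^3]]$ the ideal $J=t^4k[[t]]$ is integrally closed, but $R:J=t^{-2}k[[t]]\neq k[[t]]=J:J$. Your fallback, $R:B=aB$ for $B=J:J$, fails in the same examples: there $R:B=V\neq tV$, respectively $R:B=t^2k[[t]]\neq t^4k[[t]]$. What the implication actually needs is only an isomorphism $R:B\cong B$. Then $J\cong B\cong R:B$, and $R:B$ is always a trace ideal. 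But that isomorphism is essentially the whole content of the theorem, and nothing in your sketch produces it. It is easier to avoid this direction. $(3)\Rightarrow(1)$ holds under the standing one-dimensional assumptions alone, by the argument in the proof of $(2)\Rightarrow(1)$ of Theorem~\ref{3.4} ($J=\alpha\bar R\cap R$ and $A^{v_R}\subseteq\bar R$). Then $(1)\Rightarrow(2)$ follows once you know $I\cong\tr_R(I)$.

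\textbf{Gap 2: the decisive claim is never established.} Your proof rests on the claim that trace ideals are integrally closed, for arbitrary trace ideals and not only for $T=\tr_R(I)$ with $I$ divisorial; otherwise your $(2)\Rightarrow(1)$ would be circular. The phrase ``applying the Arf condition \dots should force $\bar T\subseteq T$'' is not an argument. The inclusions you record give $CT\subseteq T$ and $T^2\subseteq cT$, and you do not explain how they yield $c\in T$.

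One way to supply the claim is induction along the Arf sequence. After localizing, let $R$ be local and not a DVR, and write $\m=xR_1$ with $R_1=\m:\m=R:\m$, which is Arf. A trace ideal $T\subseteq\m$ satisfies $T:T=R:T\supseteq R:\m=R_1$. So $T$ is an $R_1$-module and
$R:T=(R:R_1):T=\m:T=x(R_1:T)$.
Hence $x^{-1}T$ is a trace ideal of $R_1$. Integral closedness then returns to $T$ because $T\bar R\cap R\subseteq x\bar R\cap R=\m=xR_1$.

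\textbf{Gap 3: the passage to $R:I\cong B$ is unjustified.} Your step ``up to the principal multiplier'' needs an argument. With $T=cB$ and $B=R:T=I^{v_R}:I=I:I$, you get $I\cdot c^{-1}(R:I)=B$. So $I$ is invertible over the semi-local ring $B$, hence $I=bB$. Only then do $I\cong B\cong T$ and $R:I=b^{-1}(R:B)=b^{-1}T\cong T$ follow, using $R:B=c(R:T)=cB=T$.
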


By the preceding theorem, we obtain the following.

\begin{thm}\label{3.4}
The following conditions are equivalent.
\benu[$(1)$]
\item
$R$ is Arf.
\item
For every $I\in \F^v(R)$, there exists $a\in I$ such that $I^2=aI$.
\item
For every $I\in \F^v(R)$, we have $I\cong \tr_R(I)$.
\enenu

\end{thm}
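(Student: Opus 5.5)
We prove the cycle $(1)\Rightarrow(3)\Rightarrow(2)\Rightarrow(1)$, working throughout in the one-dimensional Cohen--Macaulay semi-local setting fixed before Definition~\ref{3.1}.

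\emph{$(1)\Rightarrow(3)$.} This is immediate from the last assertion of Theorem~\ref{3.3}. If $R$ is Arf and $I\in\F^v(R)$, then $I\cong\tr_R(I)$.

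\emph{$(3)\Rightarrow(2)$.} Let $I\in\F^v(R)$ and set $J=\tr_R(I)$. By $(3)$ there is a unit $u\in K$ with $I=uJ$.
\benu[(a)]
\item Since $J$ is a trace ideal, $J:J=R:J$ by \cite[Lemma 2.1, Corollary 2.2]{IK}.
\item By Theorem~\ref{3.2} it suffices to know $J^2=bJ$ for some $b\in J$. We try to get this from the structure of the ring $B=J:J$. Because $R:J=B$, we have $J=R:B$, provided $J$ is divisorial. Divisoriality holds because $J\cong I$ and $I$ is divisorial, and isomorphic fractional ideals are simultaneously divisorial.
\item Then $J\cdot B=J$, and we want $J\cong B$ as $B$-modules, i.e.\ $J=bB$.
\enenu
The cleaner route is to apply $(3)$ to $J^2$ or to $R:I$. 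Instead, I would show $(3)\Rightarrow(1)$ directly, using that every integrally closed ideal of $\F(R)$ is divisorial. This uses the fact that $\bar I$ is an ideal of the ring $\bar R$, hence $\bar I=\bar I\bar R\cap R$, which is a conductor-type intersection. I am unsure whether that holds without Arf, which is why the order of implications is chosen to avoid it where possible.

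The safer approach is $(1)\Rightarrow(2)$ and $(2)\Rightarrow(1)$, with $(3)$ linked to $(2)$.
\benu[(a)]
\item \emph{$(1)\Rightarrow(2)$.} By Theorem~\ref{3.3}, $I\cong J$ with $J$ integrally closed, so $J^2=bJ$ for some $b\in J$ by Theorem~\ref{3.2}. Writing $I=uJ$ gives $I^2=u^2J^2=(ub)\,uJ=(ub)I$ with $a=ub\in I$.
\item \emph{$(2)\Rightarrow(1)$.} Let $I$ be integrally closed. If $I$ is divisorial, then $(2)$ gives $I^2=aI$, and Theorem~\ref{3.2} yields Arf. Divisoriality of integrally closed ideals in the one-dimensional CM setting is therefore the main obstacle.
\enenu

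To handle the obstacle, I would replace $I$ by $\bar{I}=I\bar R\cap R$ (valid for integrally closed ideals when $\bar R$ is a finite product of PIDs, i.e.\ semilocal Dedekind). Then I would show $R:(R:I)\subseteq I\bar R$. For this, use that $R:I\supseteq \bar R:I\bar R$ and that $I\bar R$ is divisorial over $\bar R$. Since $\bar R$ might not be finite over $R$, I would instead pass to the ring $A=I:I$: by $(2)$, $I^2=aI$ gives $a^{-1}I=A$, so $I\cong A$. Then $A$ is an integrally closed-type blowup, and the Lipman criterion (stability of integrally closed ideals) is exactly Arf. Accordingly, I expect the real content of $(2)\Rightarrow(1)$ to be Lipman's stability characterization, which is equivalent to Theorem~\ref{3.2}.

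\emph{$(2)\Leftrightarrow(3)$.} For $(2)\Rightarrow(3)$: from $I^2=aI$ we get $I\cong A=I:I$. Since $A\supseteq R$ and $R:A\subseteq A$, we have $\tr_R(A)=A(R:A)=R:A$. I would aim to show $R:A\cong A$ by applying $(2)$ to $R:A$, noting that $R:A=R:I$ up to the unit $a$. For $(3)\Rightarrow(2)$: $(3)\Rightarrow(1)$ follows by the same divisoriality argument applied to trace ideals, which are integrally closed by Theorem~\ref{3.3} once Arf is known. The expected sticking point throughout is justifying that arbitrary integrally closed ideals are divisorial, i.e.\ reducing Theorem~\ref{3.2}(2) for all integrally closed ideals to divisorial ones.
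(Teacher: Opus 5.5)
Your $(1)\Rightarrow(3)$ and $(1)\Rightarrow(2)$ are correct. Your sketch of $(2)\Rightarrow(3)$ also works: apply $(2)$ to the trace ideal $R:A$, whose endomorphism ring is $R:(R:A)=A$ because $A=a^{-1}I$ is divisorial. However, the proposal never returns to $(1)$, so the equivalence is not proved. The missing step, which you correctly flag, is that every integrally closed $I\in\F(R)$ is divisorial. Without it, $(2)$ says nothing about the ideals that Theorem~\ref{3.2} quantifies over. Your attempt does not supply this:
\begin{itemize}
\item The inclusion $R:I\supseteq\bar R:I\bar R$ is false in general. The valid inclusion is the reverse one, and $I=R$ would already give $R\supseteq\bar R$.
\item Using $(2)$ to write $a^{-1}I=I:I$ presupposes that $I$ is divisorial, which is what you are trying to prove.
\item Appealing to Lipman's stability criterion merely restates Theorem~\ref{3.2}.
\end{itemize}
The idea the paper uses is as follows. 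By \cite[Proposition 1.1]{L} there are $A\in\B(R)$ and a non-zerodivisor $\alpha$ with $IA=\alpha A$, so $I=\overline{I\bar R}\cap R=\alpha\bar R\cap R$. For any $B\in\B(R)$, setting $\fkc=R:B$ gives $\fkc B=\fkc$, hence $B^{v_R}=R:\fkc=\fkc:\fkc\subseteq\bar R$. Therefore $R:(R:I)\subseteq\bigl(R:(R:IA)\bigr)\cap R=\alpha A^{v_R}\cap R\subseteq\alpha\bar R\cap R=I$.

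The return from $(3)$ is also missing. Deriving $(3)\Rightarrow(1)$ via Theorem~\ref{3.3} ``once Arf is known'' is circular, since Theorem~\ref{3.3} assumes Arf. Your first attempt at $(3)\Rightarrow(2)$ stops at wanting $J\cong B$. The paper closes this by applying $(3)$ to $A=I:I$ itself. Since $R:\tr_R(I)=I^{v_R}:I=A$ and $\tr_R(I)\cong I$ is divisorial, we get $\tr_R(I)=R:A$. Then $(3)$ applied to $dA\in\F^v(R)$ gives $A\cong\tr_R(A)=A(R:A)=R:A=\tr_R(I)\cong I$. Thus $I=aA$ with $a\in I$, and $I^2=aI$.

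The route you mention and then abandon, applying $(3)$ to a multiple of $R:I$, would also work. Since $\tr_R(R:I)=\tr_R(I)$, you get $R:I\cong I$. Hence $I^2\cong I(R:I)=\tr_R(I)\cong I$, so $I^2=fI$ for some $f\in K^{\times}$. The determinant trick applied to $(f^{-1}I)I=I$ over $I:I$ then forces $f\in I$.
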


\begin{proof}
$(1)\Rightarrow (3):$ This follows from Theorem~\ref{3.3}.

$(3)\Rightarrow (2):$
Set $A=I:I$. Since $I$ is divisorial, we have $R:\tr_R(I)=R:I(R:I)=I^{v_R}:I=A$.
By assumption, $\tr_R(I)$ is also divisorial, so $\tr_R(I)=R:(R:\tr_R(I))=R:A$.
Choose a non-zerodivisor $d\in R$ such that $dA\subseteq R$, and set $J=dA$.
Since $J\in \F^v(R)$, applying assumption $(3)$ to $J$ gives
$$A\cong J\cong \tr_R(J)=\tr_R(A)=A(R:A)=R:A=\tr_R(I)\cong I.$$
Thus, there exists $a\in K^{\times}$ such that $I=aA$.
Since $1\in A$, we have $a\in I$, and $I^2=a^2A^2=a(aA)=aI$.

$(2)\Rightarrow (1):$
First, we note that $B^{v_R}\subseteq \bar{R}$ for every $B\in\B(R)$. In fact, set $\mathfrak c=R:B$. Since $\fkc=\fkc B=\tr_R(B)$, we have
$B^{v_R}=R:\mathfrak c=\mathfrak c:\mathfrak c\subseteq\bar R$.

It suffices to show that every integrally closed ideal $I\in \F(R)$ is divisorial.
By \cite[Proposition 1.1(i), (ii)]{L}, there exist $A\in \B(R)$ and a non-zerodivisor $\alpha\in A$ such that $IA=\alpha A$, and we can write $I=\bar{I}=\bar{I\bar{R}}\cap R=\bar{\alpha\bar{R}}\cap R=\alpha \bar{R}\cap R$.
Hence, we have
$$R:(R:I)\subseteq (R:(R:IA))\cap R=(R:(R:\alpha A))\cap R=\alpha A^{v_R}\cap R\subseteq \alpha \bar{R}\cap R=I,$$
so $I$ is divisorial.
\end{proof}

\begin{rem}
Condition $(2)$ of Theorem~\ref{3.4} is equivalent to the requirement that every $I\in \F^v(R)$ be a principal ideal of its endomorphism ring $I:I$.
When $R$ satisfies this condition, $R$ is said to be {\it strongly $v$-stable} (\cite[\S1]{GP}).
For a related characterization of the Arf property in terms of reflexive fractional ideals, see also \cite[Theorem 5.7]{CDS}.
The latter result assumes a one-dimensional analytically unramified local ring with infinite residue field; it does not directly replace the assertion for semi-local rings used here.

\end{rem}

The following result concerns the Arf property of finite birational extensions.

\begin{thm}[{\cite[Corollary 2.5]{I1}}]\label{3.5}
Suppose that $R$ is Arf, and let $I\in \F(R)$ be integrally closed. Then $I:I$ is also Arf.
\end{thm}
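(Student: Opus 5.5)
We verify the two conditions of Definition~\ref{3.1} for $A=I:I$, or more conveniently condition $(2)$ of Theorem~\ref{3.2} for $A$. First, $A$ is again a one-dimensional Cohen--Macaulay semi-local ring with all localizations at maximal ideals of dimension one, since $A\in\B(R)$ is module-finite and birational over $R$. Thus $A$ is in the setting where Theorem~\ref{3.2} applies. We also note $\bar A=\bar R$ and $Q(A)=K$.

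The main tool is Lipman's characterization \cite[Proposition 1.1]{L}, already used in the proof of Theorem~\ref{3.4}. Since $R$ is Arf and $I$ is integrally closed, there is $a\in I$ with $I^2=aI$. Then $I:I=a^{-1}I=\bigcup_n I^n:I^n$ is the blowup $R[I/a]$, and Lipman shows that $R$ is Arf if and only if, for every such $I$, the ring $R[I/a]$ is again Arf; equivalently, Arfness is preserved along the blowup sequence. My first choice is therefore to invoke that equivalence directly. If a self-contained argument is wanted, I would proceed as follows.

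Let $J\in\F(A)$ be integrally closed in $A$. We want some $b\in J$ with $J^2=bJ$.
\begin{enumerate}[$(1)$]
\item Choose a non-zerodivisor $a\in I$ with $I=aA$. Then $aJ\subseteq aA=I\subseteq R$.
\item Form $L=\bar{aJ\bar R}\cap R=a\bar J\bar R\cap R$, the contraction to $R$ of an integrally closed $\bar R$-ideal. This $L$ is an integrally closed ideal of $R$.
\item Since $R$ is Arf, there is $c\in L$ with $L^2=cL$. Hence $L\bar R=c\bar R$, because $\bar R$ is a semi-local principal ideal ring on regular ideals.
\item Using $J=J\bar R\cap A$ (as $J$ is integrally closed in $A$), relate $J$ to $a^{-1}L$ and try to show that $b=c/a$ lies in $J$ and satisfies $J^2=bJ$.
\end{enumerate}
To check the condition $J^2=bJ$, use Definition~\ref{3.1}$(2)$ for $R$. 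Write elements $x,y\in J$ as $x=u/a$ and $y=v/a$ with $u/c,v/c\in\bar R$. Then $xy/b=uv/(ac)$, and the Arf condition for $R$ gives $uv/c\in R$, up to a factor of $a$.

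The main obstacle is the mismatch between $J$ and $L$. The contraction $a^{-1}L\cap A$ need not equal $J$, and the elements $uv/c$ land in $R$ rather than in $aJ$. I expect to fix this by replacing $L$ with $aJ+\dots$ chosen so that its integral closure contracts correctly. Alternatively, I would verify Definition~\ref{3.1}$(2)$ for $A$ directly. Given $x,y,z\in A$ with $y/x,z/x\in\bar R$, multiply by $a^2$: the elements $a^2x,a^2y,a^2z$ lie in $R$. The Arf condition for $R$ then gives $a^2yz/x\in R$. We still need to strip the factor $a^2$ down to membership in $A=a^{-1}I$; this uses $I=a\bar R\cap R$, which must be controlled carefully.

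Because of this delicacy, I would ultimately rely on Lipman's theorem that the blowup of an Arf ring at an integrally closed ideal is Arf. The argument then reduces to identifying $I:I$ with that blowup, which is the first step above.
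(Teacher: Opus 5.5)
Your proposal does not yet prove the statement. The route you ultimately rely on quotes, as a result of Lipman, that $R$ is Arf if and only if $R[I/a]$ is Arf for every integrally closed $I$ with $I^2=aI$. But the implication ``$\Leftarrow$'' is vacuous (take $I=R$, $a=1$), so the implication you need is literally the statement to be proved. The paper attributes it to \cite[Corollary 2.5]{I1}, and you give no precise reference for it in \cite{L}. Nor is it ``equivalent'' to Arfness being preserved along the successive blowups of the Jacobson radical. For a general integrally closed $I$, the ring $I:I$ need not occur in that chain; take $I=M_1$, one of two maximal ideals of a semi-local $R$ whose localizations are not DVRs. Your self-contained attempt stops exactly at the crux, and the ``mismatch between $J$ and $L$'' you worry about does not actually occur.

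The missing observation is that $I=aA$ with $I$ integrally closed gives
\[
 a\bar R\cap R=I\bar R\cap R\subseteq\bar I=I=aA.
\]
Here $I\bar R\cap R\subseteq\bar I$ is the determinant trick; this is the identity $I=\alpha\bar R\cap R$ used in the proof of Theorem~\ref{3.4}. With it, the argument closes as follows.

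Let $J\in\F(A)$ be integrally closed in $A$, and suppose $r\in R$ is integral over $aJ$. Dividing an equation of integral dependence by a power of $a$ shows that $r/a$ is integral over $J$, so $r/a\in\bar R$. Hence $r\in a\bar R\cap R\subseteq aA$. Then $r/a\in A$ is integral over $J$, so $r/a\in J$.

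Thus $aJ\in\F(R)$ is integrally closed, and your $L$ is exactly $aJ$. Theorem~\ref{3.2} for $R$ gives $c\in aJ$ with $(aJ)^2=c\,aJ$, that is, $J^2=(c/a)J$ with $c/a\in J$. Theorem~\ref{3.2} applied to $A$ then shows that $A$ is Arf.

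The same identity also completes your direct check of Definition~\ref{3.1}(2). Scale by $a$, not $a^2$, so that $ax,ay,az\in I\subseteq R$. The Arf condition for $R$ gives $ayz/x\in R$. Since also $ayz/x=a\cdot y\cdot(z/x)\in a\bar R$, you get $ayz/x\in aA$, that is, $yz/x\in A$.
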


By Theorems~\ref{3.3} and \ref{3.5}, we obtain the following.

\begin{cor}\label{3.6}
If $R$ is Arf, then every $A\in \B^v(R)$ is also Arf.
\end{cor}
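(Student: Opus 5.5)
To prove Corollary~\ref{3.6}, the plan is to reduce to Theorem~\ref{3.5}: we show that every $A\in\B^v(R)$ is the endomorphism ring $I:I$ of some integrally closed ideal $I\in\F(R)$.

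First we produce a divisorial ideal with endomorphism ring $A$. Since $A\in\B(R)$ is a finitely generated $R$-submodule of $K$, choose a non-zerodivisor $d\in R$ with $dA\subseteq R$, and set $J=dA\in\F(R)$. Multiplication by the unit $d$ of $K$ commutes with the operations $R:(-)$, so $J^{v_R}=dA^{v_R}=dA=J$. Hence $J\in\F^v(R)$. Moreover $J:J=dA:dA=A:A=A$, because $A$ is a ring containing $1$.

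Next we replace $J$ by an integrally closed ideal with the same endomorphism ring. Since $R$ is Arf and $J\in\F^v(R)$, Theorem~\ref{3.3} shows that $I=\tr_R(J)$ is integrally closed and $J\cong I$. So there is a non-zerodivisor $u\in K$ with $I=uJ$, and $I:I=uJ:uJ=J:J=A$. Alternatively, one can compute directly as in the proof of Theorem~\ref{3.4}: $\tr_R(J)=J(R:J)$ and $R:J(R:J)=J^{v_R}:J=A$. The isomorphism is the cleaner route.

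Finally, Theorem~\ref{3.5} applied to the integrally closed ideal $I\in\F(R)$ shows that $I:I=A$ is Arf. The only point needing care is that endomorphism rings are invariant under isomorphism of fractional ideals, that is, under multiplication by a non-zerodivisor of $K$. This is immediate, so there is no real obstacle.
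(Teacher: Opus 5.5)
Your proof is correct and follows the same route as the paper: clear denominators to get $dA\in\F^v(R)$ with endomorphism ring $A$, use Theorem~\ref{3.3} to replace it by an isomorphic integrally closed ideal, note that the endomorphism ring is unchanged, and apply Theorem~\ref{3.5}. The only difference is cosmetic: you take the specific integrally closed ideal $\tr_R(J)$ from the final clause of Theorem~\ref{3.3}, whereas the paper invokes its condition $(3)$.
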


\begin{proof}
Let $A\in \B^v(R)$. Choose a non-zerodivisor $a\in R$ such that $aA\subseteq R$, and set $I=aA$.
Since $I\cong A$, we have $I\in \F^v(R)$.
By Theorem~\ref{3.3}, there exists an integrally closed ideal $J\in \F(R)$ such that $I\cong J$.
Since $J:J=I:I=A$, Theorem~\ref{3.5} shows that $A$ is Arf.
\end{proof}

\begin{ex}\label{ex:arf-chain}
Let $k$ be a field and $V=k[[t]]$ be the formal power series ring over $k$. Set $R_0=V$ and $R_n=k[[t^2,t^{2n+1}]]\subseteq V$ $(n\ge 1)$.
Then $R_n$ is Arf and
\[
 \B^v(R_n)=\B(R_n)=\{R_i \mid 0\le i\le n\}.
\]
\end{ex}

We next recall the definition of strict closure.

\begin{dfn}[\cite{L}]\label{3.7}
For an extension $S/R$ of commutative rings, set
$$
R_S^{*}=\left\{ \alpha \in S \mid \alpha \otimes 1 = 1 \otimes \alpha \text{ in } S \otimes_RS\right\}
$$
and call this ring the {\it strict closure} of $R$ in $S$.
When $S=\overline{R}$, we write simply $R^*$ and call it the strict closure of $R$.
We say that $R$ is \emph{strictly closed} if $R=R^*$.
\end{dfn}

In general, $R_S^*$ is the kernel of the $R$-linear map $\delta:S\longrightarrow S\otimes_RS$ defined by $\alpha\longmapsto\alpha\otimes1-1\otimes\alpha$.
The exactness of localization therefore gives
\[
 W^{-1}(R_S^*)=(W^{-1}R)^*_{W^{-1}S}
\]
for every multiplicatively closed subset $W\subseteq R$.
In particular, if $R$ is Noetherian and satisfies $(S_1)$, then $W^{-1}\overline R=\overline{W^{-1}R}$, and hence $(W^{-1}R)^*=W^{-1}(R^*)$ (\cite[Lemma 4.1]{C}).
Thus, in this setting, strict closedness is preserved under arbitrary localization.

The following two theorems describe the relationship between strict closedness and the Arf property.

\begin{thm}[{\cite[Proposition 4.5, Theorem 4.6]{L}}, {\cite[Theorem 4.4]{C}}]\label{3.8}
Under the standing assumptions on Cohen-Macaulay semi-local rings of dimension one in this section, $R=R^*$ if and only if $R$ is Arf.
\end{thm}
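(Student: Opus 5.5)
The statement (Theorem~\ref{3.8}) is cited from \cite{L} and \cite{C}, so the task is to reconstruct the argument in the present setting. I would localize first and then work with the characterization of Arf rings by the stability of integrally closed ideals. Both properties are local at maximal ideals. Strict closure localizes by the exactness remark after Definition~\ref{3.7}, since $R$ is Cohen--Macaulay and hence satisfies $(S_1)$. The Arf property is local by Lipman's characterization: condition (2) of Definition~\ref{3.1} is visibly local, and for condition (1) the element $a$ can be chosen globally in the semi-local ring by prime avoidance. So I may assume $(R,\fkm)$ is a one-dimensional Cohen--Macaulay local ring. Recall that $\bar R$ is an intersection of finite birational extensions and that $R^*$ is the set of $\alpha\in\bar R$ with $\alpha\otimes1=1\otimes\alpha$.

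\emph{Arf $\Rightarrow$ strictly closed.} Let $\alpha\in R^*$. Write $\alpha=y/x$ with $x$ a non-zerodivisor in $R$, and set $I=\bar{xR}=x\bar R\cap R$ (as in the proof of Theorem~\ref{3.4}). By Theorem~\ref{3.2}, $I^2=aI$ for some $a\in I$, and then $A=I:I=a^{-1}I$ is Arf by Theorem~\ref{3.5}. I would argue by induction on the length $\ell(\bar R/R)$, in the finite case, or via the chain $R=R_0\subseteq R_1=\fkm:\fkm\subseteq R_2\subseteq\cdots$ of blow-ups, where $R_{i+1}=\fkm_i:\fkm_i$. The key local claim is that $R^*\cap R_1=R$. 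In $R_1\otimes_R R_1$ one has $\fkm R_1=a R_1$ with $a\in\fkm$, so $R_1=a^{-1}\fkm$. For $\beta=b/a\in R_1$ with $b\in\fkm$, the equality $\beta\otimes1=1\otimes\beta$ in $R_1\otimes_R R_1$, after multiplying by $a\otimes a$, compares $b\otimes a$ with $a\otimes b$ in $\fkm\otimes_R\fkm$ modulo torsion. Using $\fkm^2=a\fkm$ (the Arf condition), one shows this forces $b\in aR$, that is, $\beta\in R$. Since $R_1$ is again Arf and $R^*\subseteq R_1^*$ by functoriality of strict closure, iterating gives $R^*\cap R_n=R$ for all $n$. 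Then $R^*\subseteq\bigcup R_n=\bar R$ yields $R^*=R$.

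\emph{Strictly closed $\Rightarrow$ Arf.} I would verify condition (2) of Definition~\ref{3.1} directly. Let $y/x,\ z/x\in\bar R$, and put $\gamma=yz/x$. Then
\[
\gamma\otimes1=\tfrac{y}{x}\,x\,\tfrac{z}{x}\otimes1
=\tfrac{y}{x}\otimes \tfrac{z}{x}\,x \quad\text{(moving } x\in R \text{ across)},
\]
which equals $\tfrac{y}{x}\otimes z$. Symmetric manipulations give
\[
\tfrac{y}{x}\otimes z=y\otimes\tfrac{z}{x}=1\otimes\gamma ,
\]
using that $y,z\in R$ can be moved across the tensor sign. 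Hence $\gamma\in R^*=R$. Condition (1) of Definition~\ref{3.1} then has to be derived; Lipman shows that (2) implies stability of integrally closed ideals. For this I would show that for integrally closed $I$ with reduction $aR$, each $u\in I$ satisfies $u/a\in\bar R$, and so $(u/a)(v/a)\cdot a\in R$ by (2), whence $I^2\subseteq aR$. A refinement of this gives $I^2=aI$, and Theorem~\ref{3.2} finishes.

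The main obstacle is the tensor computation in the first direction: the torsion of $R_1\otimes_R R_1$ must be handled so that $\beta\otimes1=1\otimes\beta$ really forces $\beta\in R$. I would try first to use the presentation $R_1\cong\fkm$ (as $\fkm\cong a^{-1}\fkm$) and reduce the question to the vanishing of $b\otimes a-a\otimes b$ in $\fkm\otimes_R\fkm$. Then I would apply the natural map $\fkm\otimes\fkm\to\fkm/\fkm^2\otimes_k\fkm/\fkm^2$, in which $a$ and any $b\notin aR+\fkm^2$ are linearly independent. The second direction's passage from (2) to $I^2=aI$ is a secondary difficulty.
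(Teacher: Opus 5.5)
The paper gives no proof of this statement; it only refers to Lipman and to [C]. So your argument has to stand on its own. In the direction Arf $\Rightarrow$ strictly closed it has a genuine gap.

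Your key claim is $R^*\cap R_1=R$. Membership in $R^*$ means $\beta\otimes1=1\otimes\beta$ in $\bar R\otimes_R\bar R$, but your computation assumes this equality in $R_1\otimes_R R_1$. Read correctly, the computation is sound. You transport along the module isomorphism $R_1\cong\fkm$, $x\mapsto ax$, so that $\beta\otimes1-1\otimes\beta$ corresponds to $b\otimes a-a\otimes b\in\fkm\otimes_R\fkm$, then map to $\fkm/\fkm^2\otimes_k\fkm/\fkm^2$ and use $\fkm^2=a\fkm$. What this proves is only that the strict closure $R^*_{R_1}$ of $R$ \emph{in $R_1$} equals $R$.

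A priori you only have $R^*_{R_1}\subseteq R^*\cap R_1$. An equality in $\bar R\otimes_R\bar R$ cannot be pulled back to $R_1\otimes_R R_1$, because tensoring does not preserve the injection $R_1\hookrightarrow\bar R$. Since $\bar R\otimes_R\bar R=\varinjlim R_n\otimes_R R_n$, your iteration would need $\beta\otimes1\neq1\otimes\beta$ in every $R_n\otimes_R R_n$, and this is never addressed.

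This is not a technicality. In $\bar R\otimes_R\bar R$ one has $a^2(\beta\otimes1-1\otimes\beta)=ab(1\otimes1)-ab(1\otimes1)=0$, so the element is torsion, and working ``modulo torsion'' kills it. The obvious invariant, the image in $(\bar R/\fkm\bar R)\otimes_k(\bar R/\fkm\bar R)$, only yields $\beta\in R+\fkm\bar R$. That is vacuous, for example, for $R=k[[t^2,t^5]]$, $R_1=k[[t^2,t^3]]$, $\beta=t^3$. Detecting $\beta\otimes1\ne1\otimes\beta$ in $\bar R\otimes_R\bar R$ for $\beta\in R_1\setminus R$ is exactly the substance of the cited results, and your proposal supplies no argument for it.

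In the converse, your check of condition (2) of Definition~\ref{3.1} is correct: $\gamma\otimes1=\frac yx\otimes z=y\otimes\frac zx=1\otimes\gamma$, moving $z$, then $x$ and $y$, across. Condition (1), however, is never derived. Your stability argument starts from an integrally closed $I$ ``with reduction $aR$'', which is condition (1) itself.

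Over finite residue fields, (2) does not supply this, and your claim that (2) implies stability of integrally closed ideals is false. Take $R=\mathbb F_2[[x,y]]/(xy(x+y))$. It satisfies (2), since every integrally closed ideal that admits a principal reduction is stable. But $\fkm$ has no principal reduction and is not stable, so $R$ is not Arf.

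So you must use strict closedness again at this point. The usual device is the faithfully flat extension $R\to R(X)=R[X]_{\fkm R[X]}$, which has infinite residue field. You would need to check that strict closure and integral closure are compatible with it, and that stability of $I$ (invertibility of $I$ over $I:I$) descends. Once a principal reduction is available, your ``refinement'' is immediate: $uv/a\in R$ and $(uv/a)/a\in\bar R$ give $uv/a\in\bar{aR}=I$, hence $I^2=aI$.
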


\begin{thm}[{\cite[Proposition 4.2, Theorem 4.4]{C}}]\label{3.9}
Let $R$ be a Noetherian ring satisfying $(S_2)$. Then the following conditions are equivalent.
\benu[$(1)$]
\item
$R=R^*$.
\item
For every $P\in \H_1(R)$, $R_P$ is Arf.

\enenu

\end{thm}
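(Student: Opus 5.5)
We argue by localizing at height-one primes, using the compatibility of strict closure with localization recalled after Definition~\ref{3.7}, together with Theorem~\ref{3.8} in dimension one. Since $R$ satisfies $(S_2)$, it satisfies $(S_1)$. Hence $\overline{W^{-1}R}=W^{-1}\overline R$ and $(W^{-1}R)^*=W^{-1}(R^*)$ for every multiplicatively closed subset $W\subseteq R$. In particular, $(R_P)^*=(R^*)_P$ for every $P\in\Spec R$.

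$(1)\Rightarrow(2)$: Let $P\in\H_1(R)$. Then $\dim R_P=1$, and $(S_2)$ gives $\depth R_P\ge\min(2,\height P)=1$. So $R_P$ is a one-dimensional Cohen--Macaulay local ring, and it satisfies the standing assumptions of this section. By the localization formula, $(R_P)^*=(R^*)_P=R_P$. Theorem~\ref{3.8} then shows that $R_P$ is Arf.

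$(2)\Rightarrow(1)$: For each $P\in\H_1(R)$, Theorem~\ref{3.8} applied to the Arf ring $R_P$ gives $(R^*)_P=(R_P)^*=R_P$. Now fix $\alpha\in R^*\subseteq\overline R\subseteq K$. Then $\alpha\in R_P$ for every $P\in\H_1(R)$, and it remains to show $\alpha\in R$. This is the familiar fact that a Noetherian ring satisfying $(S_2)$ is the intersection of its localizations at height-one primes, inside $K$.

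To prove this fact, consider the ideal $\fkc=R:_R\alpha=\{r\in R\mid r\alpha\in R\}$. It contains a non-zerodivisor, namely a common denominator of $\alpha$. Suppose $\fkc\ne R$, and choose $Q\in\Ass_R(R/\fkc)$. Then $\height Q\ge1$, because $Q$ contains a non-zerodivisor. The module $M=(R+R\alpha)/R$ is isomorphic to $R/\fkc$, so $Q\in\Ass_R M$ and $\depth M_Q=0$. The exact sequence
\[
0\to R_Q\to (R+R\alpha)_Q\to M_Q\to0
\]
has middle term a finitely generated torsion-free module containing $R_Q$. The depth lemma therefore gives $\depth R_Q\le 1$. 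Indeed, if $\depth R_Q\ge2$, then $\mathrm{Ext}^1_{R_Q}(k(Q),R_Q)=0$. In that case any element of $\mathrm{Hom}(k(Q),M_Q)$ would lift to a $Q$-torsion element of $(R+R\alpha)_Q$, and such an element is zero since this module lies in $K$, a contradiction. By $(S_2)$ we have $\depth R_Q\ge\min(2,\height Q)$, so $\height Q=1$. But then $\alpha\in R_Q$, which means $\fkc_Q=R_Q$, contradicting $Q\supseteq\fkc$. Hence $\fkc=R$, so $\alpha\in R$ and $R^*=R$.

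The main obstacle is this last step: it is where $(S_2)$, rather than merely $(S_1)$, is genuinely used, and the depth computation above is the delicate point to write carefully. The other ingredients are the two facts already recorded in the paper: strict closure commutes with localization for $(S_1)$ rings, and Theorem~\ref{3.8} in the one-dimensional case.
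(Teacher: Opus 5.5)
Your argument is correct. The paper gives no proof here and only cites \cite[Proposition 4.2, Theorem 4.4]{C}, and your reduction is exactly the height-one reduction plus dimension-one theorem that this citation packages: $(R_P)^*=(R^*)_P$ for $(S_1)$ rings, Theorem~\ref{3.8} at each height-one prime, and the $(S_2)$ fact that an element of $K$ lying in every $R_P$ with $P$ of height one lies in $R$. Your proof of that last fact uses the same ideal $\mathfrak c=(a):_Rb$ (for $\alpha=b/a$) as the paper's Lemma~\ref{2.1}, with the depth lemma in place of the embedding $R/\mathfrak c\hookrightarrow R/(a)$, and it works for non-domains as the statement requires. One small precision: the $QR_Q$-torsion element lies in $(R+R\alpha)_Q\subseteq K_Q$ rather than in $K$, and it vanishes because the non-zerodivisor of $R$ contained in $\mathfrak c\subseteq Q$ is a unit of $K_Q$.
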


\section{Basic properties of divisorial ideals}\label{sec2}

In this section, we collect the basic properties of divisorial ideals needed in the subsequent sections.
For the general theory of the $v$-operation, see, for example, \cite{G}; for localization, associated prime ideals, and primary decomposition of divisorial ideals in Mori domains, see \cite{HLV}.
We state and prove these results in the present setting.

Throughout the remainder of this paper, unless otherwise specified,
we assume that $R$ is a Noetherian domain satisfying Serre's condition $(S_2)$ and is not a field.
Colon operations on finitely generated fractional ideals commute with localization, and hence $(X^{v_R})_P=(X_P)^{v_{R_P}}$.

\begin{lem}\label{2.1}
We have $\displaystyle R=\bigcap_{P\in \H_1(R)} R_P$.
\end{lem}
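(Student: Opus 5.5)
The inclusion $R\subseteq\bigcap_{P\in\H_1(R)}R_P$ is trivial, so everything rests on the reverse inclusion. I will prove it with the standard depth argument: $(S_2)$ forces every associated prime of a principal ideal to have height one. Since $R$ is a domain that is not a field, $\H_1(R)$ is nonempty and the intersection makes sense inside $K$.

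Let $\alpha\in\bigcap_{P\in\H_1(R)}R_P$ and write $\alpha=b/a$ with $a,b\in R$, $a\neq0$. Consider the ideal $\fka=aR:_R b=\{r\in R\mid rb\in aR\}$. We have $\alpha\in R$ exactly when $b\in aR$, that is, when $\fka=R$. Suppose instead that $\fka\ne R$. Then the image of $b$ in $R/aR$ is a nonzero element whose annihilator is $\fka$. Choose a prime $Q$ that is maximal among annihilators of nonzero elements of $b(R/aR)\subseteq R/aR$. Such a $Q$ is an associated prime of $R/aR$ containing $\fka$.

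Next I show $\height Q=1$. Localize at $Q$: $QR_Q\in\Ass(R_Q/aR_Q)$, so $\depth R_Q/aR_Q=0$. Since $a$ is a nonzero element of a domain, it is a non-zerodivisor, and hence $\depth R_Q=1$. Serre's condition $(S_2)$ gives $\depth R_Q\ge\min(2,\dim R_Q)$, which forces $\dim R_Q\le1$. On the other hand $Q\ni a\neq0$, so $Q\neq0$, and therefore $\height Q=1$, i.e. $Q\in\H_1(R)$.

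Finally, $\alpha\in R_Q$ means there is $s\in R\setminus Q$ with $s\alpha\in R$, so $sb\in aR$ and $s\in\fka\subseteq Q$. This contradicts $s\notin Q$. Hence $\fka=R$ and $\alpha\in R$. The main point is the middle step, where $(S_2)$ rules out embedded associated primes of height at least $2$ for principal ideals; the rest is routine.
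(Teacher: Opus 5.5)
Your proposal is correct and follows the paper's argument: the paper picks $P\in\Ass_R R/[(a):_R b]$ and transfers it to $\Ass_R R/(a)$ via the injection $R/[(a):_R b]\to R/(a)$ given by multiplication by $b$. Your maximal annihilator in $b(R/aR)\cong R/\fka$ is exactly such a prime, and the depth step using $(S_2)$ and the final contradiction $s\in\fka\subseteq Q$ are the same as in the paper.
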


\begin{proof}
Set $A=\bigcap_{P\in \H_1(R)} R_P$. The inclusion $R\subseteq A$ is clear.
Suppose that $R\subsetneq A$, and choose $x\in A\setminus R$.
We write $x=\frac{b}{a}$ $(a, b\in R)$. Then, since $x\notin R$, we have $(a):_R b\subsetneq R$.
Choose $P\in \Ass_R R/[(a):_R b]$. Since multiplication by $b$ induces an injection $R/[(a):_R b]\to R/(a)$, we have $P\in \Ass_R R/(a)$.
Thus $\depth R_P/aR_P=0$, and hence $\depth R_P=1$.
The $(S_2)$ condition gives $P\in \H_1(R)$.
Since $x\in A\subseteq R_P$, we can write $x=\frac{t}{s}=\frac{b}{a}$ $(s\in R\setminus P,~ t\in R)$.
It follows from $sb=ta \in (a)$ that $s\in (a):_R b\subseteq P$, which gives a contradiction.
Therefore $R=A$.
\end{proof}

\begin{prop}\label{2.2}
The following assertions hold.
\benu[$(1)$]
\item
For $X\in \bsF(R)$, we have $\displaystyle X^{v_R}=\bigcap_{P\in \H_1(R)} (X_P)^{v_{R_P}}$.
\item
For $I\in \F(R)$, if $I^{v_R}\neq R$, then $\displaystyle I^{v_R}=\bigcap_{P\in V(I)\cap \H_1(R)}[ (IR_P)^{v_{R_P}}\cap R]$.

\enenu
\end{prop}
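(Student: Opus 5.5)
For (1), I will first establish the inclusion $X^{v_R}\subseteq(X_P)^{v_{R_P}}$ for each $P\in\H_1(R)$. This follows because colon operations on finitely generated fractional ideals commute with localization, so $(X^{v_R})_P=(X_P)^{v_{R_P}}$.

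For the reverse inclusion, I will take $x\in\bigcap_{P}(X_P)^{v_{R_P}}$ and show $x(R:X)\subseteq R$. Fix $y\in R:X$. For each $P\in\H_1(R)$ we have $y\in R_P:X_P$, and $x\in R_P:(R_P:X_P)$, so $xy\in R_P$. Lemma~\ref{2.1} then gives $xy\in\bigcap_{P\in\H_1(R)}R_P=R$. Hence $x\in R:(R:X)=X^{v_R}$.

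For (2), I will use the fact that for $P\in\H_1(R)$ with $I\not\subseteq P$ we have $IR_P=R_P$, so $(IR_P)^{v_{R_P}}=R_P$. Combining this with (1) and intersecting with $R$ (note $I^{v_R}\subseteq R^{v_R}=R$), we obtain
\[
I^{v_R}=I^{v_R}\cap R=\Bigl(\bigcap_{P\in V(I)\cap\H_1(R)}(IR_P)^{v_{R_P}}\cap R\Bigr)\cap\Bigl(\bigcap_{P\notin V(I)}R_P\Bigr)\cap R .
\]
The second intersection contains $R$, so it can be dropped. This leaves exactly the claimed formula, provided $V(I)\cap\H_1(R)\neq\emptyset$.

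The only genuine point, and the main obstacle, is justifying that nonemptiness; otherwise the empty intersection would be read as $K$ or $R$. I will show that if $V(I)\cap\H_1(R)=\emptyset$, then $I^{v_R}=\bigcap_{P\in\H_1(R)}R_P=R$ by (1) and Lemma~\ref{2.1}. This contradicts the hypothesis $I^{v_R}\neq R$, so the index set is nonempty and the formula holds.
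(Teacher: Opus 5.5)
Your proof is correct and follows the paper's own argument: part (1) uses the localization identity for the easy inclusion and Lemma~\ref{2.1} for the reverse one, and part (2) uses $IR_P=R_P$ for $P\notin V(I)$ and intersects with $R$. Your explicit check that $V(I)\cap\H_1(R)\neq\emptyset$, which is where the hypothesis $I^{v_R}\neq R$ is used, is a point the paper leaves implicit.
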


\begin{proof}
$(1)$ For every $P\in\H_1(R)$, we have $X^{v_R}\subseteq (X^{v_R})_P=(X_P)^{v_{R_P}}$, and hence $X^{v_R}\subseteq\bigcap_{P\in \H_1(R)} (X_P)^{v_{R_P}}$.
Let $x\in\bigcap_{P\in \H_1(R)} (X_P)^{v_{R_P}}$. For every $P\in \H_1(R)$, we have $x\cdot (R:X)\subseteq x\cdot (R_P: X_P)\subseteq R_P$.
By Lemma~\ref{2.1}, it follows that $x\cdot (R:X)\subseteq \bigcap_{P\in \H_1(R)} R_P=R$.
Thus $x\in X^{v_R}$, which proves $X^{v_R}=\bigcap_{P\in \H_1(R)} (X_P)^{v_{R_P}}$.

$(2)$ Since $I^{v_R}\subseteq R$ and $IR_P=R_P$ whenever $P\notin V(I)$, assertion (1) gives
\[
I^{v_R}
=R\cap\bigcap_{P\in V(I)\cap\H_1(R)}(IR_P)^{v_{R_P}}
=\bigcap_{P\in V(I)\cap\H_1(R)}\bigl((IR_P)^{v_{R_P}}\cap R\bigr).
\]
\end{proof}

\begin{cor}\label{2.3}
For $X, Y\in\bsF^v(R)$, the following conditions are equivalent.
\benu[$(1)$]
\item
$X=Y$.
\item
For every $P\in \H_1(R)$, $X_P=Y_P$.
\enenu
\end{cor}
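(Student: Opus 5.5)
The implication $(1)\Rightarrow(2)$ is trivial: localization is compatible with equality of submodules of $K$, so $X=Y$ gives $X_P=Y_P$ for every $P\in\H_1(R)$. The content is therefore $(2)\Rightarrow(1)$, and I would deduce it directly from Proposition~\ref{2.2}$(1)$.

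Assume $X_P=Y_P$ for all $P\in\H_1(R)$. Then for each such $P$ the localized $v$-closures agree:
\[
(X_P)^{v_{R_P}}=(Y_P)^{v_{R_P}},
\]
since $X_P$ and $Y_P$ are literally the same $R_P$-submodule of $K$. Because $X$ and $Y$ are divisorial, Proposition~\ref{2.2}$(1)$ gives
\[
X=X^{v_R}=\bigcap_{P\in\H_1(R)}(X_P)^{v_{R_P}}=\bigcap_{P\in\H_1(R)}(Y_P)^{v_{R_P}}=Y^{v_R}=Y .
\]
This proves $(1)$.

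There is no real obstacle here. The only points to check are that Proposition~\ref{2.2}$(1)$ applies to arbitrary $X\in\bsF(R)$, which it does, and that divisoriality is what allows $X^{v_R}$ to be replaced by $X$. An alternative route is to observe that $(X_P)^{v_{R_P}}=X_P$ for divisorial $X$, using the compatibility $(X^{v_R})_P=(X_P)^{v_{R_P}}$ noted at the start of the section. Then $X=\bigcap_P X_P$, and likewise for $Y$, which again yields $X=Y$.
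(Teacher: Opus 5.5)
Your proof is correct and is exactly the paper's argument: the paper also treats $(1)\Rightarrow(2)$ as immediate and obtains $(2)\Rightarrow(1)$ from the chain $X=X^{v_R}=\bigcap_{P\in\H_1(R)}(X_P)^{v_{R_P}}=\bigcap_{P\in\H_1(R)}(Y_P)^{v_{R_P}}=Y^{v_R}=Y$, which comes from Proposition~\ref{2.2}$(1)$. Your alternative, using $(X_P)^{v_{R_P}}=(X^{v_R})_P=X_P$, is also valid and amounts to the same argument.
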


\begin{proof}
Suppose that $X_P=Y_P$ for every $P\in \H_1(R)$. Then Proposition~\ref{2.2} gives
$$X=X^{v_R}=\bigcap_{P\in \H_1(R)} (X_P)^{v_{R_P}}=\bigcap_{P\in \H_1(R)} (Y_P)^{v_{R_P}}=Y^{v_R}=Y.$$
\end{proof}

\begin{prop}[{cf.\ \cite[Theorem 2.1, Propositions 2.3, 3.3]{HLV}}]\label{2.4}
The following assertions hold.
\benu[$(1)$]
\item
$\H_1(R)\subseteq \F^v(R)$.
\item
For $I\in \F(R)$, we have $\displaystyle \Ass_R R/I^{v_R} = \Min_R R/I^{v_R}=V(I)\cap \H_1(R)$.

\enenu

\end{prop}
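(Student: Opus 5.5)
For (1), I take $P\in\H_1(R)$ and show $P^{v_R}=P$ using Proposition~\ref{2.2}(1): $P^{v_R}=\bigcap_{Q\in\H_1(R)}(P_Q)^{v_{R_Q}}$. For $Q\ne P$ of height one we have $P\not\subseteq Q$, so $P_Q=R_Q$, and $(P_Q)^{v_{R_Q}}=R_Q$. For $Q=P$, the ring $R_P$ is a one-dimensional local domain, and $PR_P$ is its maximal ideal. Either $R_P:PR_P=R_P$, and then $(PR_P)^{v}=R_P$, or $R_P:PR_P\supsetneq R_P$, and then $(PR_P)^{v}=R_P:(R_P:PR_P)$ is a proper ideal containing $PR_P$, hence equals $PR_P$. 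The first case cannot occur: by $(S_2)$, $R_P$ has depth one, so $PR_P\in\Ass R_P/aR_P$ for $0\ne a\in PR_P$. Thus there is $b\notin aR_P$ with $bPR_P\subseteq aR_P$, and $b/a\in (R_P:PR_P)\setminus R_P$. Hence $P^{v_R}=R\cap\bigcap_{Q\ne P}R_Q\cap PR_P$, which by Lemma~\ref{2.1} is $R\cap PR_P=P$.

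For (2), set $J=I^{v_R}$ and first show $\Ass_R R/J\subseteq\H_1(R)$. Let $Q\in\Ass_R R/J$, so $Q=J:_R x$ for some $x\in R\setminus J$. If $\height Q\ge2$, then $(S_2)$ gives $\depth R_Q\ge2$. Localizing at $Q$ keeps $J_Q=(I_Q)^{v_{R_Q}}$ divisorial and $QR_Q\in\Ass R_Q/J_Q$, so I may assume $R$ is local with maximal ideal $Q$ of depth at least $2$. The point is that $R:Q=R$ in that case: if $y=b/a\in R:Q$ with $a\in Q$, then $bQ\subseteq aR$, and $\depth R/aR\ge1$ forces $b\in aR$. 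On the other hand $x\in J:Q$, and
\[
J:Q=(R:(R:J)):Q=R:\bigl(Q(R:J)\bigr).
\]
Since $R:Q=R$, any $y\in R:(R:J)Q$ satisfies $y(R:J)\subseteq R:Q=R$, so $y\in J$. Thus $J:Q=J$, contradicting $x\notin J$. This step, showing that $v$-closure kills embedded or high-height associated primes via depth, is the main point of the proof; the remaining steps are bookkeeping.

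Hence every associated prime of $R/J$ has height one. Such primes are automatically minimal over $J$, because $J\ne R$ (if $J=R$, all three sets are empty, since then $I\not\subseteq P$ for all $P\in\H_1(R)$). So $\Ass=\Min$. Each minimal prime of $J$ contains $I$, and we have just shown it has height one, so $\Min_R R/J\subseteq V(I)\cap\H_1(R)$.

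Conversely, let $P\in V(I)\cap\H_1(R)$. Then $J_P=(IR_P)^{v_{R_P}}\subseteq R_P$ is proper: otherwise $R_P:IR_P=R_P$, but $IR_P\subseteq PR_P$ and $R_P:PR_P\supsetneq R_P$ by the argument in (1), so $R_P:IR_P\supseteq R_P:PR_P\supsetneq R_P$. Thus $J\subseteq P$, and since $\height P=1$ and $J\ne0$, $P$ is minimal over $J$.
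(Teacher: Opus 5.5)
Your proof is correct. Part (1) is essentially the paper's argument: the paper localizes $P^{v_R}\subseteq R$ at $P$ instead of invoking Proposition~\ref{2.2}(1), but the key point is the same, namely that $R_P:PR_P\ne R_P$ because $PR_P\in\Ass R_P/aR_P$.

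Part (2) takes a genuinely different route.

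\emph{The paper's route.} The paper first obtains $V(I)\cap\H_1(R)\subseteq\Min_R R/I^{v_R}$ from $I^{v_R}\subseteq P^{v_R}=P$. It then gets $\Ass_R R/I^{v_R}\subseteq V(I)\cap\H_1(R)$ by observing that the intersection formula of Proposition~\ref{2.2}(2) is an irredundant primary decomposition. Its components $(IR_P)^{v_{R_P}}\cap R$ are $P$-primary. In this route $(S_2)$ enters only through Lemma~\ref{2.1}.

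\emph{Your route.} You rule out associated primes of height $\ge 2$ directly. After localizing, $\depth R_Q\ge 2$ forces $R_Q:QR_Q=R_Q$. Then $J:Q=R:\bigl(Q(R:J)\bigr)=J$ for the divisorial ideal $J$, so $Q$ cannot be associated.

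\emph{Comparison.} Your argument is local and does not need the global intersection formula. It in fact proves the more general statement that a divisorial ideal has no associated prime $Q$ with $R_Q:QR_Q=R_Q$. The paper's argument is shorter given Proposition~\ref{2.2}, and it ties the associated primes directly to explicit primary components, which are reused in Lemma~\ref{2.6} and Proposition~\ref{2.7}.

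Two small remarks. First, a height-one prime containing $J$ is minimal over $J$ because $J\ne 0$, not because $J\ne R$. Second, in your last paragraph you could simply use $J\subseteq P^{v_R}=P$ from part (1) instead of recomputing $R_P:IR_P$.
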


\begin{proof}
$(1)$ First, if $(R, \m)$ is a Noetherian local domain of dimension one, then $\m$ is divisorial.
Indeed, for $0\ne a\in\mathfrak m$, the ring $R/aR$ is a nonzero Artinian local ring, so there exists $b\in(aR:\mathfrak m)\setminus aR$.
Since $\frac{b}{a}\in(R:\mathfrak m)\setminus R$, we have $R:\mathfrak m\ne R$.
If $\fkm^{v_R}\ne \fkm$, then $\fkm^{v_R}=R$, and consequently
$$R=R:R=R:\fkm^{v_R}=R:\fkm,$$
which gives a contradiction. Hence $\fkm$ is divisorial.

It follows that, for $P\in\H_1(R)$, we have $(P^{v_R})R_P=(PR_P)^{v_{R_P}}=PR_P$.
Since $P^{v_R}\subseteq R$, this yields $P^{v_R}\subseteq PR_P\cap R=P$, so $P$ is divisorial.

$(2)$ If $I^{v_R}=R$, both sides are empty, so assume that $I^{v_R}\ne R$.
For $P\in V(I)\cap \H_1(R)$, the inclusion $I^{v_R}\subseteq P^{v_R}=P$ gives $P\in V(I^{v_R})$. Therefore $V(I)\cap \H_1(R)\subseteq \Min_R R/I^{v_R}\subseteq \Ass_R R/I^{v_R}$.
The equality $I^{v_R}=\bigcap_{P\in V(I)\cap \H_1(R)}[ (IR_P)^{v_{R_P}}\cap R]$ in Proposition~\ref{2.2} $(2)$ is thus an irredundant primary decomposition of $I^{v_R}$, and we obtain $\Ass_R R/I^{v_R} = \Min_R R/I^{v_R}=V(I)\cap \H_1(R)$.
\end{proof}

\begin{rem}\label{2.5}
For $X, Y\in \bsF(R)$, the following observations will be useful.
\benu[$(1)$]
\item
If $X, Y$ are divisorial, then so is $X\cap Y$.
\item
The fractional ideal $R:X$ is divisorial.
\item
If $X$ is divisorial, then so is $X:Y$.
\enenu
\end{rem}

\begin{lem}[{cf.\ \cite[Proposition 3.2]{HLV}}]\label{2.6}
Let $I\in \F^v(R)$ and $P\in \Ass_R R/I$, and set $I(P)=IR_P\cap R$. Then $I(P)$ is divisorial.

\end{lem}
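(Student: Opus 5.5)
By Proposition~\ref{2.4}(2), $P\in\Ass_R R/I=V(I)\cap\H_1(R)$, so $P$ has height one. The plan is to compute the $v$-closure of $I(P)$ locally at every height-one prime and then apply Proposition~\ref{2.2}(1), or equivalently Corollary~\ref{2.3}. We may assume $I\neq R$; otherwise $\Ass_R R/I$ is empty. First note that $I(P)$ is a nonzero ideal of $R$ contained in $PR_P\cap R=P$. In particular $I(P)\in\F(R)$, and $I(P)^{v_R}\subseteq R$ because $I(P)\subseteq R$ implies $R\subseteq R:I(P)$.

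Next we localize. Since $I(P)_P=IR_P$, localizing $I=I^{v_R}$ gives
\[
(I(P)^{v_R})_P=(IR_P)^{v_{R_P}}=(I^{v_R})_P=IR_P .
\]
For $Q\in\H_1(R)$ with $Q\neq P$, we claim $I(P)\not\subseteq Q$. Indeed, $I(P)$ is $PR_P\cap R$-primary, being the contraction of the $PR_PR_P$-primary ideal $IR_P$ (here $\dim R_P=1$ and $IR_P\neq R_P$). Its radical is therefore $P$. If $I(P)\subseteq Q$, then $P\subseteq Q$, which forces $P=Q$ since both have height one. Hence $I(P)_Q=R_Q$, and so $(I(P)^{v_R})_Q=(R_Q)^{v_{R_Q}}=R_Q$.

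Now apply Proposition~\ref{2.2}(1):
\[
I(P)^{v_R}=\bigcap_{Q\in\H_1(R)}(I(P)_Q)^{v_{R_Q}}=IR_P\cap\bigcap_{Q\neq P}R_Q .
\]
Intersecting with $R$, which contains $I(P)^{v_R}$, and using $\bigcap_{Q\in\H_1(R)}R_Q=R$ from Lemma~\ref{2.1}, we obtain $I(P)^{v_R}\subseteq IR_P\cap R=I(P)$. The reverse inclusion is automatic, so $I(P)$ is divisorial.

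The only point requiring care is the primary and radical claim for $I(P)$, that is, that it is not contained in any other height-one prime. This follows from the standard fact that the contraction of a primary ideal is primary with contracted radical, together with $\dim R_P=1$.
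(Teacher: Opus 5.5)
Your proof is correct and is essentially the paper's argument. The paper uses the fact that $I(P)$ is $P$-primary, so that only $P$ contributes in Proposition~\ref{2.2}(2), giving $I(P)^{v_R}=(IR_P)^{v_{R_P}}\cap R=IR_P\cap R$; this is exactly your computation via Proposition~\ref{2.2}(1). As a side remark, the primary step can even be skipped, since $I(P)^{v_R}\subseteq R\cap\bigl(I(P)^{v_R}\bigr)_P=R\cap IR_P$ already.
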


\begin{proof}
Since $P\in V(I(P))\cap \H_1(R)\subseteq \Min_R R/I(P)\subseteq \Ass_R R/I(P)=\{P\}$, Proposition~\ref{2.2} $(2)$ gives
$$[I(P)]^{v_R}=(I(P)R_P)^{v_{R_P}}\cap R=(IR_P)^{v_{R_P}}\cap R=(I^{v_R})R_P\cap R=IR_P\cap R=I(P).$$
Thus $I(P)$ is divisorial.
\end{proof}

\begin{prop}[{cf.\ \cite[Proposition 2.7]{HLV}}]\label{2.7}
For $R\neq I\in\F^v(R)$, we have
\[
I=\bigcap_{P\in\Ass_R R/I}I(P)
=\left(\prod_{P\in\Ass_R R/I}I(P)\right)^{v_R}.
\]
\end{prop}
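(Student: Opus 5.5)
The plan is to prove the first equality as a restatement of earlier results, and then to get the $v$-product equality by checking it locally at every height-one prime. Set $\Lambda=\Ass_R R/I$. Since $I\ne R$ is divisorial, Proposition~\ref{2.4}~$(2)$ (applied with $I=I^{v_R}$) gives $\Lambda=\Min_R R/I=V(I)\cap\H_1(R)$. This set is finite and nonempty because $R$ is Noetherian and $I\ne R$.

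\textbf{First equality.} Proposition~\ref{2.2}~$(2)$ gives $I=\bigcap_{P\in\Lambda}\bigl((IR_P)^{v_{R_P}}\cap R\bigr)$. Since $I$ is divisorial, $(IR_P)^{v_{R_P}}=(I^{v_R})_P=IR_P$, so each term equals $I(P)$. Hence $I=\bigcap_{P\in\Lambda}I(P)$.

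\textbf{Second equality.} Set $J=\prod_{P\in\Lambda}I(P)$. Both $I$ and $J^{v_R}$ are divisorial, so by Corollary~\ref{2.3} it suffices to show $I_Q=(J^{v_R})_Q=(J_Q)^{v_{R_Q}}$ for every $Q\in\H_1(R)$.

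To do this, compute $I(P)_Q$ for $P\in\Lambda$ and $Q\in\H_1(R)$.
\begin{itemize}
\item If $Q=P$, then $I(P)_P=IR_P$.
\item If $Q\ne P$, then, since $\Ass_R R/I(P)=\{P\}$ (Lemma~\ref{2.6} together with Proposition~\ref{2.4}~$(2)$, or directly because $I(P)$ is $P$-primary), we have $V(I(P))\cap\H_1(R)=\{P\}$. As $P$ and $Q$ are distinct height-one primes, $Q\not\supseteq P$, hence $Q\notin V(I(P))$, and so $I(P)_Q=R_Q$.
\end{itemize}

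Now split on $Q$.
\begin{itemize}
\item If $Q\in\Lambda$, then $J_Q=IR_Q$, so $(J_Q)^{v_{R_Q}}=(IR_Q)^{v_{R_Q}}=I_Q$, using again that $I$ is divisorial.
\item If $Q\notin\Lambda$, then $Q\notin V(I)$, so $I_Q=R_Q$, and also $J_Q=R_Q$.
\end{itemize}
In both cases the localizations agree, which gives $I=J^{v_R}$.

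The only delicate point is the claim $I(P)_Q=R_Q$ for $Q\ne P$. It rests on $I(P)$ having no height-one prime other than $P$ in its support. This follows from $I(P)=IR_P\cap R$ being the contraction of a $PR_P$-primary ideal (since $\dim R_P=1$), so that its radical is $P$. Everything else is routine bookkeeping with localization of colons, which the paper has already recorded.
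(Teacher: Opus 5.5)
Your proof is correct and follows the paper's route. You get the first equality from Propositions~\ref{2.2}~(2) and \ref{2.4}~(2), and the second by comparing localizations at every $Q\in\H_1(R)$ and applying Corollary~\ref{2.3}; the only difference is that you write out the local computation $I(P)_Q=R_Q$ for $Q\ne P$ (via $\sqrt{I(P)}=P$), which the paper leaves implicit.
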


\begin{proof}
The first equality follows from Propositions~\ref{2.2} and \ref{2.4}.
For the second, we have $\left( \prod_{P\in \Ass_R R/I} I(P) \right)^{v_R}R_Q=IR_Q$ for every $Q\in\H_1(R)$, so the assertion follows from Corollary~\ref{2.3}.
\end{proof}

\begin{cor}\label{2.8}
Let $R\neq I\in \F^v(R)$. Then, the following assertions hold.
\benu[$(1)$]
\item
We have $\sqrt{I}\in \F^v(R)$ and $\displaystyle \sqrt{I}=\left( \prod_{P\in V(I)\cap \H_1(R)} P \right)^{v_R}$.
\item
If $V(I)\cap\H_1(R)=\{P_1,\ldots,P_n\}\subseteq\H_1^{\rm reg}(R)$, then $\displaystyle I=\left( P_1^{m_1}P_2^{m_2}\cdots P_n^{m_n} \right)^{v_R}$ for some positive integers $m_i$.
\enenu

\end{cor}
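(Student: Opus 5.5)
Both parts will be proved by comparing localizations at height-one primes, using Corollary~\ref{2.3}. That corollary says two divisorial fractional ideals coincide as soon as they agree after localizing at every $Q\in\H_1(R)$. Since $v$-closure commutes with localization, in each part I will check the right-hand side locally at each $Q\in\H_1(R)$. I will split into the cases $Q\in V(I)$ and $Q\notin V(I)$, recalling from Proposition~\ref{2.4}(2) that $\Ass_R R/I=V(I)\cap\H_1(R)$ for $I\in\F^v(R)$.

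For (1), set $J=\left(\prod_{P\in V(I)\cap \H_1(R)}P\right)^{v_R}$; it is divisorial by definition. Localize at $Q\in\H_1(R)$.
\begin{itemize}
\item If $Q\notin V(I)$, then $IR_Q=R_Q$. So $\sqrt I R_Q=R_Q$, and every factor $P_Q$ with $P\neq Q$ equals $R_Q$, because distinct height-one primes are incomparable. Hence $J_Q=R_Q$.
\item If $Q\in V(I)$, the only factor surviving in $J_Q$ is $QR_Q$. Since $R_Q$ is one-dimensional local, $QR_Q$ is the maximal ideal and is divisorial by the proof of Proposition~\ref{2.4}(1); thus $J_Q=QR_Q$.
\end{itemize}
On the other side, $\sqrt I R_Q=\sqrt{IR_Q}=QR_Q$ when $Q\in V(I)$, since $IR_Q$ is $QR_Q$-primary. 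So $J$ and $\sqrt I$ agree at every height-one prime.

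It remains to show $\sqrt I$ is divisorial, so that Corollary~\ref{2.3} applies. By Proposition~\ref{2.4}(2), the minimal primes of $I$ are exactly $V(I)\cap\H_1(R)$, so $\sqrt I=\bigcap_{P\in V(I)\cap\H_1(R)}P$. This is a finite intersection of divisorial ideals by Proposition~\ref{2.4}(1), hence divisorial by Remark~\ref{2.5}(1). Then Corollary~\ref{2.3} gives $\sqrt I=J$.

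For (2), each $R_{P_i}$ is a DVR, so $IR_{P_i}=P_i^{m_i}R_{P_i}$ for some $m_i\ge 1$; the exponent is positive because $I\subseteq P_i$. Set $J=(P_1^{m_1}\cdots P_n^{m_n})^{v_R}$ and localize at $Q\in\H_1(R)$.
\begin{itemize}
\item If $Q=P_i$, then $J_Q=(P_i^{m_i}R_{P_i})^{v}=P_i^{m_i}R_{P_i}$, since every nonzero ideal of a DVR is principal and hence divisorial. This equals $IR_Q$.
\item If $Q\notin\{P_1,\ldots,P_n\}$, then $Q\notin V(I)$, so $J_Q=R_Q=IR_Q$.
\end{itemize}
Both $I$ and $J$ are divisorial, so Corollary~\ref{2.3} gives $I=J$.

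The only point needing care is the divisoriality of $\sqrt I$. It relies on identifying the minimal primes of $I$ with $V(I)\cap\H_1(R)$ via Proposition~\ref{2.4}(2), which is available because $I$ is divisorial. The remaining steps are routine localization.
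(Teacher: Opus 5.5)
Your proof is correct and follows essentially the paper's route. For (2) it is the same localization argument closed by Corollary~\ref{2.3}. For (1) the paper likewise gets divisoriality of $\sqrt{I}=\bigcap_{P\in V(I)\cap\H_1(R)}P$ from Remark~\ref{2.5}(1), then cites Proposition~\ref{2.7} applied to $\sqrt{I}$ (where $\sqrt{I}(P)=P$); the proof of that proposition is exactly the height-one localization comparison you carry out directly.
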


\begin{proof}
$(1)$ This follows from Remark~\ref{2.5} $(1)$ and Proposition~\ref{2.7}.

$(2)$ For each $i$, the ring $R_{P_i}$ is a DVR, so we can write $IR_{P_i}=P_i^{m_i}R_{P_i}$ $(m_i>0)$.
Hence $\left( P_1^{m_1}P_2^{m_2}\cdots P_n^{m_n} \right)^{v_R}R_{P}=IR_P$ for every $P\in \H_1(R)$, and Corollary~\ref{2.3} gives the desired equality.
\end{proof}

\section{Strict closedness of finite birational extensions}\label{sec:overrings}

In this section, we study the finite birational extensions
$A\in\B^v(R)$, which play an important role in the description
of the $v$-class semigroup.
We first show that every such $A$ is a Noetherian domain
satisfying $(S_2)$.
We then prove that $A$ is strictly closed whenever $R$ is.

\begin{lem}[{cf.\ \cite{SP1}}]\label{2.9}
Every $X\in \bsF^v(R)$ satisfies $(S_2)$ as an $R$-module.
\end{lem}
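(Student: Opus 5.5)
We aim to show that for every $P\in\Spec R$ with $X_P\neq 0$ (which is every $P$, since $X$ contains a nonzero element), $\depth_{R_P} X_P\ge \min\{2,\dim R_P\}$. Since $(X^{v_R})_P=(X_P)^{v_{R_P}}$, the localization $X_P$ is again divisorial over $R_P$. Moreover $R_P$ is a Noetherian domain satisfying $(S_2)$. So we may replace $R$ by $R_P$ and assume $(R,\m)$ is local. It then suffices to prove $\depth X\ge\min\{2,\dim R\}$.

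If $\dim R\le 1$ there is nothing beyond $\depth X\ge 1$ to check. This holds because $X$ is a nonzero torsion-free module over a domain that is not a field, so any $0\ne a\in\m$ is $X$-regular.

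Assume $\dim R\ge 2$. Fix $0\ne a\in\m$; it is $X$-regular, and we must show $\m\notin\Ass_R X/aX$.
\begin{itemize}
\item The key observation is that $aX$ is again divisorial, because $(aX)^{v_R}=aX^{v_R}$.
\item Choose $0\neq d\in R$ with $dX\subseteq R$. Then $dX$ and $daX$ lie in $\F^v(R)$, and multiplication by $d$ gives an isomorphism $X/aX\cong dX/daX$.
\item The inclusion $dX/daX\hookrightarrow R/daX$ gives $\Ass_R(X/aX)\subseteq\Ass_R R/daX$.
\item If $daX=R$, then $dX=R$ is impossible since $a\in\m$. In fact $daX\subseteq \m\, dX\subseteq \m$, so $daX\ne R$.
\item By Proposition~\ref{2.4}(2), $\Ass_R R/daX\subseteq \H_1(R)$.
\item Since $\height\m=\dim R\ge 2$, we conclude $\m\notin\Ass_R R/daX$, hence $\m\notin\Ass_R(X/aX)$.
\end{itemize}
Thus $\depth X/aX\ge1$, and therefore $\depth X\ge 2$.

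The argument is essentially routine. The only point needing care is the reduction that realizes $X/aX$ as a submodule of $R/J$ with $J\in\F^v(R)$, $J\neq R$, so that Proposition~\ref{2.4}(2) applies. One must also check that localization preserves divisoriality, which uses finite generation of $X$ so that colons commute with localization. I expect no genuine obstacle beyond this bookkeeping.
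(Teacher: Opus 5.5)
Your proof is correct. It uses the same key input as the paper, Proposition~\ref{2.4}(2): for $J\in\F^v(R)$, every associated prime of $R/J$ has height one. The difference is in how you extract the depth bound.

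The paper clears denominators to get $I\subseteq R$. It applies Proposition~\ref{2.4} to $I$ itself, which gives $\depth_{R_P}(R/I)_P\ge 1$ whenever $\height_R P\ge 2$ and $(R/I)_P\ne 0$. It then applies the depth lemma to $0\to I_P\to R_P\to (R/I)_P\to 0$, using $\depth R_P\ge 2$ explicitly.

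You instead observe that the multiple $daX$ is again divisorial, and embed $X/aX\cong dX/daX$ into $R/daX$. A single application of Proposition~\ref{2.4} to $daX$ then shows directly that the nonzero element $a$ extends to an $X$-regular sequence of length two.

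This buys a few things. You avoid the depth lemma and the paper's case distinctions ($I=R$, and $(R/I)_P=0$). You never invoke $\depth R_P\ge 2$ separately, since the $(S_2)$ hypothesis is already built into Proposition~\ref{2.4} through Lemma~\ref{2.1}. The only extra ingredient you need is the trivial identity $(aX)^{v_R}=aX^{v_R}$.
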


\begin{proof}
By clearing denominators, we may assume that $X=I\subseteq R$.
The case $I=R$ is clear, so suppose that $I\subsetneq R$.
Let $P\in\Spec R$. If $\height_R P\le1$, the required depth inequality follows from torsionfreeness.
Suppose that $\height_R P\ge2$. If $(R/I)_P=0$, then $I_P=R_P$ and there is nothing to prove.
Otherwise, Proposition~\ref{2.4} gives $P\notin\Ass_R(R/I)$, so $\depth_{R_P}(R/I)_P\ge1$.
Combining $\depth R_P\ge2$ with the short exact sequence $0\to I_P\to R_P\to(R/I)_P\to0$, we obtain $\depth_{R_P}I_P\ge2$.
\end{proof}

\begin{prop}\label{2.10}
Every $A\in \B^v(R)$ is a Noetherian domain satisfying $(S_2)$ as a ring.

\end{prop}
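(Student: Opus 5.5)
Since $A$ is a subring of $K$ containing $R$, it is a domain. It is finitely generated as an $R$-module because $A\in\bsF(R)$, and hence it is Noetherian: it is a finite $R$-algebra, so in particular a finitely generated algebra over a Noetherian ring. The real content is therefore the $(S_2)$ condition for $A$ as a ring. The plan is to transfer the module-theoretic $(S_2)$ property from Lemma~\ref{2.9} to the ring $A$, using that $A$ is finite over $R$. By Lemma~\ref{2.9}, $A$ satisfies $(S_2)$ as an $R$-module: for every $P\in\Spec R$,
\[
\depth_{R_P}A_P\ge\min\{2,\dim R_P\}.
\]

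Fix $Q\in\Spec A$ and set $P=Q\cap R$. I want to show $\depth A_Q\ge\min\{2,\height_A Q\}$. If $\height_A Q\le 1$, this holds because $A$ is a domain. So assume $\height_A Q\ge 2$. Since $A$ is integral over $R$, going-up and incomparability give $\height_A Q\le\height_R P$. Hence $\height_R P\ge 2$, and Lemma~\ref{2.9} yields $\depth_{R_P}A_P\ge 2$.

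Next, $A_P$ is a finite $R_P$-algebra, and its maximal ideals are the primes of $A$ lying over $P$. For a finite module over a semilocal ring obtained as a finite extension, depth behaves well: for the $R_P$-module $A_P$, the $PR_P$-depth equals the $PA_P$-depth of $A_P$ as an $A_P$-module. This is because depth of a finitely generated module with respect to an ideal $J$ depends only on $\sqrt J$ and on the module, so one can compute it via Koszul homology or Ext using generators of $P$. Moreover, $\sqrt{PA_P}$ is the Jacobson radical of $A_P$, and $\operatorname{grade}(\mathrm{Jac}(A_P),A_P)=\min_{Q'}\depth A_{Q'}$, the minimum taken over the maximal ideals $Q'$ of $A_P$. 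Therefore
\[
\depth A_Q\ge\depth_{R_P}A_P\ge 2,
\]
which is exactly what is required.

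The step most likely to cause trouble is this final comparison of depths between $R_P$ and the localizations of $A$ at primes over $P$, namely the identity that the grade of the Jacobson radical is the minimum of the local depths, together with the fact that $Q$ is maximal in $A_P$. The maximality follows from incomparability. For the grade identity, I would argue directly: a regular sequence in $PR_P$ of length $2$ on $A_P$ remains $A_Q$-regular after localization, since its image lies in $QA_Q$ and localization is flat and $A_Q/(x_1,x_2)A_Q\neq0$. This elementary route avoids the general identity.
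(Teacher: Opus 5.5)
Your proposal is correct and follows essentially the same route as the paper: Noetherianity from module-finiteness, $\height_R P\ge\height_A Q\ge 2$ from integrality, Lemma~\ref{2.9} to obtain a length-two $A_P$-regular sequence in $PR_P$, and localization to $A_Q$ to get $\depth A_Q\ge 2$. The detour through the grade of the Jacobson radical of $A_P$ is unnecessary, and the height inequality needs only incomparability, not going-up.
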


\begin{proof}
Finite generation implies that $A$ is Noetherian.
Let $Q\in\Spec A$ and $P=Q\cap R$.
If $\height_A Q\le1$, the required depth inequality follows because $A$ is a domain.
Suppose that $\height_A Q\ge2$. Since $R\subseteq A$ is integral, we have $\height_R P\ge\height_A Q\ge2$.
By Lemma~\ref{2.9}, we have $\depth_{R_P}A_P\ge2$, so there exists an $A_P$-regular sequence $x,y$ of length two in $PR_P$.
Localization to $A_Q$ preserves injectivity, and $x,y$ belong to $QA_Q$.
Thus, they form a regular sequence on $A_Q$, so that $\depth A_Q\ge2$.
Therefore, $A$ satisfies $(S_2)$ as a ring.
\end{proof}

\begin{rem}\label{overring-reflexive}
If $A\in\B^v(R)$, then $\bsF^v(A)\subseteq\bsF^v(R)$.
Indeed, for $L\in\bsF^v(A)$, we have
\[
L=A:(A:L)=R:\bigl((R:A)(A:L)\bigr),
\]
so $L$ is $R$-divisorial.
\end{rem}

\begin{lem}\label{3.13}
For $A\in\B^v(R)$ and $Q\in\H_1(A)$, we have $P=Q\cap R\in \H_1(R)$.
\end{lem}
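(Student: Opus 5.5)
We must show that $P=Q\cap R$ has height one. Since $A$ is integral over $R$, lying over and incomparability apply. They give $\height_R P\ge \height_A Q=1$, and in particular $P\neq 0$. The real content is the reverse inequality $\height_R P\le 1$. Note that going down is not available, since $R$ need not be normal. So we cannot simply lift a chain below $P$, and will instead use the depth information of Lemma~\ref{2.9}.

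We argue by contradiction and suppose $\height_R P\ge 2$. By Lemma~\ref{2.9}, $A$ is divisorial over $R$ and hence satisfies $(S_2)$ as an $R$-module. Thus $\depth_{R_P}A_P\ge \min(2,\dim A_P)$, and $\dim A_P=\dim R_P\ge 2$ because $A_P$ is a finite birational extension of $R_P$. Hence we have $\depth_{R_P}A_P\ge2$, and there is an $A_P$-regular sequence $x,y\in PR_P$.

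Next we pass from $A_P$ to $A_Q$. The ring $A_Q$ is a localization of $A_P$, since $R\setminus P\subseteq A\setminus Q$. Localization is flat, so $x,y$ remain a regular sequence on $A_Q$; here we need $A_Q/(x,y)A_Q\neq 0$, which holds because $x,y\in PA_Q\subseteq QA_Q$. This is exactly the argument already used in the proof of Proposition~\ref{2.10}. It yields $\depth A_Q\ge 2$. On the other hand, $A_Q$ is a local domain of dimension $\height_A Q=1$, so $\depth A_Q\le \dim A_Q=1$. This contradiction shows $\height_R P\le 1$, and therefore $P\in\H_1(R)$.

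The main point to verify carefully is the regular-sequence step: the sequence $x,y$ is regular on $A_P$ viewed as an $R_P$-module, and we need it to remain regular on the ring $A_Q$. Regularity of each element is preserved by the flat localization $A_P\to A_Q$. The only further requirement is that the quotient be nonzero, and this is what the inclusion $P\subseteq Q$ supplies.
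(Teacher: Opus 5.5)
Your proof is correct and follows the paper's argument: assuming $\height_R P\ge 2$, Lemma~\ref{2.9} gives an $A_P$-regular sequence of length two in $PR_P$, which stays regular on the localization $A_Q$ and contradicts $\dim A_Q=1$. Your use of incomparability to get $\height_R P\ge\height_A Q=1$ is a slightly more explicit form of the paper's remark that $P\ne(0)$ by integrality.
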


\begin{proof}
Since $A$ is integral over $R$, we have $P\neq (0)$.
If $\height_R P\ge2$, the argument in the proof of Proposition~\ref{2.10} provides a regular sequence of length two on $A_P$ that remains regular on $A_Q$.
This contradicts $\dim A_Q=1$. Hence $\height_R P=1$.
\end{proof}

We then obtain the following.

\begin{thm}\label{3.12}
If $R=R^*$, then $A=A^*$ for every $A\in\B^v(R)$.

\end{thm}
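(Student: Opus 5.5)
By Proposition~\ref{2.10}, $A$ is a Noetherian domain satisfying $(S_2)$. Theorem~\ref{3.9} therefore reduces the claim to a local one: it suffices to show that $A_Q$ is Arf for every $Q\in\H_1(A)$.

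Fix such a $Q$ and set $P=Q\cap R$. By Lemma~\ref{3.13}, $P\in\H_1(R)$. Since $R=R^*$, Theorem~\ref{3.9} shows that $R_P$ is a one-dimensional Arf local domain.

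The key intermediate step is to move $A$ to the local ring $R_P$. Localizing at the multiplicative set $R\setminus P$ gives $A_P\in\B(R_P)$, and divisoriality is preserved because colons of finitely generated fractional ideals commute with localization:
\[
(A_P)^{v_{R_P}}=(A^{v_R})_P=A_P .
\]
Hence $A_P\in\B^v(R_P)$. Corollary~\ref{3.6} then says that $A_P$ is Arf.

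Here $A_P$ is a one-dimensional Cohen-Macaulay semi-local domain. It is finite over $R_P$, every maximal ideal of $A_P$ lies over $PR_P$, and so every localization at a maximal ideal has dimension one. In other words, $A_P$ satisfies the standing assumptions of Section~\ref{sec:arf}.

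It remains to descend from the semi-local ring $A_P$ to its localization $A_Q$. Since $Q\cap(R\setminus P)=\emptyset$, the ideal $QA_P$ is a maximal ideal of $A_P$, and $A_Q=(A_P)_{QA_P}$. The cleanest route uses Theorem~\ref{3.8}: $A_P$ is Arf, hence $A_P=(A_P)^*$. Strict closedness is preserved under localization for Noetherian rings satisfying $(S_1)$, as noted after Definition~\ref{3.7}, so $A_Q=(A_Q)^*$. Applying Theorem~\ref{3.8} again, now to the one-dimensional local domain $A_Q$, shows that $A_Q$ is Arf. Theorem~\ref{3.9} then gives $A=A^*$.

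I expect the main point requiring care to be this final descent: that the Arf property of a semi-local ring passes to its localizations at maximal ideals. Routing it through strict closedness, via Theorem~\ref{3.8} together with the localization compatibility of $R^*$, avoids verifying the definition directly. The other point needing a line of justification is that divisoriality of $A$ is preserved under localization at $P$, which the colon–localization compatibility handles.
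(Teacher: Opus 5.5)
Your proposal is correct and follows the paper's proof: Proposition~\ref{2.10}, Lemma~\ref{3.13}, Theorem~\ref{3.9}, $A_P\in\B^v(R_P)$, Corollary~\ref{3.6}, descent to $A_Q$, and Theorem~\ref{3.9} again. The one difference is the descent from the semi-local ring $A_P$ to $A_Q$: the paper cites the known fact that the Arf property of a semi-local ring passes to its localizations, while you derive it from Theorem~\ref{3.8} and the localization compatibility of strict closure noted after Definition~\ref{3.7}, which is equally valid and uses only results already recalled in the paper.
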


\begin{proof}
Let $Q\in \H_1(A)$ and set $P=Q\cap R$.
By Lemma~\ref{3.13} and Theorem~\ref{3.9}, the ring $R_P$ is Arf and $A_P\in \B^v(R_P)$.
Hence $A_P$ is Arf by Corollary~\ref{3.6}.
The Arf property is preserved under localization of semi-local rings (\cite[Corollary 2.5]{L}, \cite[Remark 2.9]{IK}), so $A_Q$ is also Arf.
Therefore, we obtain $A=A^*$ by Proposition~\ref{2.10} and Theorem~\ref{3.9}, as desired.
\end{proof}

\section{Clifford \texorpdfstring{$v$}{v}-regularity of strictly closed domains}\label{sec:semigroups}
In this section, we study Clifford $v$-regularity and Boole $v$-regularity of strictly closed domains.

The set $\bsF^v(R)$ of nonzero divisorial fractional ideals is a commutative monoid under the $v$-product $X*Y=(XY)^{v_R}$, with identity $R$.
For $X\in\bsF(R)$, we say that $X$ is \emph{invertible} over $R$ if $X(R:X)=R$.
For $X\in\bsF^v(R)$, we say that $X$ is \emph{$v$-invertible} over $R$ if $(X(R:X))^{v_R}=R$.
Set
\begin{align*}
\Inv(R)&=\{X\in\bsF(R)\mid X(R:X)=R\},\\
\Inv^v(R)&=\{X\in\bsF^v(R)\mid (X(R:X))^{v_R}=R\},\\
\Prin(R)&=\{aR\mid a\in K^\times\}.
\end{align*}
Invertible fractional ideals are divisorial, and $\Prin(R)\subseteq\Inv(R)\subseteq\Inv^v(R)$ are inclusions of abelian groups under the $v$-product on $\bsF^v(R)$.
We set $\Cl^v(R)=\Inv^v(R)/\Prin(R)$ and call it the \emph{$v$-class group} of $R$.

Let $\calS^v(R)$ be the set of isomorphism classes of the elements of $\bsF^v(R)$.
Since every isomorphism of fractional ideals is multiplication by an element of $K^\times$, the set $\calS^v(R)$ is a commutative monoid under the product $[X][Y]=[(XY)^{v_R}]$.
This is the \emph{$v$-class semigroup} of $R$.
We identify $\Cl^v(R)$ naturally with the group of units of $\calS^v(R)$ and use $[X]$ to denote the class of $X$ in both cases.
This terminology agrees with that of the $\star$-class semigroup in the case $\star=v$ in \cite{GP}.
Since $t=v$ for Noetherian rings, these are also the $t$-class semigroup and $t$-class group considered in \cite{KM,KMN}.

A commutative semigroup $S$ is called \emph{Clifford} if, for every $x\in S$, there exists $y\in S$ such that $x=x^2y$.
Set $\operatorname{Id}(S)=\{e\in S\mid e^2=e\}$ and
\[
G_e=(eS)^\times
=\{x\in S\mid ex=x, \ ey=y,\ xy=e \text{ for some $y\in S$}\}.
\]
This is the maximal subgroup with identity $e$.
The semigroup $S$ is Clifford if and only if $S=\bigsqcup_{e\in\operatorname{Id}(S)}G_e$.
Also, $S$ is called \emph{Boolean} if $x^2=x$ for every $x\in S$.
This is equivalent to the requirement that $S$ be Clifford and every $G_e$ be a trivial group.
For this terminology concerning class semigroups, see also \cite[\S1]{KM,GP}.

For a domain $R$, we say that $R$ is \emph{Clifford $v$-regular} (resp.\ \emph{Boole $v$-regular}) if $\calS^v(R)$ is Clifford (resp.\ Boolean).
Since isomorphisms between fractional ideals are given by multiplication by nonzero elements of the field of fractions, we can restate these properties as follows.

\benu[$(1)$]
\item
The domain $R$ is Clifford $v$-regular if and only if, for every $I\in \F^v(R)$, there exists $J\in \bsF^v(R)$ such that $I=(I^2J)^{v_R}$.
\item
The domain $R$ is Boole $v$-regular if and only if, for every $I\in \F^v(R)$, there exists $\alpha\in K^{\times}$ such that $I=(\alpha I^2)^{v_R}$.
\enenu

\Needspace{8\baselineskip}
We then have the following theorem.

\begin{thm}\label{5.5}
The following conditions are equivalent.
\benu[$(1)$]
\item
$R=R^*$.
\item
$R$ is Clifford $v$-regular.
\item
For every $I\in \F^v(R)$, setting $A=I:I$ gives $I\in \Inv^v(A)$.
\enenu
When these conditions hold, setting $\calS_A^v(R)=\{[I]\in\calS^v(R)\mid I:I=A\}$ for each $A\in\B^v(R)$, we have
\[
\calS^v(R)=\bigsqcup_{A\in\B^v(R)}\calS_A^v(R) \quad \text{and}
\quad \calS_A^v(R)\cong\Cl^v(A),
\]
where the latter is a natural isomorphism of abelian groups, and the identity of the corresponding constituent group is $[A]$.
\end{thm}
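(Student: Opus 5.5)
The plan is to reduce everything to height-one localizations, using Theorem~\ref{3.9}, Theorem~\ref{3.4} and Corollary~\ref{2.3}, and then argue cycle-wise: $(1)\Rightarrow(3)\Rightarrow(2)\Rightarrow(1)$. The structure statement will then follow from $(3)$.

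For $(1)\Rightarrow(3)$, let $I\in\F^v(R)$ and set $A=I:I$. By Remark~\ref{2.5}, $A$ is divisorial, so $A\in\B^v(R)$. By Proposition~\ref{2.10} and Theorem~\ref{3.12}, $A$ is a strictly closed $(S_2)$ domain, and $I\in\bsF^v(A)$, since $A:(A:I)\subseteq R:((R:A)(A:I))\subseteq R:(R:I)$. We must show $(I(A:I))^{v_A}=A$. By Corollary~\ref{2.3} applied over $A$, it suffices to check this after localizing at each $Q\in\H_1(A)$.

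By Lemma~\ref{3.13}, $P=Q\cap R\in\H_1(R)$, and $R_P$ is Arf by Theorem~\ref{3.9}. Now $I_P$ is a divisorial ideal of $R_P$, so Theorem~\ref{3.4}(2) and its proof give $I_P=aA'$ with $A'=I_P:I_P=A_P$. Thus $I_P$ is principal over $A_P$, hence invertible over $A_P$, and localizing further gives $I_Q=aA_Q$. Therefore $(I(A:I))A_Q=A_Q$ for all $Q\in\H_1(A)$, and hence $(I(A:I))^{v_A}=A$.

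For $(3)\Rightarrow(2)$, take $J=A:I$. Then $(I^2J)^{v_A}=I\,*_A\,(I*_A J)=I$. We need to pass from $v_A$ to $v_R$. By Remark~\ref{overring-reflexive}, $I$ is $R$-divisorial, and for $X\in\bsF(A)$ one has $X^{v_R}\subseteq X^{v_A}$, because $R:X\supseteq(R:A)(A:X)$ forces $R:(R:X)\subseteq A:(A:X)$. Hence
\[
I\subseteq (I^2J)^{v_R}\subseteq (I^2J)^{v_A}=I,
\]
where the first inclusion uses $IJ\ni 1$ or $IJ\supseteq$ something containing $I$. If the first inclusion is delicate, I will instead check equality locally at $P\in\H_1(R)$ via Proposition~\ref{2.2}(1): there $I_P$ is principal over $A_P$ (it is $v$-invertible over the one-dimensional ring $A_P$, whose divisorial ideals are reflexive).

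For $(2)\Rightarrow(1)$, localize. Clifford $v$-regularity passes to $R_P$ for $P\in\H_1(R)$, because every divisorial ideal of $R_P$ is the localization of a divisorial ideal of $R$: take $(I\cap R)^{v_R}$ and use Proposition~\ref{2.2}. In $R_P$, write $I=(I^2J)^{v}$. Then with $A=I:I$ one gets $I(A:I)\cdots$; the aim is to show $I\cong\tr(I)$ and apply Theorem~\ref{3.4}(3). From $I=(I^2J)^v$ one gets $I:I=(IJ)^v:(IJ)^v$, and $e=(IJ)^v$ is an idempotent divisorial ideal with $I\cong Ie$. The key point is that idempotent divisorial ideals in a one-dimensional local ring are isomorphic to their own endomorphism rings, $e\cong e:e$, and that $I\cong\tr(I)$ follows from this. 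I expect this step to be the \textbf{main obstacle}, and I would try to prove $e=\tr(e)=R:(e:e)$ directly from $e^v=(e^2)^v$.

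Finally, the decomposition. Under $(3)$, the map $[I]\mapsto I:I$ is well defined on isomorphism classes, so the sets $\calS^v_A(R)$ partition $\calS^v(R)$. On $\calS^v_A(R)$, the $v_R$-product coincides with the $v_A$-product by the comparison above, which gives an injective homomorphism $\calS^v_A(R)\to\Cl^v(A)$. It is surjective because any $X\in\Inv^v(A)$ satisfies $X:X=A$ and lies in $\bsF^v(R)$ by Remark~\ref{overring-reflexive}. The identity of $\calS^v_A(R)$ is $[A]$.
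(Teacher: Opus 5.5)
Your $(2)\Rightarrow(1)$ has a genuine gap. After reducing to $R_P$ with $P\in\H_1(R)$, you leave the decisive step as an expectation, and the route you propose cannot work as stated. If $e=(IJ)^{v}$ is $v$-idempotent and $e\cong e:e=A$, say $e=\alpha A$, then $e=(e^2)^{v}=\alpha^2A$ forces $\alpha\in A^\times$, so $e=A$. But $\tr(e)=A(R:A)=R:A$ and $R:(e:e)=R:A$, so the identity $e=\tr(e)=R:(e:e)$ you plan to prove is false whenever $I:I\neq R$. You also never say how $I\cong\tr(I)$ would follow from $e\cong e:e$.

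The missing idea is the colon computation in Proposition~\ref{5.4}, $(1)\Rightarrow(3)$. By Lemma~\ref{5.2}, Clifford regularity gives $I=(I^2(A:I))^{v_R}$. Since $I$ is divisorial, $uX\subseteq I$ if and only if $uX^{v_R}\subseteq I$, so
\[
A=I:I=I:I^2(A:I)=(I:I):I(A:I)=A:I(A:I),
\]
and hence $(I(A:I))^{v_A}=A$. In the one-dimensional local ring $R_P$, the ring $A_P$ is semi-local with height-one, hence $v_{A_P}$-closed, maximal ideals. Lemma~\ref{5.3} then upgrades this to $I_P(A_P:I_P)=A_P$, so $I_P=aA_P$ and $I_P^2=aI_P$. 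Theorem~\ref{3.4} makes $R_P$ Arf, and Theorem~\ref{3.9} gives $R=R^*$.

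Your $e$-route can be completed the same way. The trick applied to $e$ gives $e:e=e:e^2=A:e$, so $e^{v_A}=A$ and hence $e=A$ locally. Your own inclusion $M^{v_R}\subseteq M^{v_A}=M$ for maximal ideals $M$ of $A_P$ then gives $I(A:I)=A$.

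There is also a faulty step in $(1)\Rightarrow(3)$. The equality $A:(A:I)=R:((R:A)(A:I))$ is fine. However, from $(R:A)(A:I)\subseteq R:I$ you only get $R:(R:I)\subseteq R:((R:A)(A:I))$, that is, $I^{v_R}\subseteq I^{v_A}$, which is the inclusion you yourself prove in $(3)\Rightarrow(2)$, not the one you need. The reverse fails without strict closedness: for $R=k[[t^3,t^4]]$ and $I=\m$ one has $A=k[[t^3,t^4,t^5]]$ and $I^{v_A}=t^3k[[t]]\supsetneq I$.

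Under $R=R^*$ you can repair this in two ways. One is Proposition~\ref{5.10}. The other uses your own local computation: $I_P=aA_P$ for all $P\in\H_1(R)$ gives $(I^{v_A})_P=I_P$, and $I^{v_A}\in\bsF^v(R)$ by Remark~\ref{overring-reflexive}, so Corollary~\ref{2.3} yields $I^{v_A}=I$. Likewise, in the decomposition, the agreement of the $v_R$- and $v_A$-products needs Proposition~\ref{5.10} or the same local argument, not just the one-sided inclusion.

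The rest matches the paper's argument:
\begin{itemize}
\item the local part of $(1)\Rightarrow(3)$;
\item your fallback for $(3)\Rightarrow(2)$, which is Proposition~\ref{5.4}, $(3)\Rightarrow(2)\Rightarrow(1)$ (local principality there comes from Lemma~\ref{5.3} plus semi-locality of $A_P$, not from reflexivity);
\item the surjectivity argument for $\calS^v_A(R)\cong\Cl^v(A)$.
\end{itemize}
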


\begin{rem}
In \cite[\S1]{GP}, for $I=I^{v_R}$ and $A=I:I$, the ideal $I$ is called {\it $v$-stable} when
\[
\bigl(I(A:I)\bigr)^{v_R}=A.
\]
Here we use the restriction of the $v_R$-operation to $A$, which is not identified a priori with the $v_A$-operation of $A$ itself.
In our setting, Proposition~\ref{5.4} and Theorem~\ref{5.5} show that imposing the displayed condition on every $I\in\F^v(R)$ is equivalent to condition (3) of Theorem~\ref{5.5}.
The relationship between stability and Clifford regularity with respect to general star operations is discussed in \cite[Lemma 1.2, Proposition 1.5]{GP}.
Local criteria for Clifford regularity and the equivalence between invertibility and being principal in the one-dimensional local setting are given in \cite[Propositions 2.5(1), 3.8]{GP2}.
Moreover, in our setting of $(S_2)$ domains, maximal $t$-ideals have height one unless $R$ is a field, so the equivalence of $(2)$ and $(3)$ in Theorem~\ref{5.5} also corresponds to \cite[Theorem 2.2]{KMN}.
For earlier work describing constituent groups as class groups of overrings, see \cite[Theorem 2.1(1)]{KMC}.
That result concerns $t$-class semigroups of Krull-type domains and does not apply directly to the present setting, which includes non-integrally closed rings.
We give a direct proof of Theorem~\ref{5.5} in our setting.
\end{rem}

\begin{lem}[cf.\ {\cite[Lemma 1.2]{GP}}]\label{5.2}
Let $I\in \F^v(R)$ and set $A=I:I$. Then, the following conditions are equivalent.
\benu[$(1)$]
\item
There exists $J\in \bsF^v(R)$ such that $I=(I^2J)^{v_R}$.
\item
$I=(I^2(I:I^2))^{v_R}$.
\item
$I=(I^2(A:I))^{v_R}$.

\enenu
\end{lem}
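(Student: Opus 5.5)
The plan is to prove the cycle $(3)\Rightarrow(2)\Rightarrow(1)\Rightarrow(2)$, with $(1)\Rightarrow(2)$ immediate.

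For $(3)\Rightarrow(2)$, the first step is to show $I:I^2\subseteq A:I$. Let $x\in I:I^2$ and $y\in I$. Then $xyI\subseteq xI^2\subseteq I$, so $xy\in I:I=A$, and hence $x\in A:I$. The reverse inclusion holds because $(A:I)I^2\subseteq AI=I$. Thus $I:I^2=A:I$. Next I would show $A:I$ is $R$-divisorial. We have $A:I=(I:I):I=I:I^2$, and Remark~\ref{2.5}(3) says $X:Y$ is divisorial whenever $X$ is. So $J=A:I$ lies in $\bsF^v(R)$ and satisfies $I=(I^2J)^{v_R}$, which gives $(3)\Leftrightarrow(2)$.

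For $(2)\Rightarrow(1)$, I would rewrite the condition $I=(I^2(A:I))^{v_R}$ using $R$-divisoriality. Since $(I^2(A:I))^{v_R}\subseteq I$, the content is the reverse inclusion. Dividing by $I$ is awkward, so instead I compare $(I(A:I))^{v_R}$ with $A$. The inclusion $I(A:I)\subseteq A$ is clear, and $A$ is divisorial by Remark~\ref{2.5}(3). From $I=(I\cdot I(A:I))^{v_R}$, taking $I:(-)$, I would try to deduce $(I(A:I))^{v_R}\supseteq$ something forcing equality. The cleaner route is to localize at height-one primes. Using $(X^{v_R})_P=(X_P)^{v_{R_P}}$ and Corollary~\ref{2.3}, one checks that condition $(2)$ for all $I$ holds if and only if $R_P$ satisfies the analogous one-dimensional condition for every $P\in\H_1(R)$. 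The localization $I_P$ of divisorial ideals realizes every divisorial ideal of $R_P$, because a divisorial ideal of $R_P$ contracts to $R$ to give a divisorial ideal by Proposition~\ref{2.2}.

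Equivalences $(1)\Leftrightarrow(3)$ and the structure assertion are presumably handled by the later Proposition~\ref{5.4} and Theorem~\ref{5.5}, so here only the equivalence of the three displayed conditions of the lemma is required. This step $(2)\Rightarrow(1)$ needs nothing beyond the purely formal deduction that $(1)$ implies $(2)$: taking $J$ as given, $I^2J\cdot(I:I^2)$ contains $I^2J$ up to the identities above. More precisely, from $I=(I^2J)^{v_R}$ we get $JI^2\subseteq I$, so $J\subseteq I:I^2$, and therefore
\[
I=(I^2J)^{v_R}\subseteq (I^2(I:I^2))^{v_R}\subseteq I^{v_R}=I.
\]
That is the whole of $(1)\Rightarrow(2)$, and $(2)\Rightarrow(1)$ is trivial with $J=I:I^2$, which is divisorial by Remark~\ref{2.5}(3).

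So the main obstacle is only the identity $I:I^2=A:I$, which is the elementary computation given above. No localization is actually needed.
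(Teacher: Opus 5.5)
Once you discard the abandoned localization detour in your second paragraph, your argument is correct and is essentially the paper's proof. The identity $I:I^2=(I:I):I=A:I$ makes $(2)$ and $(3)$ the same condition, $(2)\Rightarrow(1)$ holds with $J=I:I^2$ (divisorial by Remark~\ref{2.5}(3), a point the paper simply calls clear), and $(1)\Rightarrow(2)$ follows from $J\subseteq I:I^2$ and monotonicity of the $v_R$-closure; the middle paragraph (about $(2)$ holding for all $I$ iff a condition holds on each $R_P$, and about contracting divisorial ideals of $R_P$) plays no role and should be deleted.
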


\begin{proof}
Since $I:I^2=(I:I):I=A:I$, we have $(2)\Leftrightarrow (3)$. The implication $(2)\Rightarrow (1)$ is clear.

$(1)\Rightarrow (2):$ Suppose that $I=(I^2J)^{v_R}$ for some $J\in \bsF^v(R)$.
Since $I^2J\subseteq (I^2J)^{v_R}=I$, we have $J\subseteq I:I^2$.
Therefore, $I=(I^2J)^{v_R}\subseteq (I^2(I:I^2))^{v_R}\subseteq I^{v_R}=I$, which yields $I=(I^2(I:I^2))^{v_R}$.
\end{proof}

\begin{lem}[{cf.\ \cite[Proposition 3.8]{GP2}}]\label{5.3}
Let $(R,\fkm)$ be a one-dimensional Noetherian local domain, let $I\in\F^v(R)$, and set $A=I:I$.
Then, the following conditions are equivalent.
\benu[$(1)$]
\item
$(I(A:I))^{v_A}=A$.
\item
$I(A:I)=A$.
\item
There exists $a\in I$ such that $I^2=aI$.
\enenu

\end{lem}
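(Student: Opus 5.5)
We have to prove Lemma~\ref{5.3}. Setting: $(R,\fkm)$ is a one-dimensional Noetherian local domain, $I\in\F^v(R)$ and $A=I:I$. The ring $A$ is a finite birational extension of $R$, hence a one-dimensional Noetherian semi-local domain; all of its maximal ideals have height one.

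The plan is to prove $(3)\Rightarrow(2)\Rightarrow(1)\Rightarrow(3)$.

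\emph{$(3)\Rightarrow(2)$.} Suppose $I^2=aI$ with $a\in I$. Since $I$ contains a nonzero element, $a\neq0$. Then $\frac{1}{a}I\cdot I=I$, so $\frac{1}{a}I\subseteq I:I=A$. Also $1\in A$ gives $a\in I$, so $aA\subseteq I$. Hence $I=aA$, and $A:I=a^{-1}A$, which yields $I(A:I)=A$.

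\emph{$(2)\Rightarrow(1)$.} This is trivial, since $A^{v_A}=A$.

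\emph{$(1)\Rightarrow(3)$.} Set $T=I(A:I)$. This is an ideal of $A$, because $A:I$ is an $A$-module and $T\subseteq A$. We first show $T$ is not contained in any maximal ideal $N$ of $A$. Suppose $T\subseteq N$. Then $T^{v_A}\subseteq N^{v_A}$. Since $A$ is one-dimensional Noetherian, it satisfies $(S_2)$, so Proposition~\ref{2.4}(1) applies to $A$ (it is stated for any such domain) and gives $N^{v_A}=N$. Hence $T^{v_A}\subseteq N\neq A$, contradicting (1). Therefore $T=A$, i.e. $I$ is invertible as an $A$-module.

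Next, an invertible fractional ideal over the semi-local domain $A$ is locally principal, hence principal. Concretely, $A$ has finitely many maximal ideals $N_1,\dots,N_r$, and $IA_{N_i}=x_iA_{N_i}$ for each $i$. By prime avoidance (the Chinese remainder argument applied to $I/N_iI$), choose $a\in I$ with $a\notin N_iI$ for every $i$. Nakayama then gives $aA_{N_i}=IA_{N_i}$ for all $i$, so $I=aA$. Consequently $I^2=a^2A=aI$ with $a\in I$, which is (3).

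The main obstacle is the step showing $T$ avoids every maximal ideal of $A$. It needs the divisoriality of maximal ideals of $A$ over $A$ itself. The fix is to run the argument in the proof of Proposition~\ref{2.4}(1) locally at $N$: from $(N A_N)^{v_{A_N}}=NA_N$ deduce $N^{v_A}\subseteq NA_N\cap A=N$. The semi-local principalization in the last step is standard.
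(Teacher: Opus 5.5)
Your proof is correct and follows essentially the same route as the paper. You obtain $I=aA$ from $I^2=aI$. You use the divisoriality of the height-one maximal ideals of $A$ to force $I(A:I)=A$ from $(I(A:I))^{v_A}=A$. Finally, you principalize an invertible ideal over the semi-local ring $A$. The only differences are that you merge the paper's $(1)\Rightarrow(2)$ and $(2)\Rightarrow(3)$ into one step, and that you spell out the Chinese-remainder/Nakayama argument which the paper compresses into ``locally free of rank one over a semi-local ring, hence free.''
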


\begin{proof}
The implication $(2)\Rightarrow (1)$ is clear.

$(1)\Rightarrow (2): $ Suppose that $(I(A:I))^{v_A}=A$ and $I(A:I)\subsetneq A$.
Choose $M\in \Max A$ such that $I(A: I)\subseteq M$.
Since $\height_A M=1$, we have $M^{v_A}=M$, and hence $A=(I(A:I))^{v_A}\subseteq M^{v_A}=M$, which is a contradiction.
Thus, $I(A:I)=A$.

$(3)\Rightarrow (2):$ If $I^2=aI$, then $A=\frac{I}{a}$.
Hence, $I=aA$, so $I$ is a principal ideal of $A$, which yields $I(A:I)=A$.

$(2)\Rightarrow (3):$ Since $I$ is invertible over $A$, the module $IA_M$ is a free $A_M$-module of rank one for every $M\in \Max A$.
Since $R$ is local, $A$ is semi-local, so $I$ is a free $A$-module of rank one.
Thus, we can write $I=aA$ for some $a\in I$, which yields $I^2=aI$, as desired.
\end{proof}

\begin{lem}\label{5.a}
For $X\in \bsF^v(R)$, the following conditions are equivalent.

\benu[$(1)$]
\item
$(X(R:X))^{v_R}=R$.
\item
$X:X=R$.
\item
For every $P\in \H_1(R)$, we have $X_P\cong R_P$.

\enenu

\end{lem}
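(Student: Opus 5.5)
The plan is to reduce everything to height-one localizations via Corollary~\ref{2.3} and Proposition~\ref{2.2}, and to use the one-dimensional local fact that a fractional ideal $Y$ of a one-dimensional Noetherian local domain $R_P$ satisfies $Y:Y=R_P$ exactly when $Y\cong R_P$. I would prove $(1)\Rightarrow(2)\Rightarrow(3)\Rightarrow(1)$.

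For $(1)\Rightarrow(2)$, suppose $X*(R:X)=R$. Let $a\in X:X$. Then $aX(R:X)\subseteq X(R:X)$. Taking $v$-closures gives $aR=(aX(R:X))^{v_R}\subseteq (X(R:X))^{v_R}=R$, so $a\in R$. Hence $X:X=R$.

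For $(2)\Rightarrow(3)$, fix $P\in\H_1(R)$. Colons commute with localization, so $X_P:X_P=(X:X)_P=R_P$. Set $Y=X_P$, a fractional ideal of the one-dimensional local domain $(R_P,PR_P)$.
\begin{itemize}
\item Suppose $Y$ is not principal. Then $Y(R_P:Y)\subseteq PR_P$, since otherwise $Y(R_P:Y)=R_P$ and $Y$ would be invertible, hence principal over a local ring.
\item Then $R_P:PR_P\subseteq R_P:Y(R_P:Y)=(R_P:Y):(R_P:Y)$.
\item Because $Y$ is divisorial over $R_P$, so is $R_P:Y$, and $(R_P:Y):(R_P:Y)=Y:Y=R_P$. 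Here I use $Y=R_P:(R_P:Y)$: any $a$ stabilizing $R_P:Y$ stabilizes $R_P:(R_P:Y)$.
\item This gives $R_P:PR_P=R_P$, which contradicts the proof of Proposition~\ref{2.4}(1), where $R_P:PR_P\ne R_P$.
\end{itemize}
Hence $Y\cong R_P$. This step needs $X_P$ divisorial over $R_P$, which follows from $(X^{v_R})_P=(X_P)^{v_{R_P}}$.

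For $(3)\Rightarrow(1)$, if $X_P=aR_P$ then $(R:X)_P=a^{-1}R_P$, so $(X(R:X))_P=R_P$ for every $P\in\H_1(R)$. Both $(X(R:X))^{v_R}$ and $R$ are divisorial, and localization commutes with $v$. Corollary~\ref{2.3} then gives $(X(R:X))^{v_R}=R$.

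The main obstacle is the local step in $(2)\Rightarrow(3)$. The key input is the duality $(R_P:Y):(R_P:Y)=Y:Y$ for divisorial $Y$, together with $R_P:PR_P\neq R_P$. Both are elementary and the latter is already established in the paper.
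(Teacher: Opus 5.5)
Your proof is correct and uses the same ingredients as the paper: localization at height-one primes, Corollary~\ref{2.3}, and the fact that $R_P:PR_P\neq R_P$ (equivalently, that $PR_P$ is divisorial), which together with divisoriality of $X_P$ rules out a non-principal $X_P$. The difference is only organizational: you run the cycle $(1)\Rightarrow(2)\Rightarrow(3)\Rightarrow(1)$, proving $(1)\Rightarrow(2)$ by a direct stabilizer argument, whereas the paper gets $(1)\Leftrightarrow(2)$ at once from the identity $R:X(R:X)=X:X$ and then proves $(1)\Rightarrow(3)$ and $(3)\Rightarrow(2)$ locally.
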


\begin{proof}
$(2)\Leftrightarrow (1):$ Set $Y=X(R:X)$. Since $X$ is divisorial, we have
$R:Y=(R:(R:X)):X=X:X$.
If $X:X=R$, then $Y^{v_R}=R:(R:Y)=R$.
Conversely, if $Y^{v_R}=R$, then $X:X=R:Y=R:Y^{v_R}=R$.

$(1)\Rightarrow (3):$ Let $P\in\H_1(R)$. We have $(X_P(R_P:X_P))^{v_{R_P}}=R_P$.
Since $PR_P$ is divisorial, the proper inclusion $X_P(R_P:X_P)\subsetneq R_P$ would imply $(X_P(R_P:X_P))^{v_{R_P}}\subseteq PR_P$, which is a contradiction.
Hence, $X_P$ is invertible. By the same argument as in the proof of $(2)\Rightarrow (3)$ in Lemma~\ref{5.3}, we obtain that $X_P$ is principal.

$(3)\Rightarrow (2):$ For every $P\in \H_1(R)$, we have $(X:X)_P=X_P:X_P=R_P:R_P=R_P$. By Corollary~\ref{2.3}, we obtain $X:X=R$.
\end{proof}

\begin{prop}[cf.\ {\cite[Proposition 1.5]{GP}}]\label{5.4}
Let $I\in \F^v(R)$ and set $A=I:I$. Then, the following conditions are equivalent.
\benu[$(1)$]
\item
$I=(I^2(A:I))^{v_R}$.
\item
$(I(A:I))^{v_R}=A$.
\item
$(I(A:I))^{v_A}=A$.
\enenu

\end{prop}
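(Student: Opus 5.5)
The plan is to write $Y=I(A:I)$, so $Y\subseteq A$ and $Y$ is an $A$-module. I would prove $(2)\Rightarrow(1)\Rightarrow(3)\Rightarrow(2)$. First note that $A=I:I$ is $R$-divisorial by Remark~\ref{2.5}$(3)$, so $A\in\B^v(R)$. Hence Proposition~\ref{2.10}, Remark~\ref{overring-reflexive} and Lemma~\ref{3.13} are available for $A$.

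\emph{$(2)\Rightarrow(1)$.} I use the standard identity $(XZ)^{v_R}=(XZ^{v_R})^{v_R}$ for $X,Z\in\bsF(R)$. It follows from $R:(XZ)=(R:X):Z=(R:X):Z^{v_R}$, where the last equality holds because $(R:X):Z=R:((R:X)Z)$ and the right side only depends on $Z$ through its $v_R$-closure. Applying this with $X=I$ and $Z=Y$ gives
\[
\bigl(I^2(A:I)\bigr)^{v_R}=(IY)^{v_R}=(IY^{v_R})^{v_R}=(IA)^{v_R}=I^{v_R}=I .
\]

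\emph{$(1)\Rightarrow(3)$.} Since $I$ is divisorial, $I:Z=I:Z^{v_R}$ for every $Z\in\bsF(R)$. Indeed, $I:Z=R:((R:I)Z)$, and $(R:I)Z^{v_R}\subseteq((R:I)Z)^{v_R}$, so both colons agree. Applying this with $Z=IY$ and using (1),
\[
A=I:I=I:(IY)^{v_R}=I:(IY)=(I:I):Y=A:Y .
\]
Therefore $Y^{v_A}=A:(A:Y)=A:A=A$, which is (3).

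\emph{$(3)\Rightarrow(2)$.} This is the step I expect to need care, because the $v_R$- and $v_A$-closures are not directly comparable. I would compare them locally. Let $P\in\H_1(R)$. Then $A_P$ is a finite birational extension of the one-dimensional local domain $R_P$. So $A_P$ is semi-local, and every maximal ideal $M$ of $A_P$ has height one.

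Suppose $Y_P\neq A_P$. Then $Y_P\subseteq M$ for some such $M$. The ideal $M$ contracts to a height-one prime $Q$ of $A$, and $Q$ is $A$-divisorial by Proposition~\ref{2.4}$(1)$ applied to $A$, which is legitimate by Proposition~\ref{2.10}. Since $Y\subseteq Q$, we get $A=Y^{v_A}\subseteq Q^{v_A}=Q$, a contradiction. Hence $Y_P=A_P$ for every $P\in\H_1(R)$, and so $(Y^{v_R})_P=(A_P)^{v_{R_P}}=A_P$.

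Both $Y^{v_R}$ and $A$ are $R$-divisorial, so Corollary~\ref{2.3} yields $Y^{v_R}=A$, which is (2). The only delicate points are that $A_P$ has only height-one maximal ideals and that those ideals are $A$-divisorial. Both follow from integrality of $A$ over $R$ together with Proposition~\ref{2.4} applied to the $(S_2)$ domain $A$.
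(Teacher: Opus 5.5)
Your proof is correct and follows the paper's argument essentially step for step: $(2)\Rightarrow(1)$ and $(1)\Rightarrow(3)$ use the same colon and $v$-closure manipulations, and for $(3)\Rightarrow(2)$ you show $Y_P=A_P$ at every $P\in\H_1(R)$ and conclude by Corollary~\ref{2.3}, as the paper does. The only difference is that you argue this step directly: if $Y_P$ were proper, $Y$ would lie in a height-one prime $Q$ of $A$, and $Q$ is $A$-divisorial. The paper instead localizes and cites Lemma~\ref{5.3}, whose implication $(1)\Rightarrow(2)$ is the same argument.
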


\begin{proof}
$(3)\Rightarrow (2):$ Suppose that $(I(A:I))^{v_A}=A$. For every $P\in \H_1(R)$, we have $(IR_P(A_P:IR_P))^{v_{A_P}}=((I(A:I))^{v_A})_P=A_P$, so Lemma~\ref{5.3} gives $IR_P(A_P:IR_P)=A_P$.
Also, $((I(A:I))^{v_R})_P=(IR_P(A_P:IR_P))^{v_{R_P}}=(A_P)^{v_{R_P}}=A_P$, and hence
$$((I(A:I))^{v_A})_P=A_P=((I(A:I))^{v_R})_P.$$
Since $(I(A:I))^{v_R}, (I(A:I))^{v_A}\in \bsF^v(R)$, Corollary~\ref{2.3} yields $(I(A:I))^{v_R}=(I(A:I))^{v_A}=A$.

$(2)\Rightarrow (1):$ Suppose that $(I(A:I))^{v_R}=A$. Computing the product in $\bsF^v(R)$, we obtain $(I^2(A:I))^{v_R}=I*(I(A:I))^{v_R}=I*A=(IA)^{v_R}=I^{v_R}=I$.

$(1)\Rightarrow (3):$ Set $X=I^2(A:I)$. Then $I:X=I:X^{v_R}$.
Indeed, $uX\subseteq I$ implies $uX^{v_R}\subseteq I^{v_R}=I$. Since $I=X^{v_R}$, we obtain
\[
\begin{aligned}
A&= I:X^{v_R}\\
& = I:X\\
& =I:I^2(A:I)\\
& =(I:I):I(A:I)\\
& =A:I(A:I),
\end{aligned}
\]
so that $A=A:A=A:(A:I(A:I))=(I(A:I))^{v_A}$, as desired.
\end{proof}

\begin{prop}\label{5.10}
Suppose that $R=R^*$, and let $A\in \B^v(R)$. Then $X^{v_A}=X^{v_R}$ for every $X\in \bsF(A)$.

\end{prop}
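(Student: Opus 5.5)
The plan is to reduce to the one-dimensional case by localizing at height-one primes, and there to use the fact that over an Arf ring the conductor $R:A$ is a principal ideal of $A$.

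\emph{Reduction.} Let $X\in\bsF(A)$. By Proposition~\ref{2.10}, $A$ is a Noetherian domain satisfying $(S_2)$. The ideal $X^{v_A}$ lies in $\bsF^v(A)$, so it lies in $\bsF^v(R)$ by Remark~\ref{overring-reflexive}. Also $X^{v_R}\in\bsF^v(R)$ by Remark~\ref{2.5}. By Corollary~\ref{2.3}, it therefore suffices to show
\[
(X^{v_A})_P=(X^{v_R})_P \quad\text{for every } P\in\H_1(R).
\]
Colons of finitely generated modules commute with localization. Hence $(X^{v_A})_P=A_P:(A_P:X_P)$ and $(X^{v_R})_P=R_P:(R_P:X_P)$. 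Here $R_P$ is Arf by Theorem~\ref{3.9}, and $A_P\in\B^v(R_P)$.

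\emph{Local case.} We may now assume that $R$ is a one-dimensional Arf local domain and that $A\in\B^v(R)$. Set $\fkc=R:A$. Choose a nonzero $d\in R$ with $dA\subseteq R$ and put $I=dA\in\F^v(R)$. Theorem~\ref{3.3} gives $I\cong R:I=d^{-1}\fkc$, so $A\cong\fkc$. Thus $\fkc=aA$ for some $a\in K^\times$.

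For any $A$-submodule $Y\in\bsF(R)$ we have $R:Y=R:AY=(R:A):Y=\fkc:Y$. Applying this to $Y=X$ gives
\[
R:X=\fkc:X=aA:X=a(A:X).
\]
Applying it again to $Y=A:X$, which is an $A$-module, and using $a^{-1}\fkc=A$, we get
\[
R:(R:X)=R:a(A:X)=a^{-1}\bigl(\fkc:(A:X)\bigr)=A:(A:X).
\]
This is the desired equality $X^{v_R}=X^{v_A}$ in the local case, and it finishes the proof.

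\emph{Main obstacle.} The key step is the identification $R:A\cong A$. This fails globally in general, which is why the localization at height-one primes is needed. Locally it follows from Theorem~\ref{3.3}, applied to $dA$, in the semi-local Arf ring $R_P$.
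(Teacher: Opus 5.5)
Your proposal is correct and follows essentially the same route as the paper: you reduce via Corollary~\ref{2.3} and Theorem~\ref{3.9} to a one-dimensional Arf local domain, use Theorem~\ref{3.3} to obtain $R:A\cong A$, and then carry out the same colon computation (the paper writes $A=\alpha(R:A)$ and rewrites $A:(A:X)$ step by step as $R:(R:X)$). Your appeal to Remark~\ref{overring-reflexive} for $X^{v_A}\in\bsF^v(R)$ spells out a step that the paper states without citation.
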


\begin{proof}
Since $X^{v_R}, X^{v_A}\in \bsF^v(R)$, we may, by Corollary~\ref{2.3} and Theorem~\ref{3.9}, localize at $P\in \H_1(R)$ and assume that $(R, \m)$ is an Arf local domain of dimension one.
By Theorem~\ref{3.3}, we have $R:A\cong R:(R:A)=A$, so we can write $A=\alpha (R:A)$ $(\alpha \in K^{\times})$.
Hence
\[
\begin{aligned}
X^{v_A} & =A:(A:X)\\
& =\alpha(R:A):(A:X)\\
& =\alpha[(R:A):(A:X)]\\
& =\alpha[R:A(A:X)]\\
& =\alpha[R:(A:X)]\\
& =\alpha[R:(\alpha(R:A):X)]\\
& =\alpha[R:\alpha[(R:A):X]]\\
& =R:((R:A):X)\\
& =R:(R:AX)\\
& =R:(R:X)\\
& =X^{v_R}.
\end{aligned}
\]
\end{proof}

\begin{proof}[Proof of Theorem $\ref{5.5}$]

$(3)\Rightarrow (2):$ This follows from Lemma~\ref{5.2} and Proposition~\ref{5.4}.

$(2)\Rightarrow (1):$ Let $P\in \H_1(R)$. Choose $\fka\in \F^v(R_P)$ and write $\fka=(f_1, f_2, \cdots, f_n)$ $(f_i\in R_P)$.
Set $X=\sum_{i=1}^n Rf_i$. Then $X\in \bsF(R)$ and $[X^{v_R}]_P=\fka^{v_{R_P}}=\fka$.
Choose $0\neq a\in R$ such that $aX^{v_R}\subseteq R$, and set $I=aX^{v_R}$.
For $A=I:I$, assumption $(2)$, Lemma~\ref{5.2}, and Proposition~\ref{5.4} give $(I(A:I))^{v_A}=A$.
Noting that
$$A=aX^{v_R}:aX^{v_R}=X^{v_R}:X^{v_R},$$
$$A=(I(A:I))^{v_A}=(aX^{v_R}(A:aX^{v_R}))^{v_A}=(X^{v_R}(A:X^{v_R}))^{v_A},$$
we obtain $A_P=[X^{v_R}]_P:[X^{v_R}]_P=\fka:\fka$ and $A_P=([X^{v_R}]_P(A_P:[X^{v_R}]_P))^{v_{A_P}}=(\fka(A_P:\fka))^{v_{A_P}}$.
By Lemma~\ref{5.3}, there exists $\alpha\in\fka$ such that $\fka^2=\alpha \fka$.
Thus, $R_P$ is Arf by Theorem~\ref{3.4}, and Theorem~\ref{3.9} gives $R=R^*$.

$(1)\Rightarrow (3):$ Let $I\in \F^v(R)$ and set $A=I:I$.
For every $P\in \H_1(R)$, Theorem~\ref{3.9} shows that $R_P$ is Arf.
Thus, Theorem~\ref{3.4} gives $a\in IR_P$ such that $(IR_P)^2=aIR_P$.
By Lemma~\ref{5.3}, we have
$$[(I(A:I))^{v_A}]_P=(IR_P(A_P:IR_P))^{v_{A_P}}=A_P.$$
Since $(I(A:I))^{v_A}, A\in \bsF^v(R)$, Corollary~\ref{2.3} gives $(I(A:I))^{v_A}=A$.
The equality $A=I:I$ shows that $I$ is an ideal of $A$, and Proposition~\ref{5.10} gives $I=I^{v_R}=I^{v_A}$.
Hence, $I\in \Inv^v(A)$.

Finally, we prove the decomposition into constituent groups.
For every $X\in\bsF^v(R)$, the ring $X:X$ is finite over $R$ since
$X:X\cong\End_R(X)$ as $R$-modules, and it is $R$-divisorial by
Remark~\ref{2.5} $(3)$.
Thus, $X:X\in\B^v(R)$.
Moreover, $(aX):(aX)=X:X$ for every $a\in K^\times$.
The above equivalence also applies to fractional ideals and gives
\[
\Inv^v(A)=\{X\in\bsF^v(R)\mid X:X=A\}
\]
for any $A\in \B^v(R)$.
The inclusion from right to left follows from (3), and the reverse inclusion follows from Remark~\ref{overring-reflexive} and Lemma~\ref{5.a} applied over $A$.
Thus, $\bsF^v(R)$ is the disjoint union of these sets.
For ideals in the same $A$-component, isomorphisms as $R$-modules and as $A$-modules are both given by multiplication by elements of $K^\times$.
Moreover, the $v_R$-product and the $v_A$-product agree by Proposition~\ref{5.10}.
Therefore, by passing to isomorphism classes, we identify each component with $\Cl^v(A)$. This completes the proof.
\end{proof}

\begin{ex}[cf.\ {\cite[I, Example 3.10.1]{W}}]\label{ex:affine-cusp}
Let $k$ be a field, $V=k[t]$ be a polynomial ring over $k$, and $R=k[t^2,t^3]=k+t^2V\subseteq V$. Then $R=R^*$ and
\[
\B^v(R)=\{R,V\},\qquad
 \Cl^v(R)\cong (k,+),\quad
 \calS^v(R)\cong(k,+)\sqcup\{\omega\}.
\]
For $a\in k$, set
\[
 L_a=k(1+at)+t^2V.
\]
Under the isomorphism, $[L_a]$ corresponds to $a$ in $(k,+)$, and $\omega=[V]$.
The multiplication is given by
\[
 [L_a][L_b]=[L_{a+b}],\qquad [L_a]\omega=\omega,\qquad\omega^2=\omega.
\]
\end{ex}

\begin{proof}
The ring $R$ is a one-dimensional domain.
Set $M=(t^2,t^3)R$. Then $R_M$ is a Cohen-Macaulay local ring of multiplicity two, and hence is Arf (\cite[Example, p.~664]{L}).
The quotient $V/R$ is a one-dimensional $k$-vector space, and $R:V=t^2V$ and $R:(t^2V)=V$, so $\B^v(R)=\{R,V\}$.
Set $C=t^2V$. Since $\H_1(R)\setminus \H^{{\rm reg}}_1(R)=V(C)=\{M\}$, Theorem~\ref{3.9} gives $R=R^*$.

For $a\in k$, we have $L_aV=(1+at,t^2)V=V$.
Thus, for any $a,b\in k$, we have $C=VC=L_aVC=L_aC\subseteq L_aL_b$.
Since $(1+at)(1+bt)\equiv1+(a+b)t\pmod C$ in $V/C$, it follows that $L_aL_b=L_{a+b}$.
Hence, $L_aL_{-a}=L_0=R$, so each $L_a$ is invertible.
If $L_a=cL_b$ $(c\in k(t)^\times)$, then $V=cV$ gives $c\in k^\times$.
By comparing the constant terms and the coefficients of $t$ of the generators in $V/C$, we obtain $a=b$.

Conversely, let $L\in\Inv^v(R)$.
Since $R$ is one-dimensional, Lemma~\ref{5.a} shows that $L$ is principal after localization at every maximal ideal, and hence is invertible.
Since $V$ is a PID, after replacing $L$ by an isomorphic fractional ideal, we may assume that $LV=V$.
Then $L\subseteq V$, and $LC=t^2LV=C$ together with $C\subseteq R$ gives $C\subseteq L$.
If $L=C$, then $LV=C\ne V$.
If $L=V$, invertibility of $L$ gives $R=L:L=V:V=V$, which is again a contradiction.
Thus
\[
 C\subsetneq L\subsetneq V.
\]
Since $V/C$ is a two-dimensional $k$-vector space with basis $1,t$, the quotient $L/C$ is one-dimensional.
A nonzero generator of this quotient has a nonzero constant term, since otherwise $L\subseteq tV$, contradicting $LV=V$.
After division by an element of $k^\times$, we can therefore write $L=k(1+at)+C=L_a$ for some $a\in k$.
It follows that $a\mapsto[L_a]$ is a group isomorphism from $(k,+)$ to $\Cl^v(R)$.
The description of the class semigroup and its multiplication follows from $\Cl^v(V)=0$, $L_aV=V$, and Theorem~\ref{5.5}.
\end{proof}

\begin{ex}\label{ex:pinch}
Let $k$ be a field and $V=k[[s,t]]$ be the formal power series ring over $k$. For $n\ge2$, set $R_n=k[[s^n]]+tV\subseteq V$.
Then $ R_n=R_n^*$ and
\[
\B^v(R_n)=\{R_n,V\},\quad
 \Cl^v(R_n)\cong\mathbb Z/n\mathbb Z,\quad
 \calS^v(R_n)\cong(\mathbb Z/n\mathbb Z)\sqcup\{\omega\}.
\]
Here $\omega=[V]$, and multiplication within the group component is addition.
When $n=2$, we have $R_2\cong k[[u,v,w]]/(w^2-uv^2)$.
\end{ex}

\begin{proof}
Throughout this proof, we write $R=R_n$.
The ring $R$ is a finite free module over $k[[s^n,t]]$ with basis $1,st,\ldots,s^{n-1}t$.
Thus, it is a two-dimensional Cohen-Macaulay local domain whose integral closure $V$ is finite as an $R$-module and whose conductor is $\fkp=tV\in\H_1(R)$.
In particular, $V\in \B^v(R)$.
Set $T=R_\fkp$ and $W=V_\fkp=V_{(t)}$. Then $\fkp T=tW$, $(\fkp T)^2=t(\fkp T)$, and $\fkp T:\fkp T=W$.
Consequently, $T$ has minimal multiplicity, and its blow-up at $\fkp T$ is the DVR $W$.
It follows from \cite[Theorem 2.2]{L} that $T$ is Arf.
Since $\H_1(R)\setminus\H_1^{\rm reg}(R)=V(\fkp)\cap\H_1(R)=\{\fkp\}$, Theorem~\ref{3.9} gives $R=R^*$.
On the other hand, if $R\subsetneq A\in\B^v(R)$, then $\fkp=R:V\subseteq R:A\subsetneq R$.
The ideal $R:A$ is divisorial, so Proposition~\ref{2.4} (2) provides a height-one prime ideal $P$ containing $R:A$.
From $\fkp\subseteq R:A\subseteq P$ we obtain $P=\fkp$, and hence $R:A=\fkp$.
Therefore, $A=R:(R:A)=R:\fkp=V$, proving $\B^v(R)=\{R,V\}$.

We compute the class group using Proposition~\ref{5.8} below.

Since $\H_1(R)\setminus\H_1^{\rm reg}(R)=V(\fkp)\cap\H_1(R)=\{\fkp\}$, every element of $\H_1^{\rm reg}(R)$ can be written as $P_f=fV\cap R$ for an irreducible element $f$ of $V$ satisfying $f(s,0)\ne0$, and $R_{P_f}=V_{fV}$.

Set $D=k[[s^n]]$ and take
\[
 Q=sV\cap R=s^nD+stV.
\]
Then $Q=P_s\in \H_1^{\rm reg}(R)$.
Since $Q$ is divisorial and principal at every height-one prime ideal, Lemma~\ref{5.a} gives $Q\in\Inv^v(R)$.
For every $m\ge1$, $Q$ is the only height-one prime ideal containing $Q^m$.
As $R_Q=V_{sV}$ is a DVR, Proposition~\ref{2.2} (2) yields
\[
 (Q^m)^{v_R}=Q^mR_Q\cap R=s^mV_{sV}\cap R=s^mV\cap R.
\]
In particular, $D\cap s^mV=s^nD$ for $1\le m\le n$, so
\[
 (Q^m)^{v_R}=s^nD+s^mtV\qquad(1\le m\le n).
\]
Let $P_f\in\H_1^{\rm reg}(R)$. Since $f(s,0)\ne0$, we can write $f(s,0)=s^mu(s)$ $(m\ge1$, $u(s)\in k[[s]]^\times)$.
Replacing $f$ by $u(s)^{-1}f$ does not change $P_f$, so we may assume that $f-s^m\in tV$.
For every $a\in V$, we have $(f-s^m)a\in tV\subseteq R$, and hence $fa\in R$ if and only if $s^ma\in R$.
Therefore
\[
 \frac{P_f}{f}=\frac{fV\cap R}{f}=\frac{s^mV\cap R}{s^m}=\frac{(Q^m)^{v_R}}{s^m},
\]
which gives $[P_f]=[Q]^m$.
Proposition~\ref{5.8} now yields $\Cl^v(R)=\langle[Q]\rangle$.

Moreover, $(Q^n)^{v_R}=s^nD+s^ntV=s^nR$, so $[Q]^n=[R]$.
For $1\le m<n$, however,
\[
 \bigl((Q^m)^{v_R}\bigr)V=(s^n,s^mt)V=s^m(s^{n-m},t)V
\]
is not a principal ideal of $V$.
Hence, $(Q^m)^{v_R}$ is not principal over $R$, and $[Q]^m\ne[R]$.
Thus, $[Q]$ has order $n$, giving $\mathbb{Z}/n\mathbb{Z} \cong \Cl^v(R)$.
The description of the class semigroup follows from $\Cl^v(V)=0$ and Theorem~\ref{5.5}.
For any $X\in\bsF^v(R)$, Proposition~\ref{5.10} gives $(XV)^{v_R}=(XV)^{v_V}$.
Since $V$ is a UFD, this is a principal fractional ideal of $V$. Hence $[X][V]=[V]$.
The isomorphism for $n=2$ is given by $u\mapsto s^2$, $v\mapsto t$, and $w\mapsto st$.
\end{proof}

\begin{lem}[{cf.\ \cite[Proposition 5.3]{GKR}}]\label{5.11a}
The following assertions hold.
\benu[$(1)$]
\item For $A\in\B^v(R)$, the map $\phi_A:\Inv^v(R)\to\Inv^v(A)$ defined by $X\mapsto(XA)^{v_A}$ is a surjective group homomorphism.
It therefore induces a surjective group homomorphism $\Cl^v(R)\to\Cl^v(A)$.
\item Let $W\subseteq R\setminus\{0\}$ be multiplicatively closed.
The map $\phi_W:\Inv^v(R)\to\Inv^v(W^{-1}R)$ defined by $X\mapsto W^{-1}X$ is a surjective group homomorphism.
It therefore induces a surjective group homomorphism $\Cl^v(R)\to\Cl^v(W^{-1}R)$.

\enenu
\end{lem}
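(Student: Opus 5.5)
The plan is to prove both parts through a common mechanism: a local description of $v$-invertible divisorial ideals at height-one primes. By Lemma~\ref{5.a}, $X\in\bsF^v(R)$ lies in $\Inv^v(R)$ if and only if $X_P\cong R_P$ for every $P\in\H_1(R)$. The same criterion holds over $A\in\B^v(R)$ and over $W^{-1}R$, since both are Noetherian $(S_2)$ domains by Proposition~\ref{2.10}, and localization preserves $(S_2)$. For $W^{-1}R$ the relevant height-one primes are the $W^{-1}P$ with $P\in\H_1(R)$ disjoint from $W$. For $A$ they are the $Q\in\H_1(A)$, which lie over $P=Q\cap R\in\H_1(R)$ by Lemma~\ref{3.13}.

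For (2), first I show that $\phi_W$ is well defined and multiplicative. Colons of finitely generated fractional ideals commute with localization, so $W^{-1}X$ is $W^{-1}R$-divisorial, and $W^{-1}((XY)^{v_R})=(W^{-1}XW^{-1}Y)^{v_{W^{-1}R}}$. The local criterion then shows that $W^{-1}X$ is $v$-invertible. Surjectivity is the main point. Given $\mathfrak a\in\Inv^v(W^{-1}R)$, I choose generators in $R$ and let $X_0$ be the $R$-module they generate. I then set $X=\bigcap_{P}(X_0)_P^{v_{R_P}}$, where $P$ runs over those $P\in\H_1(R)$ with $P\cap W=\emptyset$, together with $R_P$ for the remaining $P\in\H_1(R)$. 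More concretely, set $X=(X_0)^{v_R}\cap W^{-1}$-saturation; the cleaner choice is
\[X=\mathfrak a\cap \bigcap_{P\in\H_1(R),\,P\cap W\neq\emptyset}(X_0+R)_P^{v_{R_P}},\]
adjusted as follows. Since $X_0\subseteq R$ can be assumed after scaling, only finitely many height-one primes contain $X_0$, so $X_0$ agrees with $R$ at all but finitely many $P$. The divisorial ideal $X=\mathfrak a\cap R$ is the natural candidate. Its localizations are $\mathfrak a_P$ for $P\cap W=\emptyset$ and $R_P$ otherwise, because $\mathfrak a\cap R$ localizes to $R_P$ whenever $W\cap P\neq\emptyset$. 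Both are principal, so $X\in\Inv^v(R)$ by Lemma~\ref{5.a}, and $W^{-1}X=\mathfrak a$. Divisoriality of $\mathfrak a\cap R$ follows from Remark~\ref{2.5} and Remark~\ref{overring-reflexive}-type reasoning, since $\mathfrak a$ is $R$-divisorial as an intersection of $(\mathfrak a)_P$ over height-one $P$, by Proposition~\ref{2.2}. The induced map on $\Cl^v$ is surjective because $\phi_W$ maps principal ideals onto principal ideals.

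For (1), $\phi_A$ is a homomorphism: $((XY)^{v_R}A)^{v_A}=(XYA)^{v_A}$. This holds because $(Z^{v_R}A)^{v_A}=(ZA)^{v_A}$, as $Z^{v_R}A\subseteq (ZA)^{v_A}$ follows from $A:ZA=R\text{-colon}$ computations: $A:(ZA)=A:Z$ is $R$-divisorial, and $Z^{v_R}(A:Z)\subseteq A$. The image is $v$-invertible by the local criterion, since $X_P=xR_P$ gives $(XA)_P=xA_P$, hence $(XA)^{v_A}_Q=xA_Q$. Surjectivity is the expected obstacle. Given $L\in\Inv^v(A)$, each $L_Q$ is principal, and $L$ agrees with $A$ at all but finitely many $Q\in\H_1(A)$ after scaling. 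For the finitely many $P\in\H_1(R)$ below the exceptional $Q$, I will try prime avoidance or the Chinese remainder theorem in the semilocal ring $A_P$, which has only finitely many height-one maximal ideals. This should produce $x_P$ with $L_P=x_PA_P$; invertibility over the semilocal $A_P$ gives principality, as in Lemma~\ref{5.3}. Then I take $X=\bigcap_{P\in\H_1(R)}(x_PR_P)\cap K$, namely $X=\bigcap_P x_PR_P$ with $x_P=1$ for almost all $P$. This $X$ is $R$-divisorial and, by Lemma~\ref{2.1}-style arguments, satisfies $X_P=x_PR_P$, so it lies in $\Inv^v(R)$. Finally $(XA)^{v_A}$ and $L$ agree at every $Q$, and Corollary~\ref{2.3} over $A$ gives equality. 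The delicate step is verifying $X_P=x_PR_P$, that is, that the intersection of finitely many distinct local conditions localizes correctly. I would handle it via Proposition~\ref{2.2}(2) applied to a scaled copy of $X$ inside $R$, whose associated primes are exactly the finitely many exceptional $P$.
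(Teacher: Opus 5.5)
Your part (1) follows the paper's route. You take principal generators $L_P=x_PA_P$ over the semilocal ring $A_P$, form the patched module $X=\bigcap_{P\in\H_1(R)}x_PR_P$ with $x_P=1$ for almost all $P$, and compare $(XA)^{v_A}$ with $L$ at height-one primes. Your global argument for multiplicativity, via $(Z^{v_R}A)^{v_A}=(ZA)^{v_A}$ and the $R$-divisoriality of $A$, is fine.

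\textbf{The gap in (1).} The one nontrivial step, which the paper isolates as its patching claim, is only asserted. You need two things.
\begin{enumerate}
\item $X\in\bsF(R)$. The paper gets $eR\subseteq X\subseteq d^{-1}R$ from Lemma~\ref{2.1}, writing $x_P=r_P/s_P$ for the finitely many $P$ with $x_P\ne1$ and setting $e=\prod r_P$, $d=\prod s_P$.
\item Above all, $X_P=x_PR_P$.
\end{enumerate}
The inclusion $X_P\subseteq x_PR_P$ is trivial, and the divisoriality of $X$ follows from it and Proposition~\ref{2.2}(1). The reverse inclusion, however, requires producing elements of an infinite intersection. The tool you suggest does not do this. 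Proposition~\ref{2.2}(2) expresses a divisorial ideal through its own localizations, so applying it to $dX$ is circular. Likewise, identifying $\Ass_R R/dX$ already presupposes knowing those localizations.

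The paper argues elementwise. For $x\in x_PR_P$, the set $F_x=\{Q\in\H_1(R)\mid x\notin x_QR_Q\}$ is finite and does not contain $P$. Choose $t_Q\in Q\setminus P$ with $t_Q^{n_Q}x\in x_QR_Q$ and set $t=\prod_{Q\in F_x}t_Q^{n_Q}$. Then $t\notin P$ and $tx\in X$.

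An equally short fix, closer to your ``Lemma~\ref{2.1}-style'' idea, also works. Since $dx_Q\in R$ for every $Q$, Lemma~\ref{2.1} gives $X=d^{-1}R\cap\bigcap_{Q\in S}x_QR_Q$ for the finite set $S=\{Q\mid x_Q\ne1\}\cup(V(d)\cap\H_1(R))$. Then $X_P=x_PR_P$, because localization commutes with finite intersections and $(x_QR_Q)_P=K$ for $Q\ne P$.

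\textbf{Part (2).} Once the abandoned drafts are deleted, your final choice $X=\mathfrak a\cap R$ (after scaling so that $\mathfrak a\subseteq W^{-1}R$) is correct. It is genuinely different from the paper, which builds the preimage from the same patching claim used in (1). Contraction is shorter and avoids patching altogether. It does not transfer to (1): for a field $k$, $R=k[[t^2,t^3]]$, $A=k[[t]]$ and $L=tA$, one gets $L\cap R=\fkm$ with $\fkm:\fkm=A\ne R$, so $\fkm\notin\Inv^v(R)$ by Lemma~\ref{5.a}. This is why the paper proves a single patching statement serving both parts.

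Your divisoriality justification in (2) should not cite Remark~\ref{2.5}, because $\mathfrak a$ is a $W^{-1}R$-module and in general does not lie in $\bsF(R)$. Argue directly instead. Each $X_P$ is principal: $X_P=\mathfrak a_{PW^{-1}R}$ if $P\cap W=\emptyset$, and $X_P=R_P$ otherwise. So Proposition~\ref{2.2}(1) gives $X^{v_R}\subseteq\bigcap_P X_P=\bigl(\bigcap_{P\cap W=\emptyset}\mathfrak a_{PW^{-1}R}\bigr)\cap R$. The inner intersection equals $\mathfrak a$ by Proposition~\ref{2.2}(1) applied over $W^{-1}R$, so $X^{v_R}\subseteq X$.
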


\begin{proof}
First, we prove the following claim.

\begin{claim}\label{clm:patching}
For each $P\in\H_1(R)$, choose $\alpha_P\in K^\times$ so that $\alpha_P=1$ for all but finitely many prime ideals.
Set
\[
J=\bigcap_{P\in\H_1(R)}\alpha_PR_P.
\]
Then $J\in\Inv^v(R)$ and $J_P=\alpha_PR_P$.

\end{claim}

\begin{proof}[Proof of Claim~\ref{clm:patching}.]
Set $F=\{P\in \H_1(R) \mid \alpha_P\ne1\}$ and write $\alpha_P=\frac{r_P}{s_P}$ $(r_P,s_P\in R\setminus\{0\})$.
Let $e=\prod_{P\in F}r_P$ and $d=\prod_{P\in F}s_P$. Lemma~\ref{2.1} gives $eR\subseteq J\subseteq d^{-1}R$, so $J\in \bsF(R)$.

Fix $P$ and choose $0\ne x\in\alpha_PR_P$. Write $x=\frac{u}{v}$ $(u,v\in R)$ and set
\[
F_x=\{Q\in\H_1(R)\mid x\notin\alpha_QR_Q\}.
\]
Then $F_x\subseteq F\cup(V(v)\cap\H_1(R))$.
Indeed, if $Q\in \H_1(R)$ satisfies $\alpha_Q=1$ and $v\notin Q$, then $x\in R_Q=\alpha_QR_Q$.
Thus, $F_x$ is finite, and $P\notin F_x$.

For each $Q\in F_x$, choose $t_Q\in Q\setminus P$.
Since $(\alpha_QR_Q:_{R_Q}x)$ is $QR_Q$-primary, we have $t_Q^{n_Q}x\in\alpha_QR_Q$ for some $n_Q>0$.
Set $t=\prod_{Q\in F_x}t_Q^{n_Q}$. Then $t\notin P$ and $tx\in J$, so $x\in J_P$.
The inclusion $J_P\subseteq\alpha_PR_P$ is clear, and hence $J_P=\alpha_PR_P$.
Moreover,
$$J^{v_R}\subseteq\bigcap_{P\in \H_1(R)}(J_P)^{v_{R_P}}=\bigcap_{P\in \H_1(R)}(\alpha_PR_P)^{v_{R_P}}=J$$
shows that $J$ is divisorial. Lemma~\ref{5.a} now gives $J\in\Inv^v(R)$.
\end{proof}

$(1)$ Let $X\in\Inv^v(R)$, $Q\in\H_1(A)$, and $P=Q\cap R$.
By Lemmas~\ref{3.13} and \ref{5.a}, we can write $X_P=\alpha R_P$, and hence
$$((XA)^{v_A})_Q=((X_PA_P)^{v_{A_P}})_Q=((\alpha A_P)^{v_{A_P}})_Q=(\alpha A_P)_Q=\alpha A_Q.$$
Thus, $(XA)^{v_A}\in\Inv^v(A)$.
By the same calculation at height-one prime ideals, we obtain
\[
\phi_A((XY)^{v_R})=(\phi_A(X)\phi_A(Y))^{v_A},
\]
so $\phi_A$ is a group homomorphism.

Let $L\in\Inv^v(A)$. Choose $b,c\in R\setminus\{0\}$ such that $cA\subseteq L\subseteq b^{-1}A$.
If $P\notin V(bc)$, then $L_P=A_P$.
For the remaining $P\in\H_1(R)\cap V(bc)$, the ring $A_P$ is a one-dimensional semi-local domain, and $L_P$ is principal after localization at each of its maximal ideals.
Thus, we can write $L_P=\alpha_PA_P$.
Choosing $\alpha_P=1$ for all but finitely many prime ideals, let $J$ be the fractional ideal constructed in Claim~\ref{clm:patching}.
Then
\[
((JA)^{v_A})_P=\alpha_PA_P=L_P.
\]
Since both sides are $R$-divisorial, we obtain $(JA)^{v_A}=L$. Hence the map is surjective.
The principal fractional ideal $aR$ maps to $aA$, so the induced map on class groups is also surjective.

$(2)$ Set $T=W^{-1}R$. For $X\in \Inv^v(R)$, we have $(XT(T:XT))^{v_{T}}=((X(R:X))^{v_R})T=T$, so $XT\in\Inv^v(T)$.
Localization commutes with $v$-closure and therefore preserves the
$v$-product. Hence, $\phi_W$ is a group homomorphism.
Let $L\in\Inv^v(T)$. By clearing denominators, we may assume that $L\subseteq T$.
Choose $0\ne\ell\in L$ and write $\ell=\frac{a}{w}$ ($a\in R\setminus\{0\}$, $w\in W$). Then $aT=\ell T\subseteq L\subseteq T$.
Consequently, if $P\in \H_1(R)$, $P\cap W=\emptyset$, and $a\notin P$, then $L_{PT}=R_P$.
For $P\in\H_1(R)$ satisfying $P\cap W=\emptyset$, write $L_{PT}=\alpha_PR_P$, choosing $\alpha_P=1$ when $a\notin P$.
Since $V(a)\cap\H_1(R)$ is finite, only finitely many $P$ satisfy $\alpha_P\ne1$.
For $P\in \H_1(R)$ with $P\cap W\ne \emptyset$, set $\alpha_P=1$.
The construction in Claim~\ref{clm:patching} then gives $J\in\Inv^v(R)$ and $JT=L$.
If denominators were cleared at the outset, we divide the resulting fractional ideal by the same nonzero element.
Since principal fractional ideals map to principal fractional ideals, the induced map on class groups is also surjective. This completes the proof.
\end{proof}

\begin{prop}\label{5.13}
Suppose that $R=R^*$. Then, the following conditions are equivalent.

\benu[$(1)$]
\item
The set $\B^v(R)$ is finite.
\item
 $\bar{R}$ is a finitely generated $R$-module.

\enenu

\end{prop}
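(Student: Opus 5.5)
Both implications use the localization $P\in\H_1(R)$, where each $R_P$ is a one-dimensional Arf local domain (Theorem~\ref{3.9}).

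For $(2)\Rightarrow(1)$, suppose $\bar R$ is a finitely generated $R$-module. Every $A\in\B^v(R)$ is integral over $R$, so $A\subseteq\bar R$. Let $\fkc=R:\bar R\neq 0$. Then $\fkc\subseteq R:A$, so $R:A\in\F^v(R)$ and $V(R:A)\cap\H_1(R)\subseteq V(\fkc)\cap\H_1(R)$, which is a finite set $\{P_1,\dots,P_n\}$ by Proposition~\ref{2.4}. For $P\notin V(\fkc)$ we have $R_P=\bar R_P$, hence $A_P=R_P$. Since $A$ is divisorial, Corollary~\ref{2.3} shows that $A$ is determined by the tuple $(A_{P_1},\dots,A_{P_n})$, and each $A_{P_i}\in\B^v(R_{P_i})$ is an intermediate ring between $R_{P_i}$ and $\bar R_{P_i}=\bar{R_{P_i}}$.

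It then suffices to show that a one-dimensional Arf local domain $T$ with $\bar T$ finite over $T$ has only finitely many rings in $\B^v(T)$. By Theorem~\ref{3.3}, each $A\in\B^v(T)$ satisfies $A\cong R:A=\tr_T(A)$, an integrally closed trace ideal $I$ with $A=I:I$. I would use the Arf blow-up sequence (Lipman): $T=T_0\subsetneq T_1=\fkm:\fkm\subsetneq\cdots$, which terminates at $\bar T$ because $\bar T$ is finite and $\ell(\bar T/T)<\infty$. I would then argue that every $A\in\B^v(T)$, being Arf by Corollary~\ref{3.6}, appears among finitely many iterated blow-ups at the maximal ideals of these semilocal rings. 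Justifying this finiteness (alternatively, showing that the integrally closed trace ideals form a finite set) is the main obstacle. A cruder route: the $A$ lie between $T$ and $\bar T$, and they correspond to integrally closed ideals $I\supseteq \fkc$ of the form $\alpha\bar T\cap T$ via $I=R:A$. If $\bar T$ is semilocal with infinite residue fields, however, such families can be infinite, so one really needs the Arf structure (for instance, multiplicity sequences) to pin them down.

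For $(1)\Rightarrow(2)$, if $\B^v(R)$ is finite, I would take the $R$-subalgebra generated by all $A\in\B^v(R)$; this is their compositum $B$, which is finite over $R$. It suffices to show $B^{v_R}=\bar R$. At each $P\in\H_1(R)$, $R_P$ is Arf, and the above blow-up theory gives $\bar R_P=\bigcup \B^v(R_P)$ (Lipman's blow-up rings are divisorial, since they are of the form $I:I$). Using Lemma~\ref{5.11a}-style patching together with Corollary~\ref{2.3}, every ring in $\B^v(R_P)$ lifts to some $A\in\B^v(R)$ by taking $A=\bigcap_Q A_Q$. Hence $\bar R\subseteq\bigcap_P B_P=B^{v_R}$ by Proposition~\ref{2.2}, and since $B^{v_R}$ is finitely generated, $\bar R$ is a finitely generated $R$-module.
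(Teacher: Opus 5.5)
\paragraph{The direction $(2)\Rightarrow(1)$ has a genuine gap.} Your reduction to $T=R_P$ with $P\in V(\fkc)\cap\H_1(R)$ is correct. You also record the two facts that finish the proof: $A\mapsto T:A$ is injective on $\B^v(T)$, and $T:A=\tr_T(T:A)$ is an integrally closed ideal (Theorem~\ref{3.3}) containing $\mathfrak d=T:\bar T$. But you leave finiteness open. You also dismiss this route, claiming that with infinite residue fields the family of integrally closed ideals $\alpha\bar T\cap T\supseteq\mathfrak d$ can be infinite. That claim is false, and its correction is exactly the missing step.

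Put $V=\bar T$, a one-dimensional normal Noetherian semilocal domain with maximal ideals $\mathfrak n_1,\ldots,\mathfrak n_r$. The conductor $\mathfrak d$ is a nonzero ideal of $V$, say $\mathfrak d=\prod_j\mathfrak n_j^{c_j}$. Hence the ideals of $V$ containing $\mathfrak d$ are among the $\prod_j(c_j+1)$ products $\prod_j\mathfrak n_j^{b_j}$ with $0\le b_j\le c_j$. An integrally closed ideal $L$ of $T$ is determined by $LV$, because $L=\bar L=\overline{LV}\cap T=LV\cap T$ (every nonzero ideal of $V$ is integrally closed). So there are at most $\prod_j(c_j+1)$ integrally closed ideals $L\supseteq\mathfrak d$, whatever the residue fields. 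Different $\alpha$ with the same $\alpha V$ give the same ideal $\alpha V\cap T$.

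What can be infinite is the set of all intermediate rings between $T$ and $\bar T$, or the set of all ideals containing $\mathfrak d$. Passing to $T:A$ avoids both. This is the paper's argument. Your alternative via Lipman's blow-up sequence is only asserted: you never justify that every $A\in\B^v(T)$ occurs among finitely many iterated blow-ups. So as written, this direction is not proved.

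\paragraph{The direction $(1)\Rightarrow(2)$.} Your argument is correct in outline. The ring $A'\cap\bigcap_{Q\ne P}R_Q$ does lie in $\B^v(R)$ and localizes to $A'$ at $P$, by the argument of Claim~\ref{clm:patching}. However, it is much heavier than needed, and it relies on blow-up facts you do not prove. For example, you need that the iterated blow-ups exhaust $\bar{R_P}$ even when $\bar{R_P}$ is not finite over $R_P$, and that they are $R_P$-divisorial.

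The paper argues globally and does not use the Arf property. For $x\in\bar R$, the ideal $\mathfrak d=R:R[x]$ is a nonzero ideal of $R[x]$. Hence $R[x]^{v_R}=R:\mathfrak d=\mathfrak d:\mathfrak d$ is a finite birational divisorial extension containing $x$. Thus $\bar R=\bigcup_{A\in\B^v(R)}A$, and finiteness of $\B^v(R)$ immediately gives that $\bar R$ is a finitely generated $R$-module.
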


\begin{proof}
$(1)\Rightarrow(2):$ For $x\in\bar R$, set $B=R[x]$ and $\mathfrak d=R:B$.
Then $B\in\B(R)$ and $\mathfrak d=\mathfrak d B=\tr_R(B)$, so
\[
 B^{v_R}=R:\mathfrak d=\mathfrak d:\mathfrak d.
\]
The latter is a finite birational extension of $R$, since $\mathfrak d$ is a nonzero finitely generated ideal.
Thus, $B^{v_R}\in\B^v(R)$ and $x\in B^{v_R}\subseteq\bar R$.
It follows that $\bar R=\bigcup_{A\in\B^v(R)}A$.
If $\B^v(R)$ is finite, then $\bar R=\sum_{A\in\B^v(R)}A$ is a finitely generated $R$-module.

$(2)\Rightarrow(1):$ Suppose that $\bar{R}$ is finitely generated as an $R$-module, and set $\fkc=R:\bar{R}$.
For $A\in \B^v(R)$, set $I_A=R:A$. Then $I_A\in \F^v(R)$ is a trace ideal.
By Theorem~\ref{3.3}, the ideal $I_AR_P$ is integrally closed for every $P\in \H_1(R)$, so $I_A$ is integrally closed.
If $A, B\in \B^v(R)$ satisfy $I_A=I_B$, then $A=R:I_A=R:I_B=B$.
To prove that $\B^v(R)$ is finite, it therefore suffices to show that there are only finitely many integrally closed ideals $I\in \F^v(R)$ satisfying $\fkc\subseteq I$.

Choose $P\in \Ass_R R/\fkc$ and set $T=R_P$, $V=(\bar R)_P$, and $\mathfrak d=\mathfrak c_P$.
The ring $V$ is a one-dimensional Noetherian normal semi-local domain.
Let $\mathfrak n_1,\ldots,\mathfrak n_r$ be its maximal ideals, and write $\mathfrak d=\prod_j\mathfrak n_j^{c_j}$.
For an integrally closed ideal $L$ satisfying $\mathfrak d\subseteq L\subseteq T$, we have $L=LV\cap T$ and
\[
LV=\prod_j\mathfrak n_j^{b_j} \qquad (0\le b_j\le c_j).
\]
Thus, there are at most $\prod_j(c_j+1)$ choices for $L$.
Consequently, for every integrally closed ideal $I\in \F^v(R)$ satisfying $\fkc\subseteq I$, there are only finitely many possibilities for $IR_P$.
For $P\in\H_1(R)\setminus\Ass_R(R/\fkc)$, we have $IR_P=R_P$, since $\fkc\subseteq I$ and $\fkc\nsubseteq P$.
Since $\Ass_R R/\fkc$ is finite, Corollary~\ref{2.3} shows that there are only finitely many such ideals $I$.
Therefore, $\B^v(R)$ is finite.
\end{proof}

\begin{thm}\label{5.14}
Suppose that $R=R^*$. Then, the following conditions are equivalent.

\benu[$(1)$]
\item
The monoid $\calS^v(R)$ is finitely generated.
\item
$\bar R$ is a finitely generated $R$-module, and $\Cl^v(R)$ is a finitely generated abelian group.

\enenu

\end{thm}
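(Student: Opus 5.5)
We use the decomposition of Theorem~\ref{5.5}, namely $\calS^v(R)=\bigsqcup_{A\in\B^v(R)}\calS_A^v(R)$ with $\calS_A^v(R)\cong\Cl^v(A)$, together with Proposition~\ref{5.13} and Lemma~\ref{5.11a}(1).

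First we record how the components multiply. For $[X]\in\calS_A^v(R)$ and $[Y]\in\calS_B^v(R)$ the product $[(XY)^{v_R}]$ lies in $\calS_C^v(R)$ with $C=(XY)^{v_R}:(XY)^{v_R}$. We will check, locally at each $P\in\H_1(R)$, that $C=(AB)^{v_R}$. In the Arf local case $X_P=xA_P$ and $Y_P=yB_P$, so $(XY)_P=xyA_PB_P$. By Corollary~\ref{2.3} the claim then follows. Define $A\vee B=(AB)^{v_R}$; this is again in $\B^v(R)$, so $\B^v(R)$ is a semilattice. The product of $[A]$ with an element of $\calS_B^v(R)$ is its image under $\phi$ composed with the natural map $\Cl^v(B)\to\Cl^v(A\vee B)$ of Lemma~\ref{5.11a}(1), which uses Proposition~\ref{5.10}.

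$(2)\Rightarrow(1)$. By Proposition~\ref{5.13}, $\B^v(R)$ is finite. Since $\Cl^v(R)$ is finitely generated, Lemma~\ref{5.11a}(1) shows that every $\Cl^v(A)$ is finitely generated, being a surjective image of $\Cl^v(R)$. We propose as generators of $\calS^v(R)$ the finite set consisting of the idempotents $[A]$ for $A\in\B^v(R)$ together with lifts to $\Inv^v(R)$ of generators of $\Cl^v(R)$.

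To see that these generate, take $[Z]\in\calS_A^v(R)$. By surjectivity of $\phi_A$ we can write $Z\cong(XA)^{v_A}$ for some $X\in\Inv^v(R)$. Then
\[
[Z]=[(XA)^{v_R}]=[X][A],
\]
where the equality $(XA)^{v_A}=(XA)^{v_R}$ is Proposition~\ref{5.10}. Finally $[X]$ is a word in the chosen generators of $\Cl^v(R)$ and their inverses, and inverses in the finitely generated abelian group are themselves words in the generators.

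$(1)\Rightarrow(2)$. Assume $\calS^v(R)$ is generated by finitely many classes $[X_1],\dots,[X_m]$, with $X_i\in\calS_{A_i}^v(R)$. By the multiplication rule above, any product lands in the component indexed by $\bigvee$ of the $A_i$ involved. Hence only finitely many components occur, so $\B^v(R)$ is finite and Proposition~\ref{5.13} gives that $\bar R$ is finite over $R$. For the group, note that a product lies in $\calS_R^v(R)=\Cl^v(R)$ only if every factor involved has $A_i=R$, since $A\vee B\supseteq A,B$. So $\Cl^v(R)$ is generated as a monoid by the generators lying in it, and is therefore a finitely generated group.

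The main obstacle is the local verification that $(XY)^{v_R}:(XY)^{v_R}=(AB)^{v_R}$, which is needed to control the components. The computation goes through in the one-dimensional Arf case via $X_P=xA_P$ from Lemma~\ref{5.3}. One also has to check that $(AB)^{v_R}$ commutes with localization at $P$ and is a ring, which follows from $(AB)^{v_R}=(AB)^{v_R}:(AB)^{v_R}$ computed locally.
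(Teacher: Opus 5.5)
Your proposal is correct in substance, but it contains one slip that you need to repair. In $(2)\Rightarrow(1)$ you take as generators the idempotents $[A]$ together with lifts of \emph{group} generators $g_1,\dots,g_m$ of $\Cl^v(R)$. You then assert that inverses in a finitely generated abelian group are words in the generators. For monoid words this is false. If $\Cl^v(R)\cong\mathbb Z$, which can happen for a normal domain, a single group generator $g$ does not generate $\Cl^v(R)$ as a monoid, so your proposed set need not generate $\calS^v(R)$. The fix is the one the paper uses: add $g_1^{-1},\dots,g_m^{-1}$ to the generating set, which keeps it finite. With that change, your $(2)\Rightarrow(1)$ is exactly the paper's argument: Proposition~\ref{5.13}, surjectivity of $\phi_A$, and Proposition~\ref{5.10} giving $[Z]=[X][A]$.

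For $(1)\Rightarrow(2)$ you take a more concrete route than the paper, and it works. The paper uses only the Clifford structure from Theorem~\ref{5.5}. In a commutative Clifford semigroup $G_eG_f\subseteq G_{ef}$, so the idempotent of a product of generators is the product of their idempotents. This gives at most $2^m$ idempotents, and hence $|\B^v(R)|\le 2^m$. Moreover, in any commutative monoid a product that is a unit has unit factors, so $\Cl^v(R)$ is generated by those generators that are units.

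You instead compute the component of $[X][Y]$ explicitly as $(AB)^{v_R}$, using the local identity $X_P=xA_P$ in the Arf ring $R_P$ (Theorem~\ref{3.4}) and Corollary~\ref{2.3}. This is correct, and it gives extra information: the semilattice of idempotents is $\B^v(R)$ with join $A\vee B=(AB)^{v_R}$, which matches $[A][B]$. The cost is a verification that the abstract argument avoids.

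Your justification that $(AB)^{v_R}$ is a ring is somewhat circular as written. It follows directly from $X^{v_R}Y^{v_R}\subseteq (XY)^{v_R}$. Alternatively, use $(AB)^{v_R}=\fkc:\fkc$ with $\fkc=R:AB$, as in the proof of Theorem~\ref{3.4}.
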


\begin{proof}
$(1)\Rightarrow (2):$ Let $s_1, s_2, \ldots, s_m$ be generators of $\calS^v(R)$, and let $e_1,\ldots,e_m$ be the corresponding idempotents.
The idempotent corresponding to $s_is_j$ is $e_ie_j$.
Since each $e_i$ is idempotent, the semigroup $\calS^v(R)$ has at most $2^m$ idempotents.
By Theorem~\ref{5.5}, the set $\B^v(R)$ is finite. Hence $\bar R$ is finitely generated as an $R$-module by Proposition~\ref{5.13}.
In a commutative monoid, if a product is a unit, then each factor is a unit.
Hence, $\Cl^v(R)$ is generated by those $s_i$ that are units.

$(2)\Rightarrow (1):$ By Proposition~\ref{5.13}, we can write $\B^v(R)=\{A_1,\ldots,A_n\}$.
Let $g_1,\ldots,g_m$ generate $\Cl^v(R)$ as a group.
By Lemma~\ref{5.11a} $(1)$ and Proposition~\ref{5.10}, every element of the $A_i$-component has the form $g[A_i]$ with $g\in\Cl^v(R)$.
Thus, $\calS^v(R)$ is generated by $[A_1],\ldots,[A_n]$ together with $g_j,g_j^{-1}$. This completes the proof.
\end{proof}

\begin{lem}\label{5.7}
Suppose that $R=R^*$ and that $\bar{R}$ is a finitely generated $R$-module.
Then, for every $I\in \F^v(R)$, there exists an integrally closed ideal $J\in \F^v(R)$ such that $I\cong J$.

\end{lem}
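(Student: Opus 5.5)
The approach is to reduce to the one-dimensional Arf case via Theorem~\ref{3.5}-type structure, using Theorem~\ref{5.5} and Lemma~\ref{5.11a}. Let $I\in\F^v(R)$ and set $A=I:I\in\B^v(R)$. By Theorem~\ref{5.5}, $I\in\Inv^v(A)$. Set $\fka=R:A$; this is an $R$-divisorial trace ideal with $\fka:\fka=A$ (since $R:\fka=A^{v_R}=A$ and $\fka=\tr_R(A)$, so $\fka:\fka=R:\fka$). Moreover $\fka$ is integrally closed: at each $P\in\H_1(R)$ the ideal $\fka R_P$ is a trace ideal of the Arf ring $R_P$, hence integrally closed by Theorem~\ref{3.3}. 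Thus $[\fka]$ lies in the $A$-component $\calS^v_A(R)\cong\Cl^v(A)$ and is an integrally closed representative of one class there. The task is to realize every other class of this component by an integrally closed divisorial ideal.

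By Lemma~\ref{5.11a}(1) together with Proposition~\ref{5.10}, there exists $X\in\Inv^v(R)$ with $[I]=[X][\fka]$, that is, $I\cong (X\fka)^{v_R}$. The idea is to choose $X$ in its class so that $(X\fka)^{v_R}$ becomes integrally closed. Let $\fkc=R:\bar R$, which is nonzero because $\bar R$ is finite, and let $F=V(\fkc)\cap\H_1(R)$, a finite set containing all non-regular height-one primes. Using the patching Claim~\ref{clm:patching}, I would modify $X$ by a principal ideal and by local factors at primes in $F$ so that $X_P=R_P$ for every $P\in F$. Concretely, replace $X$ by $X\cap\bigcap\ldots$, or more cleanly, by $J=\bigcap_P\beta_PR_P$ with $\beta_P=\alpha_P$ off $F$ and $\beta_P=1$ on $F$. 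This changes the class, so one must then compensate. The better route is to multiply $X$ by $y^{-1}$, where $y\in K^\times$ is chosen by approximation with $X_P=yR_P$ for all $P\in F$. Such a $y$ exists because $R$ is Noetherian and $F$ is finite: prime avoidance and CRT in the semilocalization $R_F$, where $X_F$ is invertible hence principal. After this replacement, $X_P=R_P$ for $P\in F$, and I may further scale so that $X\subseteq R$.

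Then set $J=(X\fka)^{v_R}\subseteq R$. At $P\in F$ we have $J_P=\fka_P$, which is integrally closed. At $P\in\H_1(R)\setminus F$, $R_P$ is a DVR, so $J_P$ is automatically integrally closed. Since $J$ is divisorial, Proposition~\ref{2.7} writes $J=\bigcap_P (J_P\cap R)$ over $P\in\Ass_R R/J\subseteq\H_1(R)$. An intersection of contractions of integrally closed ideals is integrally closed, because integral closure localizes and $\bar{J}\subseteq \bar{J_P}\cap R=J_P\cap R$. Hence $J$ is integrally closed, divisorial, and $J\cong I$.

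The main obstacle is the approximation step producing $y$ with $X_P=yR_P$ simultaneously for all $P\in F$. I would handle this by passing to $T=W^{-1}R$ with $W=R\setminus\bigcup_{P\in F}P$. This is a semilocal one-dimensional ring whose maximal ideals are the $PT$. In $T$, $XT$ is locally principal at each maximal ideal (Lemma~\ref{5.a}), hence invertible, hence principal, say $XT=yT$; this $y$ works.
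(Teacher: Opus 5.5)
Your argument is correct, but it takes a different and heavier route than the paper.

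\textbf{The paper's route.} The paper semilocalizes directly at the finite set $\Sigma$ of non-regular height-one primes. There $T=W^{-1}R$ is a one-dimensional semi-local Arf domain, so Theorem~\ref{3.3} applied to $IT$ gives $\alpha IT=\tr_R(I)T$ with the right-hand side integrally closed. Then $J=s\alpha I$, with $s\in W$, is integrally closed at every height-one prime, and Proposition~\ref{2.2} finishes the proof.

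\textbf{Your route.} You first use the global structure theory: Theorem~\ref{5.5}, the surjectivity of $\phi_A$ in Lemma~\ref{5.11a}~(1) (with its patching claim), and Proposition~\ref{5.10}. These let you write $I=(X\fka)^{v_R}$ with $X\in\Inv^v(R)$ and $\fka=R:(I:I)$. Only then do you semilocalize, and only to trivialize the $v$-invertible factor $X$ on $F$.

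\textbf{What each buys.} Your semilocal step needs only Lemma~\ref{5.a} and the elementary fact that a locally principal fractional ideal of a semi-local domain is principal. You apply Theorem~\ref{3.3} only to the local rings $R_P$. The paper's proof instead applies Theorem~\ref{3.3} to the semi-local ring $T$, which uses that $T$ is Arf as a semi-local ring. Your route also gives a more structural picture: every class in the $A$-component has an integrally closed representative that coincides with the canonical trace ideal $R:A$ at every non-regular height-one prime. The two constructions in fact have the same local data there, since $\tr_R(I)R_P=(R:A)R_P$ for $P\in\H_1(R)$. The price is dependence on the Clifford decomposition and the patching lemma, which the paper's short proof avoids.

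\textbf{A detail to tighten.} After replacing $X$ by $y^{-1}X$, the rescaling that makes $X\subseteq R$ must be by an element $s\in W$. This is possible because $X\subseteq XT=T$ and $X$ is finitely generated. Rescaling by an element outside $W$ would destroy $X_P=R_P$ for $P\in F$. When $F=\emptyset$, the approximation step is vacuous, and $T$ should not be formed.
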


\begin{proof}
Set $\Sigma=\H_1(R)\setminus \H_1^{{\rm reg}}(R)$.
Since $\bar{R}$ is finitely generated, setting $\fkc=R:\bar{R}$ gives $\fkc\in \F^v(R)$ and $\Sigma \subseteq V(\fkc)\cap \H_1(R)=\Ass_R R/\fkc$, so $\Sigma$ is finite.
If $\Sigma=\emptyset$, the localizations of $I$ at height-one prime ideals are ideals of DVRs and hence are integrally closed.
By Proposition~\ref{2.2} $(1)$, the ideal $I$ is integrally closed, so we may take $J=I$.

Suppose that $\Sigma\ne\emptyset$.
Set $W=R\setminus\bigcup_{P\in\Sigma}P$ and $T=W^{-1}R$. Then $T$ is a one-dimensional Arf semi-local domain.
By Theorem~\ref{3.3},
\[
IT\cong \tr_T(IT)=\tr_R(I)T
\]
and the right-hand side is integrally closed.
Choose $\alpha\in K^\times$ such that $\alpha IT=\tr_R(I)T$, and then choose $s\in W$ such that $s\alpha I\subseteq R$.
Set $J=s\alpha I$. Then $J\in\F^v(R)$.
For $P\in\Sigma$, the equality $JR_P=[s\alpha IT]_P=[\tr_R(I)T]_P=\tr_R(I)R_P$ shows that $JR_P$ is integrally closed.
For every $P\in \H_1(R)\setminus \Sigma$, the ring $R_P$ is a DVR, so $JR_P$ is again integrally closed.
By Proposition~\ref{2.2} $(1)$, we have $J=\bigcap_{P\in\H_1(R)}JR_P$.
It follows that $J$ is integrally closed and $I\cong J$.
\end{proof}

\begin{prop}\label{5.8}
Suppose that $R=R^*$ and that $\bar{R}$ is a finitely generated $R$-module. Then
$$\Cl^v(R)=\langle [P] \mid P\in \H_1^{\rm reg}(R) \rangle.$$

\end{prop}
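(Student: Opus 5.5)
First I check that each $P\in\H_1^{\rm reg}(R)$ really gives a class in $\Cl^v(R)$. By Proposition~\ref{2.4}(1), $P$ is divisorial. For $Q\in\H_1(R)$ with $Q\ne P$ we have $P_Q=R_Q$, and $P_P=PR_P$ is principal because $R_P$ is a DVR. So $P\in\Inv^v(R)$ by Lemma~\ref{5.a}, and the right-hand side is a subgroup of $\Cl^v(R)$.

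For the reverse inclusion, let $X\in\Inv^v(R)$. Lemma~\ref{5.7} gives an integrally closed $J\in\F^v(R)$ with $X\cong J$. Put $\Sigma=\H_1(R)\setminus\H_1^{\rm reg}(R)$, which is finite as in the proof of Lemma~\ref{5.7}. Fix $P\in\Sigma$. Since $J\in\Inv^v(R)$, Lemma~\ref{5.a} shows that $JR_P$ is principal, say $JR_P=aR_P$. It is also integrally closed, and $R_P$ is a one-dimensional Arf local domain. The key claim is that an integrally closed principal ideal $aR_P$ must equal $R_P$. Indeed, $\overline{aR_P}=a\bar R_P\cap R_P$. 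If $a$ were a nonunit, then $a\bar R_P\cap R_P\supseteq a\fkm_P\bar R_P$-type elements would force $a\bar R_P\cap R_P=aR_P$, hence $a(\bar R_P\cap a^{-1}R_P)=aR_P$. Then the conductor-type inclusion $\bar R_P\cap a^{-1}R_P=R_P$ would give $(R_P:\bar R_P)\subseteq aR_P$ after an adjustment, and together with $\bar R_P\neq R_P$ this is contradictory.

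A cleaner route to the same claim uses Theorem~\ref{3.3}. By that theorem $JR_P\cong\tr(JR_P)$. Since $JR_P$ is invertible, $\tr(JR_P)=R_P$. Moreover, an integrally closed ideal isomorphic to $R_P$ is of the form $aR_P$ with $aR_P$ integrally closed, i.e.\ $a\bar R_P\cap R_P=aR_P$. If $a\in\fkm_P$, choose $\beta\in\bar R_P\setminus R_P$ with $\beta\fkm_P\subseteq R_P$, which exists because $\fkm_P:\fkm_P\supsetneq R_P$ when $R_P$ is not a DVR. Then $a\beta\in a\bar R_P\cap\fkm_P\subseteq aR_P$, giving $\beta\in R_P$, a contradiction. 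Hence $JR_P=R_P$ for every $P\in\Sigma$.

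It follows that $V(J)\cap\H_1(R)\subseteq\H_1^{\rm reg}(R)$. If $J=R$, then $[X]=[R]$ and we are done. Otherwise Corollary~\ref{2.8}(2) gives $J=(P_1^{m_1}\cdots P_n^{m_n})^{v_R}$ with each $P_i\in\H_1^{\rm reg}(R)$, so $[X]=[J]=\prod_i[P_i]^{m_i}$. The main obstacle is the local claim that an integrally closed principal ideal in a non-DVR one-dimensional local domain is the unit ideal. I would settle it by the $\fkm_P:\fkm_P$ argument above, which uses Proposition~\ref{2.4}(1) to see that $\fkm_P$ is divisorial and hence $R_P:\fkm_P=\fkm_P:\fkm_P\ne R_P$.
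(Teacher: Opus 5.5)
Your proposal is correct and follows the paper's proof. You pass to an integrally closed representative $J$ via Lemma~\ref{5.7}, show that a proper integrally closed principal ideal $aR_P$ can occur only when $R_P$ is a DVR, and conclude with Corollary~\ref{2.8}(2). You should state explicitly that you clear denominators first, since Lemma~\ref{5.7} is stated for ideals of $R$.

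Your $\fkm_P:\fkm_P$ argument is a valid contrapositive variant of the paper's local step. The paper instead takes any $z\in\bar{R_P}$, chooses the least $n$ with $a^nz\in R_P$, and uses $a\bar{R_P}\cap R_P=aR_P$ to force $n=0$. Your first, garbled sketch is not an argument as written and, like the appeal to Theorem~\ref{3.3}, should be dropped.
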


\begin{proof}
Every $P\in\H_1^{\rm reg}(R)$ is divisorial.
Its localization at $P$ is a principal ideal of a DVR, and its localization at every other height-one prime ideal is the unit ideal.
Lemma~\ref{5.a} therefore gives $P\in\Inv^v(R)$.

Let $X\in\Inv^v(R)$. By clearing denominators, we may assume that $X\subseteq R$.
By Lemma~\ref{5.7}, choose an integrally closed ideal $J\in\F^v(R)$ such that $X\cong J$.
If $J=R$, then $[X]=[R]$ and there is nothing to prove. Therefore, we may assume that $J\neq R$.
For $P\in\Ass_R(R/J)$, we can write $JR_P=aR_P$ $(0\ne a\in R)$.
We claim that $R_P$ is a DVR.
Indeed, since $a\in PR_P\setminus\{0\}$ and $\dim R_P=1$, for every $z\in\bar{R_P}$ there is a least $n\ge0$ such that $a^nz\in R_P$.
If $n>0$, then $a^nz\in a\bar{R_P}\cap R_P=\overline{aR_P}=aR_P$, contradicting minimality.
Thus, $z\in R_P$, so $R_P$ is integrally closed and hence is a DVR.
Consequently, $\Ass_R R/J\subseteq \H^{{\rm reg}}_1(R)$, and Corollary~\ref{2.8} $(2)$ gives $J=(P_1^{m_1}P_2^{m_2}\cdots P_n^{m_n})^{v_R}$ $(m_i>0)$.
Therefore, $[X]=[J]=[P_1]^{m_1}[P_2]^{m_2}\cdots [P_n]^{m_n}\in \langle [P] \mid P\in \H_1^{\rm reg}(R) \rangle$.
\end{proof}

We then obtain the following characterization.

\begin{thm}\label{5.9}
Consider the following conditions.
\benu[$(1)$]
\item
$R=R^*$ and $\Cl^v(R)=0$.
\item
$R$ is Boole $v$-regular.
\item
For every $I\in\F^v(R)$, there exists $a\in I$ such that $I^2=aI$.
\item
For every $I\in\F^v(R)$, we have $I\cong \tr_R(I)$.
\item
$R=R^*$ and every $P\in \H_1^{\rm reg}(R)$ is principal.
\enenu

Then $(1)\Leftrightarrow (2)\Leftrightarrow (3)\Leftrightarrow (4) \Rightarrow (5)$ holds.
If $\bar{R}$ is a finitely generated $R$-module, then $(5)\Rightarrow (1)$ also holds.
\end{thm}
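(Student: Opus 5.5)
We plan to prove the cycle $(1)\Rightarrow(3)\Rightarrow(4)\Rightarrow(2)\Rightarrow(1)$, then deduce $(5)$ from $(1)$, and finally prove $(5)\Rightarrow(1)$ under finiteness of $\bar R$.

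\emph{$(1)\Rightarrow(3)$.} Let $I\in\F^v(R)$ and set $A=I:I\in\B^v(R)$. By Theorem~\ref{5.5}, $I\in\Inv^v(A)$. By Theorem~\ref{3.12} and Proposition~\ref{2.10}, $A$ is a strictly closed $(S_2)$ domain. Lemma~\ref{5.11a}~(1) gives a surjection $\Cl^v(R)\to\Cl^v(A)$, so $\Cl^v(A)=0$. Hence $I=aA$ for some $a\in K^\times$. Since $1\in A$, we get $a\in I$ and $I^2=a^2A=aI$.

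\emph{$(3)\Rightarrow(4)$.} If $I^2=aI$, then $A=I:I\supseteq a^{-1}I\supseteq A$, so $I=aA$. Then
\[
\tr_R(I)=I(R:I)=aA\cdot a^{-1}(R:A)=R:A ,
\]
using $A(R:A)=R:A$. Choose $d\ne0$ with $dA\subseteq R$; then $dA\in\F^v(R)$. Applying $(3)$ to $dA$ shows that $R:A=\tr_R(dA)$ also has this form. More directly: $R:A\in\F^v(R)$ by Remark~\ref{2.5}, so by $(3)$ we have $R:A=bB$ with $B=(R:A):(R:A)$. Since $R:A$ is divisorial, $B=R:((R:A)(R:(R:A)))=R:((R:A)A)=R:(R:A)=A$. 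Thus $\tr_R(I)=bA\cong A\cong I$.

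\emph{$(4)\Rightarrow(2)$.} Condition $(4)$ localizes: taking the trace commutes with localization, and an isomorphism localizes. For $P\in\H_1(R)$, every $\mathfrak a\in\F^v(R_P)$ is the localization of some element of $\F^v(R)$, as in the proof of Theorem~\ref{5.5}. So $R_P$ satisfies Theorem~\ref{3.4}~(3) and is Arf, and $R=R^*$ by Theorem~\ref{3.9}. It remains to show each constituent group is trivial. Let $I\in\F^v(R)$ and $A=I:I$. As computed in the proof of Theorem~\ref{3.4}, $(4)$ applied to $I$ and to $dA$ gives $I\cong A$. Hence every class in the $A$-component equals $[A]$, which is idempotent. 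So every element of $\calS^v(R)$ is idempotent, and $R$ is Boole $v$-regular.

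\emph{$(2)\Rightarrow(1)$.} A Boolean semigroup is Clifford, so $R=R^*$ by Theorem~\ref{5.5}. Its unit group $\Cl^v(R)$ consists of idempotent units, so $\Cl^v(R)=0$.

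\emph{$(1)\Rightarrow(5)$.} For $P\in\H_1^{\rm reg}(R)$ we have $P\in\Inv^v(R)$, as noted in the proof of Proposition~\ref{5.8}. Since $\Cl^v(R)=0$, $P$ is principal.

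\emph{$(5)\Rightarrow(1)$ when $\bar R$ is finite.} By Proposition~\ref{5.8}, $\Cl^v(R)$ is generated by the classes $[P]$ with $P\in\H_1^{\rm reg}(R)$. Each such $P$ is principal by $(5)$, so each $[P]$ is trivial and $\Cl^v(R)=0$.

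The main obstacle is $(4)\Rightarrow(2)$: one must pass carefully from the isomorphism $I\cong\tr_R(I)$ to $I\cong I:I$ in the global $(S_2)$ setting. The proof of Theorem~\ref{3.4} carries this out; it uses divisoriality of $\tr_R(I)$, which follows because (4) asserts an isomorphism with a divisorial ideal.
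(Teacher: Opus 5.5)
Your proposal is correct and uses the same ingredients as the paper: Theorem~\ref{5.5} with Lemma~\ref{5.11a}~(1), the trace argument from the proof of Theorem~\ref{3.4}, the fact that $R:A$ is a divisorial trace ideal with $(R:A):(R:A)=A$, and Proposition~\ref{5.8}, only arranged as the cycle $(1)\Rightarrow(3)\Rightarrow(4)\Rightarrow(2)\Rightarrow(1)$ rather than the paper's $(4)\Rightarrow(3)\Rightarrow(2)\Leftrightarrow(1)$ and $(2)\Rightarrow(4)$. There are two small blemishes: the sentence ``applying $(3)$ to $dA$ shows that $R:A$ also has this form'' proves nothing about $R:A$ (the ``more directly'' argument you give next is the valid one), and the localization detour to $R=R^*$ in $(4)\Rightarrow(2)$ is superfluous, since $I\cong I:I$ alone already makes every class idempotent.
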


\begin{proof}
$(4)\Rightarrow (3):$ This follows from the same argument as in the proof of $(3)\Rightarrow (2)$ in Theorem~\ref{3.4}.

$(3)\Rightarrow (2):$ This follows immediately from the definition of Boole $v$-regularity.

$(2)\Leftrightarrow (1):$ This follows from Theorem~\ref{5.5} and Lemma~\ref{5.11a} (1).

$(2)\Rightarrow (4):$ For $I\in \F^v(R)$, set $A=I:I$.
By Theorem~\ref{5.5}, we have $I, A\in \Inv^v(A)$, and Boole $v$-regularity gives $I\cong A$.
On the other hand, set $J=R:A$. Then $J$ is a trace ideal satisfying $J\in \F^v(R)$ and $J:J=R:J=R:(R:A)=A$.
The same argument gives $J\cong A$, and hence $I\cong J$.
Since the trace is invariant under isomorphism, we obtain $I\cong \tr_R(I)$.

$(1)\Rightarrow (5):$ Since $\H^{{\rm reg}}_1(R)\subseteq \Inv^v(R)$ and $\Cl^v(R)=0$, we have $P\cong R$ for every $P\in \H^{{\rm reg}}_1(R)$.

$(5)\Rightarrow (1):$ This follows from Proposition~\ref{5.8}.
\end{proof}

\begin{rem}
\benu[(1)]
\item
The equivalence of $(2)$ and $(3)$ in Theorem~\ref{5.9} also follows from \cite[Theorem 2.6]{KMN}.
In our setting, $t=v$; unless $R$ is a field, maximal $t$-ideals have height one, so the $t$-dimension is $1$.
\item
In connection with condition $(4)$ of Theorem~\ref{5.9}, the Lindo--Pande condition, which requires every ideal, not only every divisorial ideal, to be isomorphic to a trace ideal, is studied in \cite{KT}.
In this paper, we restrict attention to divisorial ideals.
\item
Arf domains are Boole $v$-regular by Theorem~\ref{3.4}.
\item
If $R$ is Boole $v$-regular, then preservation of strict closedness under localization and Lemma~\ref{5.11a} $(2)$ show that $R_Q$ is Boole $v$-regular for every $Q\in\Spec R$.
The converse does not hold.
Indeed, let $R$ be a Dedekind domain that is not a {\rm UFD}.
For every $Q\in\Spec R$, the ring $R_Q$ is a {\rm UFD}, and hence is Boole $v$-regular.
However, $\Cl^v(R)\neq 0$, so $R$ is not Boole $v$-regular.
\item
If $R$ is Boole $v$-regular, then $\calS^v(R)=\{[A]\mid A\in\B^v(R)\}$, which can be naturally identified with $\B^v(R)$.
In Proposition~\ref{5.15}, we show that in this case the $v$-product on $\calS^v(R)$ agrees with ordinary multiplication.

\enenu
\end{rem}

\begin{cor}\label{5.11}
Suppose that $R=R^*$. Let $A\in \B^v(R)$ and assume that $A$ is Boole $v$-regular.
Then, for every $B\in \B^v(R)$ satisfying $A\subseteq B$, $B$ is also Boole $v$-regular.
\end{cor}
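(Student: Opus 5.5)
We reduce to criterion $(1)$ of Theorem~\ref{5.9}, applied over the ring $B$. By Proposition~\ref{2.10} and Theorem~\ref{3.12}, both $A$ and $B$ are Noetherian domains satisfying $(S_2)$, and both are strictly closed. Hence Theorem~\ref{5.9} is available for $A$ and for $B$. Since $A$ is Boole $v$-regular, it gives $A=A^*$ and $\Cl^v(A)=0$. We already have $B=B^*$, so it suffices to show $\Cl^v(B)=0$.

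The key step is to view $B$ as an element of $\B^v(A)$ and then apply the surjection of Lemma~\ref{5.11a}~$(1)$ with $A$ in place of $R$.
\begin{itemize}
\item $B$ is a finite birational extension of $A$, because it is finite over $R$ and $R\subseteq A\subseteq B$.
\item $B$ is $A$-divisorial. Indeed, by Proposition~\ref{5.10} applied to $A\in\B^v(R)$, we have $X^{v_A}=X^{v_R}$ for every $X\in\bsF(A)$. Taking $X=B$ gives $B^{v_A}=B^{v_R}=B$.
\end{itemize}
Thus $B\in\B^v(A)$. Lemma~\ref{5.11a}~$(1)$ for the $(S_2)$ domain $A$ then yields a surjective group homomorphism $0=\Cl^v(A)\to\Cl^v(B)$, so $\Cl^v(B)=0$. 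By $(1)\Rightarrow(2)$ of Theorem~\ref{5.9} applied over $B$, the ring $B$ is Boole $v$-regular.

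The only delicate point is checking $B\in\B^v(A)$, since divisoriality is defined relative to the base ring. This is exactly where Proposition~\ref{5.10} is used, and it requires $R=R^*$.

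One could alternatively use the Clifford decomposition of Theorem~\ref{5.5} together with the identification $\calS_B^v(A)\cong\Cl^v(B)$ inside the Boolean semigroup $\calS^v(A)$. However, the surjection argument above is more direct.
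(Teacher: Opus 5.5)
Your proposal is correct and follows the paper's proof essentially step for step: Proposition~\ref{5.10} gives $B\in\B^v(A)$, Lemma~\ref{5.11a}~(1) over $A$ gives the surjection $0=\Cl^v(A)\to\Cl^v(B)$, and Theorem~\ref{5.9} concludes. The only cosmetic difference is that you obtain $B=B^*$ from Theorem~\ref{3.12} over $R$ rather than over $A$, which works equally well.
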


\begin{proof}
Proposition~\ref{5.10} gives $B\in \B^v(A)$, so Theorem~\ref{3.12} yields $B=B^*$.
Applying Lemma~\ref{5.11a} $(1)$ to $A\subseteq B$ gives $0=\Cl^v(A)\twoheadrightarrow\Cl^v(B)$, and hence $\Cl^v(B)=0$.
By Theorem~\ref{5.9}, the ring $B$ is Boole $v$-regular.
\end{proof}

\begin{cor}\label{5.12}
Suppose that $\bar{R}$ is a finitely generated $R$-module.
Then, the following conditions are equivalent.
\benu[$(1)$]
\item
There exists $A\in \B^v(R)$ such that $A$ is Boole $v$-regular.
\item
$\bar{R}$ is a {\rm UFD}.

\enenu
\end{cor}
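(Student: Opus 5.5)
The plan is to reduce both directions to Theorem~\ref{5.9} and Corollary~\ref{5.11}. The one preliminary fact needed is that $\bar R$ is itself a divisorial finite birational extension, both over $R$ and over any $A\in\B^v(R)$.

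First I show $\bar R\in\B^v(R)$. Since $\bar R$ is a finitely generated $R$-module, $\bar R\in\B(R)$. Set $\fkc=R:\bar R$. As in the proof of $(2)\Rightarrow(1)$ of Theorem~\ref{3.4}, we have $\fkc=\fkc\bar R=\tr_R(\bar R)$, and hence
\[
\bar R^{v_R}=R:\fkc=\fkc:\fkc .
\]
The ring $\fkc:\fkc$ is integral over $R$ because $\fkc$ is a nonzero finitely generated ideal, so $\bar R^{v_R}\subseteq\bar R$. Therefore $\bar R=\bar R^{v_R}\in\B^v(R)$.

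The same argument works with $R$ replaced by any $A\in\B^v(R)$. Such an $A$ is a Noetherian $(S_2)$ domain by Proposition~\ref{2.10}. Its integral closure is $\bar R$, which is finite over $A$. So $\bar R\in\B^v(A)$.

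\emph{Proof of $(2)\Rightarrow(1)$.} Assume $\bar R$ is a UFD and take $A=\bar R\in\B^v(R)$. It is an integrally closed Noetherian domain satisfying $(S_2)$. By the introduction, $\calS^v(\bar R)=\Cl^v(\bar R)$. For a Krull domain, $\Cl^v(\bar R)$ is the divisor class group, which vanishes because $\bar R$ is a UFD. Hence $\calS^v(\bar R)$ is trivial, and in particular Boolean, so $\bar R$ is Boole $v$-regular. If one prefers to stay within the paper's results: $\bar R=\bar R^*$, and Theorem~\ref{5.9} $(1)\Rightarrow(2)$ applied to $\bar R$ gives the same conclusion.

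\emph{Proof of $(1)\Rightarrow(2)$.} Let $A\in\B^v(R)$ be Boole $v$-regular. Applying Theorem~\ref{5.9} to the Noetherian $(S_2)$ domain $A$ gives $A=A^*$ and $\Cl^v(A)=0$. Now apply Corollary~\ref{5.11} with base ring $A$ in place of $R$. This is legitimate because $A=A^*$, $A\in\B^v(A)$ is Boole $v$-regular, and $\bar R\in\B^v(A)$ with $A\subseteq\bar R$. We conclude that $\bar R$ is Boole $v$-regular. Alternatively, Lemma~\ref{5.11a} $(1)$ gives a surjection $0=\Cl^v(A)\twoheadrightarrow\Cl^v(\bar R)$ directly. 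Either way $\Cl^v(\bar R)=0$. Since $\bar R$ is a Noetherian normal domain, $\Cl^v(\bar R)$ is its divisor class group, and a Krull domain with trivial divisor class group is a UFD.

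The main point needing care is the divisoriality of $\bar R$ over $R$ and over $A$, established first via the conductor identity. The remaining steps are direct applications of Theorem~\ref{5.9} and Corollary~\ref{5.11}.
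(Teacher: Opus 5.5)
Your proposal is correct and follows essentially the paper's proof: for $(2)\Rightarrow(1)$ take $A=\bar R$, and for $(1)\Rightarrow(2)$ apply Corollary~\ref{5.11} over $A$ to get that $\bar R$ is Boole $v$-regular, so $\Cl^v(\bar R)=0$ and the normal Noetherian domain $\bar R$ is a UFD. Your conductor argument showing $\bar R\in\B^v(R)$ and $\bar R\in\B^v(A)$ supplies a step the paper asserts without proof, and it is valid in the present setting without assuming $R=R^*$.
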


\begin{proof}
$(1)\Rightarrow (2):$
Since $A$ is Boole $v$-regular, we have $A=A^*$.
Also, $\bar A=\bar R$ and $\bar R\in\B^v(A)$, so Corollary~\ref{5.11} applied over $A$ shows that $\bar{R}$ is Boole $v$-regular.
As $\bar{R}$ is a Noetherian integrally closed domain and $\Cl^v(\bar{R})=0$, the ring $\bar{R}$ is a UFD (\cite[Proposition 2.2 (2)]{KM}).

$(2)\Rightarrow (1):$ We can take $A=\bar{R}$.
\end{proof}

\begin{lem}\label{6.4}
Suppose that $\Cl^v(R)=0$. If $I, J\in \F^v(R)$ satisfy ${\Ass_R R/I} \cap  {\Ass_R R/J}=\emptyset$, then $I\cap J=IJ$.

\end{lem}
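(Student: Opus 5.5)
The argument runs through the principal ideal $(IJ)^{v_R}$. Since $\Cl^v(R)=0$, I first note that the $v_R$-closure of a product of divisorial ideals with disjoint associated primes is principal; then I show that $IJ$ is already this principal ideal, after which comparing with $I\cap J$ is a local check at height-one primes.

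\emph{Step 1: $I$ and $J$ are principal.} Because $\Cl^v(R)=0$, Theorem~\ref{5.9} shows that $R$ is Boole $v$-regular and strictly closed. By $(2)\Rightarrow(4)$ of that theorem and its proof, $I\cong A$ and $J\cong B$, where $A=I:I$ and $B=J:J$ lie in $\B^v(R)$. So I write $I=aA$ and $J=bB$ with $a\in I$ and $b\in J$.

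\emph{Step 2: $I\cap J$ and $(IJ)^{v_R}$ agree.} Both are divisorial, using Remark~\ref{2.5}(1) for the intersection. By Corollary~\ref{2.3} it suffices to compare them at each $P\in\H_1(R)$. By Proposition~\ref{2.4}, $P$ cannot contain both $I$ and $J$, so at least one of $I_P,J_P$ equals $R_P$. Then $(I\cap J)_P=I_P\cap J_P=I_PJ_P$, and this is also $((IJ)^{v_R})_P$. Hence $I\cap J=(IJ)^{v_R}$.

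\emph{Step 3: $IJ$ is principal, so no closure is needed.} I expect this step to be the main obstacle: it requires $IJ=(IJ)^{v_R}$ on the nose. The plan is to show $AB=R$. Write $\fkc_A=R:A$ and $\fkc_B=R:B$. If $A\ne R$, then $\fkc_A$ is a proper divisorial ideal, and I claim $V(\fkc_A)\cap\H_1(R)\subseteq\Ass_R R/I$. Indeed, if $P\in\H_1(R)$ does not contain $I$, then $A_P=I_P:I_P=R_P$, so $(\fkc_A)_P=R_P:A_P=R_P$. The same holds for $B$. By disjointness and Proposition~\ref{2.4}, no height-one prime contains $\fkc_A+\fkc_B$, hence $(\fkc_A+\fkc_B)^{v_R}=R$.

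Next I use that $AB\in\B(R)$ and that $AB$ is contained in $A_P$ or $B_P$ locally, with the other factor trivial. This gives $(AB)_P=A_PB_P$, which equals $R_P$ or a factor's localization. I then argue that the divisorial closure $(AB)^{v_R}\in\B^v(R)$ has conductor containing $\fkc_A\fkc_B$. Locally at each height-one $P$, one of $A_P,B_P$ is $R_P$, so $(AB)_P$ is $A_P$ or $B_P$. Thus $(AB)^{v_R}$ is the intersection of the localizations, and I expect $(AB)^{v_R}=A\cap\ldots$; more precisely, I would identify $(AB)^{v_R}$ with $\bigcap_P (AB)_P$. The goal is to derive $(IJ)^{v_R}=ab\,(AB)^{v_R}$.

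Since $(IJ)^{v_R}=I\cap J$ is isomorphic to its endomorphism ring $C$ by Step 1, I would compare $C=(I\cap J):(I\cap J)$ with $(AB)^{v_R}$ locally, giving $C=(AB)^{v_R}$. The remaining point is that $AB$ itself is divisorial, which would give $IJ=abAB=ab\,C\cong I\cap J$. Together with $IJ\subseteq I\cap J$ having the same localizations, this yields $IJ=I\cap J$.

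To prove $AB$ divisorial, I first try this route. Any $x\in(AB)^{v_R}$ satisfies $x\fkc_A\fkc_B\subseteq R$. Writing $1=u+w$ with $u\in\fkc_A$ and $w\in\fkc_B$ (possible if $\fkc_A+\fkc_B=R$, which needs checking beyond height one), we get $x=xu+xw$. Here $xu$ is handled locally and lies in $B$, and $xw$ lies in $A$, so $x\in A+B\subseteq AB$.

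If $\fkc_A+\fkc_B$ is only $v$-trivial rather than equal to $R$, I would fall back on ideal-level comparison instead. Then I would directly show $I\cap J\subseteq IJ$ by the same splitting. For $x\in I\cap J$ and $1=u+w$ as above, $xu\in\fkc_AI\cdot$ lies in $I\fkc_A\subseteq aR$, and pairing it with $J$ is the delicate point I would need to resolve.
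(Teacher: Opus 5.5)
There is a genuine gap; in fact there are two. First, Step~1 misreads Theorem~\ref{5.9}. Its condition (1) is ``$R=R^*$ \emph{and} $\Cl^v(R)=0$'', but the lemma assumes only $\Cl^v(R)=0$, and nothing in the hypotheses gives strict closedness.

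Here is an example. Localize $k[t^3,t^4]$ at the complement of $(t^3,t^4)\cup(t^3-1,t^4-1)$. The resulting semi-local ring $R$ has $\Cl^v(R)=0$: in dimension one, $v$-invertible ideals are invertible by Lemma~\ref{5.a}, hence principal over a semi-local ring. However, $R$ is not Arf at $(t^3,t^4)$, since that localization has multiplicity $3$ and embedding dimension $2$. So $R\ne R^*$, and $R$ is not Boole $v$-regular. Concretely, $I=(t^3,t^4)R$ is not isomorphic to $I:I$: the value sets $\{3,4,6,7,\dots\}$ and $\{0,3,4,5,\dots\}$ are not translates. Thus the representations $I=aA$, $J=bB$ are unavailable in the generality of the statement. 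Your argument could at best cover the case $R=R^*$, which is the case used in Proposition~\ref{5.15} but not what is stated. Your Step~2, $I\cap J=(IJ)^{v_R}$, is correct and does not need Step~1.

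Second, even granting $R=R^*$, Step~3 is not proved, and it is the entire content of the lemma, namely $I\cap J\subseteq IJ$. You only show $(\fkc_A+\fkc_B)^{v_R}=R$, that is, no height-one prime contains $\fkc_A+\fkc_B$. The splitting $1=u+w$ needs $\fkc_A+\fkc_B=R$. In a local ring this forces $A=R$ or $B=R$, so the splitting handles only the trivial case, and your fallback is explicitly left open. Moreover, divisoriality of $AB$ is exactly what Proposition~\ref{5.15} proves, and the paper deduces it \emph{from} this lemma applied to $\fkc_A,\fkc_B$, so it cannot serve as input. Also, the stated aim $AB=R$ is false whenever $A\ne R$.

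The missing idea is to apply $\Cl^v(R)=0$ to a single element rather than through Boole regularity. For $0\ne x\in I\cap J$, set $H=\bigcap_{P\in\Ass_R R/I}(xR_P\cap R)$. This divisorial ideal equals $xR_Q$ for $Q\in\Ass_R R/I$ and $R_Q$ at every other height-one prime. So $H$ is $v$-invertible by Lemma~\ref{5.a}, and hence $H=aR$. Then:
\begin{enumerate}
\item $a\in H\subseteq I$, and $x=ab$ with $b\in R$;
\item $a\notin Q$ for every $Q\in\Ass_R R/J$, because $H_Q=R_Q$ there;
\item hence $a$ is regular on $R/J$, so $ab\in J$ gives $b\in J$, and $x\in IJ$.
\end{enumerate}
This uses neither strict closedness nor any comaximality.
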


\begin{proof}
It suffices to consider the case $I\ne R$ and $J\ne R$.
Let $0\ne x\in I\cap J$ and set
\[
H=\bigcap_{P\in\Ass_R R/I}(xR_P\cap R).
\]
Each $xR_P\cap R$ is $P$-primary and is divisorial, since
\[
(xR_P\cap R)^{v_R}\subseteq
\bigl((xR_P\cap R)^{v_R}\bigr)_P\cap R
=(xR_P)^{v_{R_P}}\cap R=xR_P\cap R.
\]
Thus, $H$ is divisorial.
For each $Q\in\H_1(R)$, we have $H_Q=xR_Q$ if $Q\in \Ass_R R/I$, and $H_Q=R_Q$ if $Q\notin \Ass_R R/I$.
By Lemma~\ref{5.a}, we have $H\in\Inv^v(R)$.
Since $\Cl^v(R)=0$, we can write $H=aR$.
The inclusions $xR\subseteq H\subseteq I$ imply $a\in I$, and there exists $b\in R$ such that $x=ab$.
By assumption, $Q\in\Ass_R R/J $ implies $Q\notin \Ass_R R/I$, so $aR_Q=H_Q=R_Q$ and $a\notin Q$.
Hence, $a$ is a non-zerodivisor on $R/J$, and $ab=x\in J$ gives $b\in J$.
Therefore, $x\in IJ$. Since the reverse inclusion is clear, this completes the proof.
\end{proof}

\begin{prop}\label{5.15}
If $R$ is Boole $v$-regular, then $XY\in \bsF^v(R)$ for every $X, Y\in \bsF^v(R)$.
Consequently, the $v$-products on $\bsF^v(R)$ and $\calS^v(R)$ agree with ordinary multiplication.
\end{prop}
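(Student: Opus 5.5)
By Theorem~\ref{5.9}, Boole $v$-regularity gives $R=R^*$ and $\Cl^v(R)=0$, and every $X\in\bsF^v(R)$ is isomorphic to a ring in $\B^v(R)$. By the last remark, $\calS^v(R)=\{[A]\mid A\in\B^v(R)\}$. Divisoriality is preserved under multiplication by elements of $K^\times$, so we may replace $X$ and $Y$ by isomorphic copies. It therefore suffices to prove that $AB\in\bsF^v(R)$ for all $A,B\in\B^v(R)$. Note that $AB$ is the ring generated by $A$ and $B$, and it lies in $\B(R)$.

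The plan is to show that $AB=(AB)^{v_R}$, and the key step is to compute $C:=(AB)^{v_R}$. This $C$ is an $R$-divisorial fractional ideal with $C\supseteq AB\supseteq A$. Moreover $C$ is a ring, because $C\cdot C\subseteq (AB\cdot AB)^{v_R}=(AB)^{v_R}=C$. Thus $C\in\B^v(R)$, and in fact $C\in\B^v(A)$ by Proposition~\ref{5.10}.

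The main obstacle is that $C$ might a priori be strictly larger than $AB$, since the ordinary product need not be divisorial. I would try to exploit Boole regularity of $A$: by Corollary~\ref{5.11}, $A$ is Boole $v$-regular. Consider the $A$-module $BA=AB$ and set $J=A:AB=A:C$. This is an $A$-divisorial ideal, hence $R$-divisorial. Since $J$ is a $C$-module, it is a trace ideal of $A$, and its endomorphism ring is $J:J=A:J=C$.

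The next step is to use condition $(3)$ of Theorem~\ref{5.9} over $A$. This gives $J^2=aJ$ for some $a\in J$, and hence $J=aC$. Then
\[
AB\supseteq a^{-1}J\cdot B,
\]
but it is not yet clear how to turn this into $AB=C$. An alternative is to localize. For $P\in\H_1(R)$, the ring $R_P$ is a one-dimensional Arf domain, and $A_P,B_P$ are Arf overrings. For Arf rings, birational overrings of the form $I:I$ with $I$ integrally closed are blowups, and the compositum of two such overrings is the blowup of the product ideal $I_AI_B$, which is again divisorial in dimension one. So $(AB)_P=C_P$ for every $P\in\H_1(R)$.

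Globally, we need $AB$ itself to be divisorial, that is, $(S_2)$-like. This is where $\Cl^v=0$ must enter. I would write $C=R:(I_AI_B)$ with $I_A=R:A$ and $I_B=R:B$, and then use Lemma~\ref{6.4}-type arguments with principal $v$-invertible pieces to show that $I_AI_B$ is divisorial and that $C=AB$.

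Finally, the consequence follows directly. Once $XY\in\bsF^v(R)$ for all $X,Y\in\bsF^v(R)$, we have $(XY)^{v_R}=XY$, so the $v$-products on $\bsF^v(R)$ and $\calS^v(R)$ agree with ordinary multiplication.
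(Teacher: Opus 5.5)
There is a genuine gap. Your reduction to showing that $AB$ is divisorial for $A,B\in\B^v(R)$ is fine, and so is the observation that $C=(AB)^{v_R}$ is a ring in $\B^v(R)$. But the argument stops exactly where the work begins. You abandon the route through $J=A:C=aC$. The localization route cannot close the gap. Even granting $(AB)_P=C_P$ for every $P\in\H_1(R)$, Corollary~\ref{2.3} gives $AB=C$ only if $AB$ is already known to be divisorial, which is the statement to be proved. The local claim is also unproved: in dimension one divisoriality is not automatic (over the Arf ring $k[[t^3,t^4,t^5]]$ the overring $k[[t^2,t^3]]$ is not divisorial), so ``again divisorial in dimension one'' is precisely the one-dimensional case of the proposition. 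Your closing suggestion fails as stated too. Lemma~\ref{6.4} needs $\Ass_R R/I_A\cap\Ass_R R/I_B=\emptyset$, which is false in general (e.g.\ $A=B\neq R$, or $R\subsetneq A\subsetneq B$). Also, $C=R:(I_AI_B)$ holds only up to isomorphism: for $R=k[[t^2,t^3]]$ and $A=B=k[[t]]$ one gets $R:(I_AI_B)=t^{-2}k[[t]]\neq C$.

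The missing idea is a change of base ring that forces this disjointness. Put $D=A\cap B$. Then $D\in\B^v(R)$ by Remark~\ref{2.5}, $D$ is Boole $v$-regular by Corollary~\ref{5.11}, and $v_D=v_R$ on $\bsF(D)$ by Proposition~\ref{5.10}, so you may assume $R=A\cap B$. Suppose $P\in\Ass_R R/I_A\cap\Ass_R R/I_B$. Then $P\in\H_1(R)$ is divisorial by Proposition~\ref{2.4}, while $R:P\subseteq(R:I_A)\cap(R:I_B)=A\cap B=R$, forcing $P=R:(R:P)=R$. Hence the associated primes are disjoint, and Lemma~\ref{6.4} gives $I_AI_B=I_A\cap I_B$, which is divisorial; this is where $\Cl^v=0$ enters. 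Finally, $I_A$ is a trace ideal with $I_A:I_A=R:I_A=A$. Boole $v$-regularity, in the form $I=a(I:I)$ for $I\in\F^v(R)$ coming from Theorem~\ref{5.9}$(3)$, then yields $I_A\cong A$ and likewise $I_B\cong B$, so $AB\cong I_AI_B$ is divisorial. This is the paper's argument; without the reduction to $R=A\cap B$, Lemma~\ref{6.4} simply does not apply.
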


\begin{proof}
Set $A=X:X$ and $B=Y:Y$.
By Theorem~\ref{5.9}, we can write $X=\alpha A$ and $Y=\beta B$, so it suffices to show that $AB$ is $R$-divisorial.
Set $C=A\cap B\in\B^v(R)$.
Theorem~\ref{3.12} and Lemma~\ref{5.11a} $(1)$ give $C=C^*$ and $\Cl^v(C)=0$, so $C$ is Boole $v$-regular.
By Proposition~\ref{5.10}, we have $A,B\in\B^v(C)$, and the $v_C$-closure and the $v_R$-closure agree on fractional ideals over $C$.
We may therefore replace the base ring by $C$ and assume that $R=A\cap B$.

Set $C_A=R:A$ and $C_B=R:B$.
If $P\in\Ass_R R/C_A\cap\Ass_R R/C_B $, then
\[
R:P\subseteq R:C_A=A,\qquad R:P\subseteq R:C_B=B
\]
and hence $R:P=R$.
However, since $P$ is divisorial, we have $P=R:(R:P)=R$, which is a contradiction.
Thus, $\Ass_R R/C_A \cap \Ass_R R/C_B=\emptyset$, and Lemma~\ref{6.4} gives $C_AC_B=C_A\cap C_B$.
Since $C_A:C_A=A$ and $C_B:C_B=B$, Boole $v$-regularity yields $C_A\cong_RA$ and $C_B\cong_RB$.
Therefore, $AB\cong C_A\cap C_B$, so $AB$ is divisorial.
\end{proof}

\begin{ex}\label{ex:boolean-chain}
Let $R_n=k[[t^2, t^{2n+1}]]$ be as in Example~\ref{ex:arf-chain}. Then $R_n$ is Boole $v$-regular, and
\[
 \calS^v(R_n)=\{[R_i] \mid 0\le i\le n\},\qquad
 [R_i][R_j]=[R_{\min\{i,j\}}].
\]
\end{ex}

\begin{ex}\label{ex:boole-surface}
Let $k=\mathbb Q$, $L=\mathbb Q(\sqrt2)$, $V=L[[s,t]]$, and $D=k[[s]]$, and set
\[
 R=D+tV
 \cong k[[s,u,v]]/(v^2-2u^2),
\]
where the isomorphism is given by $u\mapsto t$ and $v\mapsto\sqrt2t$.
Then $R$ is Boole $v$-regular, and
\[
\B^v(R)=\{R,V\},\qquad
 \Cl^v(R)=0,\qquad\calS^v(R)=\{[R],[V]\}.
\]
\end{ex}

\begin{proof}
Since $R=k[[s,t]]\oplus(\sqrt2t)k[[s,t]]$, we obtain the above hypersurface presentation.
Thus, $R$ is a two-dimensional Cohen-Macaulay local domain with integral closure $V=R[\sqrt2]$ and conductor $\mathfrak p=tV$.
By the same calculation as in Example~\ref{ex:pinch}, we obtain $R=R^*$ and $\B^v(R)=\{R,V\}$.

We use Proposition~\ref{5.8} to prove $\Cl^v(R)=0$.
As in Example~\ref{ex:pinch}, every $P\in\H_1^{\rm reg}(R)$ can be written as $P=P_f=fV\cap R$ for an irreducible element $f$ of $V$ satisfying $f(s,0)\ne0$.
Write $f(s,0)=s^mh(s)$ ($m\ge1$, $h(s)\in L[[s]]^\times$), and set $g=h(s)^{-1}f$.
Then $P=P_g=gV\cap R$, and $g(s,0)=s^m$ implies $g\in D+tV=R$.
If $a\in V$ satisfies $ga\in R$, then
\[
 s^ma(s,0)=(ga)(s,0)\in D=k[[s]].
\]
Since $a(s,0)\in L[[s]]$, we obtain $a(s,0)\in k[[s]]=D$ by comparing coefficients.
Hence, $a\in D+tV=R$, which yields $gV\cap R=gR$.
Thus, every element of $\H_1^{\rm reg}(R)$ is principal, and Proposition~\ref{5.8} gives $\Cl^v(R)=0$.

By Theorems~\ref{5.5} and \ref{5.9}, $R$ is Boole $v$-regular, and $\calS^v(R)=\{[R],[V]\}$.
\end{proof}

\section{Prime ideal decompositions of integrally closed ideals and local Boole \texorpdfstring{$v$}{v}-regularity}\label{sec:decomposition}

In this section, we construct a prime ideal decomposition of integrally closed divisorial ideals of strictly closed domains and determine when this decomposition holds under ordinary ideal multiplication.

We begin with the following lemma on the endomorphism rings of integrally closed ideals.

\begin{lem}[{\cite[Lemma 3.1, Proposition 3.2]{I1}}]\label{6.a}
Let $(0)\neq I$ be an integrally closed ideal of $R$. Then $I:I=(R:I)\cap \bar{R}$.
Consequently, if $I, J$ are integrally closed ideals of $R$ with $(0)\ne I\subseteq J$, then $I:I\supseteq J:J$.
\end{lem}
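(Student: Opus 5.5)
We prove the equality $I:I=(R:I)\cap\bar R$ by double inclusion; the monotonicity statement then follows formally.

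For $\subseteq$, take $x\in I:I$. Since $xI\subseteq I\subseteq R$, we get $x\in R:I$. Moreover $I$ is a nonzero finitely generated $R$-module with $xI\subseteq I$, so the determinant trick shows $x$ is integral over $R$; as $x\in K$, this gives $x\in\bar R$.

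For $\supseteq$, take $x\in(R:I)\cap\bar R$. Then $xI\subseteq R$, and it remains to show $xI\subseteq\bar I$.
\begin{itemize}
\item Let $a\in I$. Since $x$ is integral over $R$, the ring $R[x]$ is a finitely generated $R$-module.
\item The element $xa$ satisfies $xa\cdot IR[x]\subseteq IR[x]$, with $IR[x]$ a nonzero finitely generated module over $R$.
\item A cleaner route avoids the determinant trick here: use the characterization $\bar I=I\bar R\cap R$. This holds in a Noetherian domain for the integral closure of ideals; for example, $\bar I=\bigcap_V IV\cap R$ over valuation overrings $V$, and the integral closure of $I\bar R$ is contained in $\bigcap_V IV$.
\item Concretely, for any valuation overring $V$ of $R$ we have $x\in\bar R\subseteq V$, hence $xa\in IV$. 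Therefore $xa\in\bigcap_V IV\cap R=\bar I$.
\end{itemize}
Since $I$ is integrally closed, $\bar I=I$, so $xa\in I$. Thus $xI\subseteq I$, that is, $x\in I:I$. The only point needing care is the valuative criterion $\bar I=\bigcap_V IV\cap R$, which is standard.

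For the consequence, let $(0)\ne I\subseteq J$ with both integrally closed. Then $R:J\subseteq R:I$, and hence
\[
J:J=(R:J)\cap\bar R\subseteq(R:I)\cap\bar R=I:I .
\]
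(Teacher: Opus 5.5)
Your proof is correct and has the same skeleton as the paper's. The inclusion $I:I\subseteq(R:I)\cap\bar R$ is immediate. For $x\in(R:I)\cap\bar R$ and $a\in I$, the key point is that $xa$ lies in $a\bar R\cap R\subseteq I\bar R\cap R$, and that this set is contained in $\bar I=I$.

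You differ only in how you justify that last containment:
\begin{itemize}
\item The paper argues one element at a time. It uses the elementary identity $a\bar R\cap R=\overline{aR}$ for $0\ne a\in R$, which you get by multiplying an equation of integral dependence of $x$ over $R$ by powers of $a$. It then uses monotonicity, $\overline{aR}\subseteq\bar I$. This needs nothing beyond the definition of integral dependence.
\item You invoke the valuative criterion $\bar I=\bigcap_V IV\cap R$. It is valid in any domain, but it is considerably heavier machinery than this step requires.
\end{itemize}

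There is one misstatement. The ``characterization'' $\bar I=I\bar R\cap R$ that you quote is false as an equality, even in Noetherian normal domains. For $R=k[x,y]$ and $I=(x^2,y^2)$ we have $\bar R=R$, so $I\bar R\cap R=I$, yet $xy\in\bar I\setminus I$. The correct general statement is $\bar I=\overline{I\bar R}\cap R$. What your argument actually uses is only the valid inclusion $I\bar R\cap R\subseteq\bar I$, so the proof is unaffected.

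Your first two bullets are a dead end as written, but the determinant trick does work there if you apply it to the right inclusion. Use $xa\,R[x]\subseteq a\,R[x]\subseteq I\,R[x]$ for the faithful, finitely generated $R$-module $R[x]$; this gives $xa\in\bar I$ directly.
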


\begin{proof}
The inclusion $I:I\subseteq (R:I)\cap \bar R$ is clear.
Let $z\in(R:I)\cap\bar R$ and take $0\ne y\in I$.
Since $yz\in y\bar{R} \cap R=\bar{yR}\subseteq \bar I=I$, we have $zI\subseteq I$, that is, $z\in I:I$.
The last assertion follows from $R:I\supseteq R:J$.
\end{proof}

Throughout the remainder of this section, we assume that $R=R^*$.
For an integrally closed ideal $I\in\F^v(R)$, we set $R_{(I,0)}=R$ and $I_0=I$.
Suppose that $R_{(I,i)}$ and an ideal $I_i$ of $R_{(I,i)}$ have been constructed.
We take the radical $\sqrt{I_i}$ in $R_{(I,i)}$ and set
\[
 R_{(I,i+1)}=\sqrt{I_i}:\sqrt{I_i},\qquad
 I_{i+1}=I_i:\sqrt{I_i}\qquad(i\ge0).
\]
Since $I_{i+1}\subseteq R_{(I,i+1)}$ and $I_{i+1}R_{(I, i+1)}=I_{i+1}$, $I_{i+1}$ is an ideal of $R_{(I,i+1)}$. Thus, the construction can be continued.
In particular, each $I_i$ is an ideal of $R_{(I,i)}$, and
\[
 R=R_{(I,0)}\subseteq R_{(I,1)}\subseteq\cdots\subseteq R_{(I,i)}\subseteq\cdots\subseteq\bar R.
\]
We now prove the following properties of these rings and ideals.

We fix an integrally closed ideal $I\in \F^v(R)$, and set $R_i=R_{(I,i)}$ and $A=I:I$.
The above construction, together with Lemma~\ref{6.2} and Proposition~\ref{6.b} below, extends the construction for one-dimensional Arf rings in \cite[Definition 4.1, Lemma 4.3, Theorem 4.7]{I1} to arbitrary dimension.

\begin{lem}\label{6.2}
For every $i\ge 1$, the following assertions hold.
\benu[$(1)$]
\item
$R_i=R_i^*$ and $R_{i}\subseteq I_{i-1}:I_{i-1}$.
\item
$I_{i}\in \F^v(R_{i})$, and $I_i$ is an integrally closed ideal of $R_{i}$.
\item
$I_{i-1}=(\sqrt{I_{i-1}}I_{i})^{v_R}$.
\item
$I_i:I_i=A$.
\enenu
Moreover, $R_{i}\in\B^v(R)$ and $I_i\in\bsF^v(R)$ for every $i\ge0$.

\end{lem}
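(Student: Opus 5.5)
We argue by induction on $i\ge 1$, carrying along the auxiliary statement that $R_{i-1}\in\B^v(R)$, $R_{i-1}=R_{i-1}^*$, and $I_{i-1}\in\F^v(R_{i-1})$ is an integrally closed ideal of $R_{i-1}$ with $I_{i-1}:I_{i-1}=A$. For $i-1=0$ this is the hypothesis, with $A=I:I$. Write $B=R_{i-1}$ and $J=I_{i-1}$. By Proposition~\ref{2.10} and Theorem~\ref{3.12}, $B$ is a strictly closed Noetherian $(S_2)$ domain. By Proposition~\ref{5.10}, $v_B$ and $v_R$ agree on $\bsF(B)$, and by Remark~\ref{overring-reflexive}, $\bsF^v(B)\subseteq\bsF^v(R)$. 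Hence every step may be carried out over $B$, and we may assume $B=R$. If $J=R$, everything is trivial, so we assume $J\ne R$.

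First, Corollary~\ref{2.8}(1) gives $\sqrt J\in\F^v(R)$. Since $\sqrt J$ is radical, it is integrally closed, and Theorem~\ref{5.5} applied to $\sqrt J$ shows that $R_i=\sqrt J:\sqrt J\in\B^v(R)$. Theorem~\ref{3.12} then gives $R_i=R_i^*$. By Remark~\ref{2.5}(3), $I_i=J:\sqrt J$ is $R$-divisorial, hence $R_i$-divisorial by Proposition~\ref{5.10}. For integral closedness and the remaining assertions we localize at $P\in\H_1(R)$, using Corollary~\ref{2.3} and Proposition~\ref{2.2}(1). Both integral closedness and the equalities to be proved can be checked locally at height-one primes, since all the modules involved are divisorial. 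If $P\notin V(J)$, everything is the unit ideal and there is nothing to check. For $P\in V(J)\cap\H_1(R)$, the ring $R_P$ is a one-dimensional Arf local ring, and $\sqrt J R_P=PR_P$ is its maximal ideal. We are then exactly in the setting of \cite[Lemma 4.3, Theorem 4.7]{I1}. There, for an integrally closed ideal $J$ of an Arf ring with maximal ideal $\m$, one has $\m:\m\subseteq J:J$ (Lemma~\ref{6.a}, using $J\subseteq\m$), and $J:\m$ is an integrally closed ideal of $\m:\m$. Moreover $\m$ is stable ($\m^2=a\m$), so $\m=a(\m:\m)$, and therefore $J:\m=a^{-1}J$ up to the identification $J=\m(J:\m)$. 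In particular $J=\m\cdot(J:\m)$ and $(J:\m):(J:\m)=J:J$.

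Gluing these local statements yields the claims. Assertion (1) follows since $R_i\subseteq J:J$ holds locally at every $P\in\H_1(R)$, and both sides are divisorial. Assertion (2) follows because $I_i$ is contained in $R_i$, is locally integrally closed, and the integral closure of a divisorial ideal is the intersection of its localizations. Assertion (3) follows from $J_P=(\sqrt J I_i)_P$ for all $P$. Assertion (4) follows from $(I_i:I_i)_P=J_P:J_P=A_P$. This also closes the induction.

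The main obstacle is the local step. We must verify carefully that $J:\m$ is integrally closed in $\m:\m$ and that $(J:\m):(J:\m)=J:J$; the second identity uses $J=\m(J:\m)$ together with $\m\cong\m:\m$. A further subtle point is that $J_P:\m_P$ is computed inside $K$, where it equals $a^{-1}J_P$, so the endomorphism rings coincide immediately. Integral closedness of $a^{-1}J_P$ over $R_{i,P}$ then follows from $a^{-1}J_P=J_P\bar R_P a^{-1}\cap R_{i,P}$, which I would take from \cite{I1}.
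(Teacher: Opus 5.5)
Your proof is correct, but its core step takes a different route from the paper's.

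\textbf{The paper's route.} It argues globally for $i=1$ and then inductively.
\begin{enumerate}
\item Integral closedness of $I_1$ comes from a direct integral-dependence argument: if $z\in R_1$ is integral over $I_1$ and $q\in\sqrt I$, then $qz\in\sqrt I\subseteq R$ is integral over $I$, hence lies in $I$. This uses no Arf input at all.
\item Assertion (3) comes from the $v$-invertibility of $\sqrt I$ over $R_1=\sqrt I:\sqrt I$ given by Theorem~\ref{5.5}(3). This yields $I(R_1:\sqrt I)\subseteq I_1$ and the squeeze $I\subseteq(\sqrt I\, I_1)^{v_{R_1}}\subseteq I$.
\item Assertion (4) is then deduced from (3).
\end{enumerate}

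\textbf{Your route.} After the same reduction to $B=R$ via Propositions~\ref{2.10} and \ref{5.10}, Theorem~\ref{3.12} and Remark~\ref{overring-reflexive}, you localize at $P\in\H_1(R)$. You use the stability $PR_P=a(PR_P:PR_P)$ of the maximal ideal of the one-dimensional Arf ring $R_P$ (Theorems~\ref{3.9} and \ref{3.2}). Everything then reduces to the single identity $J_P:PR_P=a^{-1}J_P$. This identity holds outright, with no identification needed, because $(PR_P:PR_P)J_P=J_P$ by Lemma~\ref{6.a}. You then glue using Corollary~\ref{2.3}, Proposition~\ref{2.2}(1), and the fact that integral closure of ideals commutes with localization.

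\textbf{Comparison.} Your route makes all four assertions visible at once and bypasses the Clifford machinery of Theorem~\ref{5.5}. The cost is a local--global step for integral closedness. The paper's route is shorter and keeps integral closedness completely elementary.

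\textbf{The step you deferred.} The point you planned to take from the literature is immediate. Suppose $z\in(R_i)_P$ satisfies an equation of integral dependence of degree $m$ over $a^{-1}J_P$. Multiplying it by $a^m$ shows that $az$ is integral over $J_P$. Since $az\in a(PR_P:PR_P)=PR_P\subseteq R_P$, it follows that $az\in J_P$. This is exactly the paper's argument with $q=a$.
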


\begin{proof}
If $I=R$, then $R_i=I_i=A=R$ for every $i\ge0$, and all assertions are clear.
Suppose that $I\ne R$. First, we prove the assertions for $i=1$.

$(1)$
By Corollary~\ref{2.8} (1), we have $\sqrt{I}\in \F^v(R)$, so $R_1\in \B^v(R)$ and $I_1\in \bsF^v(R)$.
Hence, Theorem~\ref{3.12} gives $R_1=R_1^*$.
Moreover, Lemma~\ref{6.a} yields $R_1=\sqrt{I}:\sqrt{I} \subseteq I:I$.

$(2)$ We have already shown that $I_1$ is an ideal of $R_1$.
We show that it is integrally closed.
Let $z\in\overline{I_1}\subseteq R_1$ and write
\[
 z^m+b_1z^{m-1}+\cdots+b_m=0,
\]
where $b_j\in (I_1)^j$. For every $q\in \sqrt{I}$, we have $qz\in \sqrt{I}\subseteq R$ and
\[
 (qz)^m+(qb_1)(qz)^{m-1}+\cdots+q^mb_m=0,
 \quad q^jb_j\in(qI_1)^j\subseteq I^j.
\]
It follows that $qz\in \bar{I}=I$, so $z\sqrt{I} \subseteq I$, that is, $z\in I_1$.
Thus, $I_1$ is an integrally closed ideal of $R_1$.
Moreover, Proposition~\ref{5.10} gives $I_1^{v_{R_1}}=I_1^{v_R}=I_1$, and hence $I_1\in \F^v(R_1)$.
 
$(3)$
Similarly, $IR_1\subseteq IA=I$ implies $I\in \F(R_1)$, and hence $I\in \F^v(R_1)$.
Applying Theorem~\ref{5.5} (3) to $\sqrt{I}$, we obtain $\sqrt{I} \in\Inv^v(R_1)$.
Thus, $ (\sqrt{I}(R_1: \sqrt{I}))^{v_{R_1}}=R_1$, and
$$I=(IR_1)^{v_{R_1}}=(I(\sqrt{I}(R_1: \sqrt{I}))^{v_{R_1}})^{v_{R_1}}=(\sqrt{I}I(R_1: \sqrt{I}))^{v_{R_1}}\subseteq (\sqrt{I}I_1)^{v_{R_1}}\subseteq I^{v_{R_1}}=I.$$
Therefore, Proposition~\ref{5.10} yields $ I=(\sqrt{I}I_1)^{v_{R_1}}=(\sqrt{I}I_1)^{v_{R}}$.

$(4)$
If $u\in A$, then $u\sqrt{I}I_1\subseteq uI\subseteq I$ implies $uI_1\subseteq I_1$, so $u\in I_1:I_1$.
Conversely, if $0\neq u\in I_1:I_1$, then $uI=(u\sqrt{I}I_1)^{v_R}\subseteq(\sqrt{I}I_1)^{v_R}=I$ gives $u\in A$.
Hence, $I_1:I_1=A$.

Applying the above argument inductively, with the case $I_{i-1}=R_{i-1}$ being immediate, we obtain the assertions.
When $R_{i-1}\in\B^v(R)$, Remark~\ref{overring-reflexive} shows that both the ring $R_i\in\B^v(R_{i-1})$ and the $R_{i-1}$-divisorial ideal $I_i$ are $R$-divisorial. This completes the proof.
\end{proof}

\begin{prop}[{cf.\ \cite[Theorem 4.7, Corollary 4.8]{I1}}]\label{6.b}
The following assertions hold.
\benu[$(1)$]
\item
There exists $n\ge 0$ such that $I_{\ell}=R_{\ell}=A$ for every $\ell \ge n$.
\item
There exists $n\ge 0$ such that $I=(\sqrt{I}\sqrt{I_1}\cdots \sqrt{I_n})^{v_R}$.
\enenu

\end{prop}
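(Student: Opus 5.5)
The plan is to prove (1) first and then obtain (2) from it by iterating Lemma~\ref{6.2} (3).

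For (1), Lemma~\ref{6.2} gives an ascending chain $R=R_0\subseteq R_1\subseteq\cdots$ in $\B^v(R)$, and Lemma~\ref{6.2} (1) and (4) give $R_i\subseteq I_{i-1}:I_{i-1}=A$ for every $i\ge1$. So the chain lies between $R$ and $A$. Since $A$ is a finitely generated $R$-module and $R$ is Noetherian, the chain stabilizes: there is $m$ with $R_{\ell}=R_m$ for all $\ell\ge m$.

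Next I show that stabilization forces $I_m=R_m$. Suppose $R_{m+1}=R_m$ and $I_m\ne R_m$. Then $\sqrt{I_m}:\sqrt{I_m}=R_m$, where $\sqrt{I_m}$ is a proper divisorial ideal of $R_m$ by Corollary~\ref{2.8} (1) applied over $R_m$. By Theorem~\ref{5.5} (3) over $R_m$, which is strictly closed by Lemma~\ref{6.2} (1), we get $\sqrt{I_m}\in\Inv^v(R_m)$. Hence $I_{m+1}=I_m:\sqrt{I_m}$ is the $v$-product of $I_m$ with the $v$-inverse of $\sqrt{I_m}$.

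To control this, I localize at the height-one primes $Q$ of $R_m$ containing $I_m$. Each $(R_m)_Q$ is one-dimensional local, and $\sqrt{I_m}R_Q=QR_Q$ is principal, say $QR_Q=xR_Q$. Since $QR_Q:QR_Q=R_Q$, the ring $R_Q$ is a DVR, and then $I_{m+1}R_Q=x^{-1}I_mR_Q$ strictly decreases the $Q$-adic order. This decrease is the key mechanism.

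The cleanest way to assemble this into a termination argument is via a length measure. Localize at $P\in\H_1(R)$ containing $I$; there are only finitely many such $P$. Over the one-dimensional semilocal ring $R_P$, I would show $\ell_{R_P}(R_{i}{}_P/I_{i}{}_P)$ strictly drops at each step where $I_i\ne R_i$. Indeed, $I_{i+1}=I_i:\sqrt{I_i}\supsetneq I_i$ locally: we have $\sqrt{I_i}\ne R_i$, and $(I_i:\sqrt{I_i})\ne I_i$ because $\sqrt{I_i}$ is $v$-invertible over $R_{i+1}$ (Lemma~\ref{6.2} with Theorem~\ref{5.5} (3)) and $I_i$ is an $R_{i+1}$-module. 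Moreover $I_{i+1}\subseteq R_{i+1}$, and all these modules sit inside $A$.

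So I use the measure $\ell(A_P/I_{i,P})$, summed over the finitely many $P$. It is a nonnegative integer, since $I\subseteq I_i\subseteq A$ and $A/I$ has finite length at each $P\in V(I)\cap\H_1(R)$, because $I$ is divisorial and locally primary. It strictly decreases while $I_i\ne R_i$: if $I_{i+1,P}=I_{i,P}$ for all $P$, then $I_{i+1}=I_i$ by Corollary~\ref{2.3}, which contradicts $v$-invertibility of a proper $\sqrt{I_i}$ over $R_{i+1}$.

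Hence $I_n=R_n$ for some $n$. Then Lemma~\ref{6.2} (4) gives $A=I_n:I_n=R_n:R_n=R_n$. After that the construction is constant, since $\sqrt{R_n}=R_n$ gives $R_{n+1}=R_n$ and $I_{n+1}=R_n$. This proves (1).

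For (2), take $n$ from (1) with $I_{n+1}=A$. Iterating Lemma~\ref{6.2} (3) gives
\[
I=(\sqrt{I}\,(\sqrt{I_1}\cdots(\sqrt{I_n}\,I_{n+1})^{v_R}\cdots)^{v_R})^{v_R}.
\]
Using associativity of the $v$-product, this collapses to $I=(\sqrt{I}\sqrt{I_1}\cdots\sqrt{I_n}A)^{v_R}$.

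It remains to remove the factor $A$. Because $\sqrt{I_i}$ is an ideal of $R_{i}\subseteq A$ and $I$ is an $A$-module, multiplying by $A$ inside the $v$-closure does no harm. The containment $I\supseteq(\sqrt{I}\cdots\sqrt{I_n})^{v_R}$ holds since the product lies in $I$ (as $\sqrt{I_n}\subseteq R_n\subseteq A$). For the reverse containment, $\sqrt{I_n}=A$ once $I_n=A$, so I choose $n$ minimal-plus-one so that the last factor is $A$ itself. This gives $I=(\sqrt{I}\sqrt{I_1}\cdots\sqrt{I_n})^{v_R}$.

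The main obstacle is the strict decrease step in (1), namely showing $I_i:\sqrt{I_i}\ne I_i$ when $I_i\ne R_i$. I expect to handle it locally in the one-dimensional Arf setting via Lemma~\ref{5.3}, where $\sqrt{I_i}$ is principal over $R_{i+1}$, say $\sqrt{I_i}R_{i+1,P}=xR_{i+1,P}$. Then $I_{i+1,P}=x^{-1}I_{i,P}\supsetneq I_{i,P}$, since $x$ lies in the Jacobson radical.
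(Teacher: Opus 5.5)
Your argument is correct and has the same skeleton as the paper's. The chain $I=I_0\subseteq I_1\subseteq\cdots\subseteq A$ strictly increases while $I_i\ne R_i$, so it reaches some $I_n=R_n$. Then $A=I_n:I_n=R_n$, and (2) follows by iterating Lemma~\ref{6.2}~(3).

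The difference lies in the strict-increase step. The paper proves it by an elementary Noetherian argument that needs no Arf input. At a stable index, let $r$ be least with $(\sqrt{I_n})^r\subseteq I_n$; then any $b\in(\sqrt{I_n})^{r-1}\setminus I_n$ lies in $I_n:\sqrt{I_n}=I_n$, a contradiction. Termination is just the ascending chain condition in the finitely generated $R$-module $A$.

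You instead use strict closedness via Theorem~\ref{5.5}~(3) to get $\sqrt{I_i}\in\Inv^v(R_{i+1})$. You then use local principality and Nakayama, and replace the chain condition by a length count justified through Corollary~\ref{2.3}. This is heavier than necessary. In exchange, it exhibits the local content of Lemma~\ref{6.2}~(3): locally, $I_{i+1}=x^{-1}I_i$ for a local generator $x$ of the radical.

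Two details need fixing.
\begin{enumerate}
\item A generator of $\sqrt{I_i}(R_{i+1})_P$ need not lie in the Jacobson radical of the semilocal ring $(R_{i+1})_P$, since $\sqrt{I_i}$ may lie in only some of the maximal ideals over $P$. Instead, localize at some $Q\in\H_1(R_{i+1})$ containing the proper $R_{i+1}$-divisorial ideal $\sqrt{I_i}$, which exists by Proposition~\ref{2.4}. Principality there comes from Lemma~\ref{5.a}; Lemma~\ref{5.3} is stated only for local base rings.
\item In (2), the remark that multiplying by $A$ ``does no harm'' is not itself a proof. Your index shift works: choose $n$ with $I_n=A$, so that $\sqrt{I_n}=A$. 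The paper's observation also works: $\sqrt{I_n}I_{n+1}=\sqrt{I_n}R_{n+1}=\sqrt{I_n}$, because $R_{n+1}=\sqrt{I_n}:\sqrt{I_n}$.
\end{enumerate}
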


\begin{proof}
$(1)$ For each $i$, we have $\sqrt{I_i}\subseteq R_i$ and $I_iR_i=I_i$, so
\[
 I_i\subseteq I_i:\sqrt{I_i}=I_{i+1}\subseteq R_{i+1}\subseteq A.
\]
We consider the ascending chain
\[
 I_0\subseteq I_1\subseteq I_2\subseteq\cdots\subseteq A
\]
of submodules of the finitely generated $R$-module $A$.
There exists $n\geq0$ such that $I_n=I_{n+1}$.
If $I_n\neq R_n$, choose the least $r>0$ such that $(\sqrt{I_n})^r\subseteq I_n$, and take $b\in (\sqrt{I_n})^{r-1}\setminus I_n$.
Then
\[
 b\sqrt{I_n}\subseteq (\sqrt{I_n})^r\subseteq I_n
\]
implies $b\in I_n:\sqrt{I_n}=I_{n+1}=I_n$, which is a contradiction.
Hence, $I_n=R_n$, and Lemma~\ref{6.2} (4) gives
\[
 A=I_n:I_n=R_n:R_n=R_n=I_n.
\]
For every $\ell\geq n$, we also have $I_n\subseteq I_{\ell} \subseteq R_{\ell}\subseteq A=I_n$, and therefore $I_{\ell}=R_{\ell}=A$.

$(2)$ By applying the equality in Lemma~\ref{6.2} $(3)$ repeatedly, we obtain
\[
 I=(\sqrt{I}\sqrt{I_1}\cdots \sqrt{I_{n}}I_{n+1})^{v_R}
\]
for every $n\ge 0$.
Choose $n$ such that $I_{n+1}=R_{n+1}=A$.
Since $\sqrt{I_n}I_{n+1}=\sqrt{I_n}R_{n+1}=\sqrt{I_n}$, we have
\[
 I =\bigl(\sqrt I\sqrt{I_1}\cdots\sqrt{I_n}\bigr)^{v_R},
\]
as desired.
\end{proof}

For an integrally closed ideal $I\in \F^v(R)$, we set $\rmA_i(I)=\Ass_{R_{i}} (R_{i}/I_i)$ and define
$$\displaystyle P(I)=\prod_{\{i\ge 0\mid I_i\ne R_i\}}\left(\prod_{P\in \rmA_i(I)} P\right).$$
These are ordinary products in $K$, and only finitely many factors occur by Proposition~\ref{6.b}.
When $I=R$, we set $P(I)=R$.

\begin{prop}\label{6.3}
If $I\in \F^v(R)$ is integrally closed, then $I=(P(I))^{v_R}$.

\end{prop}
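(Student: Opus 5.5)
The plan is to combine Proposition~\ref{6.b}~(2) with Corollary~\ref{2.8}~(1), applied inside each ring $R_i$, and then collapse the nested $v$-closures using Proposition~\ref{5.10}. We may assume $I\ne R$. By Proposition~\ref{6.b}~(2) there is $n\ge0$ with
\[
I=\bigl(\sqrt{I}\sqrt{I_1}\cdots\sqrt{I_n}\bigr)^{v_R},
\]
and we may take $n$ minimal, so that $I_i\neq R_i$ for $0\le i\le n$ and $I_\ell=R_\ell$ for $\ell>n$. Indeed, once $I_i=R_i$ we get $\sqrt{I_i}=R_i$ and $I_{i+1}=R_i:R_i=R_i$, and this forces $R_{i+1}=R_i$. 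Hence the indices with $I_i\ne R_i$ form an initial segment $\{0,\dots,n\}$, and extra factors $\sqrt{I_\ell}=R_\ell\subseteq A$ would not change the $v$-closure anyway, since $\sqrt{I}A=\sqrt I\,\cdot$ stays inside the same closure.

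Next, each $R_i$ is a Noetherian $(S_2)$ domain (Proposition~\ref{2.10}) with $R_i=R_i^*$ (Lemma~\ref{6.2}~(1)). Also $R_i\in\B^v(R)$ and $I_i\in\F^v(R_i)$ by Lemma~\ref{6.2}~(2). So Corollary~\ref{2.8}~(1) and Proposition~\ref{2.4}~(2), applied over $R_i$, give
\[
\sqrt{I_i}=\Bigl(\prod_{P\in \rmA_i(I)}P\Bigr)^{v_{R_i}},
\]
since $\rmA_i(I)=\Ass_{R_i}(R_i/I_i)=V(I_i)\cap\H_1(R_i)$. By Proposition~\ref{5.10}, with $R_i\in\B^v(R)$, we have $v_{R_i}=v_R$ on $\bsF(R_i)$. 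Hence $\sqrt{I_i}=\bigl(\prod_{P\in\rmA_i(I)}P\bigr)^{v_R}$.

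Finally, substitute these into the displayed formula. We use the standard identity $(X^{v_R}Y^{v_R})^{v_R}=(XY)^{v_R}$ for $X,Y\in\bsF(R)$. Here each $\prod_{P\in\rmA_i(I)} P$ is a finitely generated $R$-submodule of $K$, because $R_i$ is finite over $R$, and it is nonzero, so it lies in $\bsF(R)$. Then
\[
I=\Bigl(\prod_{i=0}^n\Bigl(\prod_{P\in\rmA_i(I)}P\Bigr)^{v_R}\Bigr)^{v_R}=\Bigl(\prod_{i=0}^n\prod_{P\in \rmA_i(I)}P\Bigr)^{v_R}=(P(I))^{v_R}.
\]

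The only delicate point is matching the index set in the definition of $P(I)$ with the $n$ of Proposition~\ref{6.b}, namely checking that $\{i\mid I_i\ne R_i\}$ is exactly $\{0,\dots,n\}$. This is handled by the stabilization observation in the first paragraph. Everything else is a direct chain of earlier results.
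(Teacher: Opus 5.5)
Your proposal is correct and follows the paper's proof: take $n+1$ to be the least index with $I_{n+1}=R_{n+1}$, apply Corollary~\ref{2.8}~(1) over each $R_i$ together with Proposition~\ref{5.10} to get $\sqrt{I_i}=\bigl(\prod_{P\in\rmA_i(I)}P\bigr)^{v_R}$, and substitute into Proposition~\ref{6.b}~(2). Your sentence about the ``extra factors'' is garbled; the clean justification, used in the proof of Proposition~\ref{6.b}~(2), is that the formula holds for any $n$ with $I_{n+1}=R_{n+1}$, since $\sqrt{I_n}\,R_{n+1}=\sqrt{I_n}$ because $R_{n+1}=\sqrt{I_n}:\sqrt{I_n}$.
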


\begin{proof}
Assume that $I\ne R$, and let $n+1$ be the least index such that $I_{n+1}=R_{n+1}$.
For $0\le i\le n$, Corollary~\ref{2.8} and Proposition~\ref{5.10} give $\sqrt{I_i}=\left(\prod_{P\in \rmA_i(I)} P\right)^{v_{R_{i}}}=\left(\prod_{P\in \rmA_i(I)} P\right)^{v_R}$.
Hence, Proposition~\ref{6.b} $(2)$ yields
$$I=\left(\prod_{\{i\ge 0\mid I_i\ne R_i\}}\left(\prod_{P\in \rmA_i(I)} P\right)^{v_R}\right)^{v_R}=(P(I))^{v_R}.$$
\end{proof}

We then obtain the following theorem.

\begin{thm}\label{6.5}
Suppose that $R=R^*$ and consider the following conditions.
\benu[$(1)$]
\item
For every $P\in \Spec R$, $R_P$ is Boole $v$-regular.
\item
For all $X, Y\in \bsF^v(R)$, we have $XY\in \bsF^v(R)$.
\item
$\Inv^v(R)=\Inv(R)$.
\item
$\H_1^{{\rm reg}}(R)\subseteq \Inv(R)$.
\item
If $I\in \F^v(R)$ is integrally closed, then $I=P(I)$.

\enenu
Then $(1)\Leftrightarrow (2)\Leftrightarrow  (3)\Rightarrow (4), (5)$ holds.
In addition, if $\bar{R}$ is a finitely generated $R$-module, then $(5)\Rightarrow (4)\Rightarrow (3)$ also holds.

\end{thm}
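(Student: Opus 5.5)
The plan is to handle the equivalences $(1)\Leftrightarrow(3)$ and $(2)\Leftrightarrow(3)$ by localizing and using Theorem~\ref{5.9}, Lemma~\ref{5.a}, Lemma~\ref{5.11a}~(2) and Proposition~\ref{5.15}. The implications to $(4)$ and $(5)$, and back from them when $\bar R$ is finite over $R$, go through Proposition~\ref{5.8} and Proposition~\ref{6.3}. Throughout I use that strict closedness, $(S_2)$ and $v$-closures localize. I also use that for $X\in\bsF(R)$, divisoriality can be checked at maximal ideals, since $X^{v_R}$ commutes with localization.

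\emph{$(1)\Leftrightarrow(3)$.} Since $R_P=R_P^*$ for every $P$, Theorem~\ref{5.9} reduces $(1)$ to $\Cl^v(R_P)=0$ for all $P\in\Spec R$.
\begin{itemize}
\item $(1)\Rightarrow(3)$: if $X\in\Inv^v(R)$, then $X_P\in\Inv^v(R_P)$ is principal for each $P$. Hence $X(R:X)$ is locally the unit ideal, so $X\in\Inv(R)$.
\item $(3)\Rightarrow(1)$: given $L\in\Inv^v(R_P)$, Lemma~\ref{5.11a}~(2) gives $X\in\Inv^v(R)=\Inv(R)$ with $X_P=L$. Then $L$ is an invertible ideal of a local domain, hence principal, so $\Cl^v(R_P)=0$.
\end{itemize}

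\emph{$(2)\Leftrightarrow(3)$.}
\begin{itemize}
\item $(2)\Rightarrow(3)$: for $X\in\Inv^v(R)$ we have $R:X\in\bsF^v(R)$, so $X(R:X)$ is divisorial by $(2)$. It therefore equals $(X(R:X))^{v_R}=R$.
\item $(1)\Rightarrow(2)$: for $X,Y\in\bsF^v(R)$ and a maximal ideal $M$, the ideals $X_M,Y_M$ lie in $\bsF^v(R_M)$. Proposition~\ref{5.15} applied to the Boole $v$-regular ring $R_M$ (the field case being trivial) shows that $(XY)_M$ is $R_M$-divisorial. Hence $XY=(XY)^{v_R}$.
\end{itemize}

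\emph{$(3)\Rightarrow(4)$ and $(2)\Rightarrow(5)$.} The first implication is immediate, since every $P\in\H_1^{\rm reg}(R)$ is in $\Inv^v(R)$ (as in the proof of Proposition~\ref{5.8}). For the second, each $P\in\rmA_i(I)$ lies in $\H_1(R_i)$ by Proposition~\ref{2.4}. Hence $P\in\F^v(R_i)\subseteq\bsF^v(R)$ by Remark~\ref{overring-reflexive}, since $R_i\in\B^v(R)$ by Lemma~\ref{6.2}. Applying $(2)$ repeatedly, the finite ordinary product $P(I)$ is divisorial, and Proposition~\ref{6.3} gives $I=(P(I))^{v_R}=P(I)$.

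\emph{The converses when $\bar R$ is finite over $R$.}
\begin{itemize}
\item $(4)\Rightarrow(3)$: by Proposition~\ref{5.8}, every $X\in\Inv^v(R)$ satisfies $X\cong\bigl(\prod P_j^{\pm m_j}\bigr)^{v_R}$ with $P_j\in\H_1^{\rm reg}(R)$. Under $(4)$, each $P_j$ and each $R:P_j$ is invertible. A product of invertible ideals is invertible, hence divisorial, so the $v$-product coincides with the ordinary product. Thus $X$ is isomorphic to an invertible ideal.
\item $(5)\Rightarrow(4)$: I expect this to be the main obstacle. Fix $P\in\H_1^{\rm reg}(R)$. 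By Lemma~\ref{5.7}, choose an integrally closed $J\in\F^v(R)$ with $J\cong R:P$, and set $I'=(PJ)^{v_R}\subseteq P$. Then $I'\cong(P(R:P))^{v_R}=R$, so $I'=yR$ is principal. To see that $I'$ is integrally closed, check at height-one primes. At $P$ this holds because $R_P$ is a DVR. At every other $Q\in\H_1(R)$ we have $I'_Q=J_Q$, which is integrally closed. Proposition~\ref{2.2}~(1) then makes $I'$ integrally closed. Since $I'\subseteq P$ and $P$ is minimal over $I'$, we get $P\in\rmA_0(I')$. So $(5)$ gives $yR=I'=P(I')=P\cdot N$ for an ideal $N$ of $K$, namely the product of the remaining factors. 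A factor of an invertible product is invertible, since $P\cdot(y^{-1}N)=R$ forces $y^{-1}N\subseteq R:P$. Hence $P\in\Inv(R)$.
\end{itemize}
The points I would verify carefully are that $P$ genuinely occurs among the factors of $P(I')$, and that the remaining factors form a fractional ideal, i.e.\ an $R$-submodule of $K$.
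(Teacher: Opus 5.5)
Your proposal is correct. For $(1)\Leftrightarrow(2)\Leftrightarrow(3)$, $(3)\Rightarrow(4)$, $(2)\Rightarrow(5)$ and $(4)\Rightarrow(3)$ it follows the paper's proof. Your direct $(1)\Rightarrow(3)$ via local principality is only a shortcut of the paper's $(1)\Rightarrow(2)\Rightarrow(3)$, and your $(4)\Rightarrow(3)$ spells out what the paper leaves to Proposition~\ref{5.8}.

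The genuine difference is in $(5)\Rightarrow(4)$. Both arguments need a principal integrally closed ideal $yR\subseteq P$ with $P\in\Ass_R R/yR$; they differ in how it is produced.
\begin{itemize}
\item The paper does this directly. The set $\Sigma=\H_1(R)\setminus\H_1^{\rm reg}(R)$ is finite, and prime avoidance gives $a\in P\setminus\bigcup_{Q\in\Sigma}Q$. Then $aR$ is integrally closed, because it is the unit ideal at each $Q\in\Sigma$ and an ideal of a DVR at every other height-one prime.
\item You instead take an integrally closed representative $J$ of the class of $R:P$ from Lemma~\ref{5.7}, and use $P\in\Inv^v(R)$ to get $(PJ)^{v_R}=yR$. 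This works, and your local check of integral closedness is the same one used inside Lemma~\ref{5.7}.
\end{itemize}
Your route is heavier, though: Lemma~\ref{5.7} itself rests on the finiteness of $\Sigma$ and on Theorem~\ref{3.3} over a semi-local Arf localization. So it gains nothing over the one-line prime avoidance, while the final cancellation step is identical in both.

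The two points you flagged are fine. First, $P\in\rmA_0(I')$ by Proposition~\ref{2.4}~(2). Second, since $I':I'=R$, every $R_i$ equals $R$, so the remaining factors are primes of $R$. In any case, the cancellation $P(y^{-1}N)=R\Rightarrow y^{-1}N\subseteq R:P$ only needs $N$ to be an $R$-submodule of $K$.
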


\begin{proof}
$(3)\Rightarrow (1):$ For $P\in\Spec R$, the map $\Inv^v(R)\to\Inv^v(R_P)$ is surjective by Lemma~\ref{5.11a} (2).
Under condition (3), every ideal in the image is invertible and is therefore principal over the local ring $R_P$.
Hence, $\Cl^v(R_P)=0$.
Since $(R_P)^*=(R^*)_P=R_P$, Theorem~\ref{5.9} shows that $R_P$ is Boole $v$-regular.

$(1)\Rightarrow (2):$ Let $X, Y\in \bsF^v(R)$.
By Proposition~\ref{5.15}, we have $((XY)^{v_R})_P=((XY)_P)^{v_{R_P}}=(XY)_P$ for every $P\in \Spec R$.
Comparing localizations at all prime ideals, we obtain $(XY)^{v_R}=XY$.

$(2)\Rightarrow (3):$ Let $X\in \Inv^v(R)$.
Since $X, R:X\in \bsF^v(R)$, condition $(2)$ gives $X(R:X)=(X(R:X))^{v_R}=R$, and hence $X\in\Inv(R)$.

$(2)\Rightarrow (5):$ Each prime ideal $P$ occurring in $P(I)$ is a height-one prime ideal of the corresponding ring $R_i$ and is therefore $R_i$-divisorial.
Since $R_i\in\B^v(R)$, Remark~\ref{overring-reflexive} shows that $P$ is also $R$-divisorial.
By assumption (2), their finite product $P(I)$ is $R$-divisorial.
Proposition~\ref{6.3} now gives $I=(P(I))^{v_R}=P(I)$.

$(3)\Rightarrow (4):$ This follows from $\H_1^{\rm reg}(R)\subseteq \Inv^v(R)$.

For the remaining implications, we assume that $\bar{R}$ is a finitely generated $R$-module.

$(5)\Rightarrow (4):$ The set $\Sigma=\H_1(R)\setminus\H_1^{\rm reg}(R)$ is finite because $\bar R$ is a finitely generated $R$-module.
Fix $P\in\H_1^{\rm reg}(R)$. By prime avoidance, choose
\[
0\ne a\in P\setminus\bigcup_{Q\in\Sigma}Q
\]
and set $I=aR$.
For $Q\in\Sigma$, we have $I_Q=R_Q$; at every other height-one prime, $I_Q$ is an ideal of a DVR and is therefore integrally closed.
Hence, $I$ is integrally closed.
Since $I:I=R$, every ring $R_i$ associated with this $I$ equals $R$.
By assumption (5), $aR=I=P(I)$ is an ordinary product of prime ideals of $R$.
As $P\in\Ass_R R/aR$, the prime ideal $P$ occurs as a factor.
Writing $P(I)=PJ$, we obtain $P(a^{-1}J)=R$, so $P$ is invertible.

$(4)\Rightarrow (3):$ This follows from Proposition~\ref{5.8}.
\end{proof}

\begin{rem}
If every nonzero proper principal ideal of a domain $R$ is a finite product of prime ideals of $R$, then $R$ is called a {\it $\pi$-domain}.
When $R$ is integrally closed, $R$ is a Krull domain, and the equivalence of conditions $(1)$--$(4)$ agrees with the characterization of $\pi$-domains in \cite[Theorem 1]{A}.
Thus, the theorem extends this characterization of Krull $\pi$-domains to the present setting, which includes non-integrally closed rings.
\end{rem}

\begin{ex}\label{ex:decomp-surface}
Let $V=L[[s,t]]$ and $R=k[[s]]+tV$ be as in Example~\ref{ex:boole-surface}, and set $I=s^2t^3V\subseteq R$.
Then $R$ is Boole $v$-regular, and we have
\[
\begin{array}{c|c|c|c}
 i&R_{(I, i)}&I_i&\Ass_{R_{(I, i)}} R_{(I, i)}/I_i\\ \hline
 0&R&s^2t^3V&\{sR,tV\}\\
 1&V&st^2V&\{sV,tV\}\\
 2&V&tV&\{tV\}\\
 3&V&V&\emptyset
\end{array}
\]
Thus
\[
 P(I)=(sR)(tV)(sV)(tV)(tV)=s^2t^3V=I.
\]
\end{ex}

\begin{proof}
The ideal $sR$ is prime because $R/sR\cong k+tL[[t]]$ is a domain.
Similarly, $tV$ is prime since $R/tV=k[[s]]$, and both primes have height one.
As a principal ideal of $V$, the ideal $I$ is integrally closed; since $I\subseteq R$, it is also integrally closed over $R$.
Moreover, $V\in\B^v(R)$ implies that $I$ is $R$-divisorial.
We have
\[
 \sqrt I=sR\cap tV=stV,\qquad
 (stV):(stV)=V,\qquad I:(stV)=st^2V.
\]
By computing the colon ideals in the UFD $V$, we obtain the remaining entries of the table.
At each step, the associated prime ideals of the quotient are precisely the height-one prime ideals in the table. Their ordinary product gives $P(I)$ and hence the stated decomposition.
\end{proof}

\begin{ex}\label{ex:decomp-affine}
Let $V=k[t]$ and $R=k[t^2,t^3]$ be as in Example~\ref{ex:affine-cusp}, and set $I=t^4V\subseteq R$.
The ring $R$ is locally Boole $v$-regular but is not Boole $v$-regular, since $\Cl^v(R)\cong(k,+)\ne0$.
In this case,
\[
\begin{array}{c|c|c|c}
 i&R_{(I,i)}&I_i&\Ass_{R_{(I,i)}} R_{(I,i)}/I_i\\ \hline
 0&R&t^4V&\{t^2V\}\\
 1&V&t^2V&\{tV\}\\
 2&V&tV&\{tV\}\\
 3&V&V&\emptyset
\end{array}
\]
and
\[
 P(I)=(t^2V)(tV)(tV)=t^4V=I.
\]
\end{ex}

\begin{proof}
The ideal $I$ is integrally closed and divisorial, and we have 
$$\sqrt{I}=t^2V,\qquad (t^2V):(t^2V)=V,\qquad  t^4V:t^2V=t^2V.$$
The remaining entries of the table follow by the same calculation in the PID $V$.
\end{proof}

\section{Polynomial extensions and constructions by localization}\label{sec:polynomial}

In this section, we study Clifford $v$-regularity of the polynomial ring $R[X]$, the structure of $\calS^v({R[X]})$, and Boole $v$-regularity.

If $R$ is a Noetherian domain satisfying $(S_2)$, then $R[X]$ is also a Noetherian domain satisfying $(S_2)$ (\cite{SP2}).
Moreover, $\bar{R[X]}=\bar{R}[X]$, and \cite[Lemma 4.9]{C} gives $(R[X])^*=R^*[X]$.
Hence, we obtain the following theorem.

\begin{thm}\label{7.1}
The following conditions are equivalent.
\benu[$(1)$]
\item
$R[X]=(R[X])^*$.
\item
$R[X]$ is Clifford $v$-regular.
\item
$R=R^*$.
\item
$R$ is Clifford $v$-regular.

\enenu
\end{thm}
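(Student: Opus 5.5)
The plan is to combine Theorem~\ref{5.5}, applied twice, with the identity $(R[X])^*=R^*[X]$ recalled just before the statement. First I record that $R[X]$ is again a Noetherian domain satisfying $(S_2)$, by \cite{SP2}. It is not a field, so all results of Sections~\ref{sec2}--\ref{sec:semigroups} apply to $R[X]$ as well as to $R$. Then Theorem~\ref{5.5}, applied to $R[X]$, gives $(1)\Leftrightarrow(2)$. Applied to $R$, it gives $(3)\Leftrightarrow(4)$. It therefore only remains to prove $(1)\Leftrightarrow(3)$.

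For $(1)\Leftrightarrow(3)$, I use $\overline{R[X]}=\bar R[X]$ together with \cite[Lemma 4.9]{C}, which gives $(R[X])^*=R^*[X]$. If $R=R^*$, then $(R[X])^*=R[X]$ directly. Conversely, suppose $(R[X])^*=R[X]$. Then $R^*[X]=R[X]$ as subrings of $K[X]$. Comparing constant terms, or intersecting both sides with $K$, gives $R^*=R$.

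The only point needing care is the ambient setting of the identity $(R[X])^*=R^*[X]$. The strict closure of $R[X]$ is taken inside its integral closure $\bar R[X]$, which sits in $K(X)$. The coefficientwise comparison is legitimate because both rings lie in $\bar R[X]\subseteq K[X]$. With that checked, the four conditions are equivalent and no real obstacle remains, since the substance is already in Theorem~\ref{5.5} and the cited results.
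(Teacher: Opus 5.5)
Your proposal is correct and matches the paper's argument: $R[X]$ is again a Noetherian $(S_2)$ domain by \cite{SP2}, so Theorem~\ref{5.5} gives $(1)\Leftrightarrow(2)$ and $(3)\Leftrightarrow(4)$. The identity $(R[X])^*=R^*[X]$ from \cite[Lemma 4.9]{C}, with intersection with $K$, gives $(1)\Leftrightarrow(3)$.
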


For $Y\in \bsF(R)$, we set $Y[X]=YR[X]$.
Then $Y[X]\in \bsF(R[X])$, and
$$R[X]:Y[X]=(R:Y)R[X]=(R:Y)[X].$$
In particular, $(Y[X])^{v_{R[X]}}=(Y^{v_R})[X]$, so the map $\Psi : \bsF^v(R)\to \bsF^v(R[X]),\ Y \mapsto Y[X]$ is well defined.
We then have the following result on the structure of $\calS^v({R[X]})$.

\begin{thm}\label{7.2}
If $R=R^*$, then the map
$$\psi: \B^v(R)\to \B^v({R[X]}),\ A\mapsto A[X]$$
is bijective. Consequently, we have
\[
\calS^v(R[X])=\bigsqcup_{A\in\B^v(R)}\calS_{A[X]}^v(R[X]) \quad \text{and}
\quad \calS_{A[X]}^v(R[X])\cong\Cl^v(A[X]).
\]

\end{thm}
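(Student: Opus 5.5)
We first check that $\psi$ is well defined. Let $A\in\B^v(R)$. Then $A[X]$ is a finite birational extension of $R[X]$ and a subring of $K(X)$. The identity $(Y[X])^{v_{R[X]}}=(Y^{v_R})[X]$ shows that $A[X]$ is $R[X]$-divisorial. Injectivity is also immediate: if $A[X]=B[X]$, comparing degree-zero parts, or intersecting with $K$, gives $A=B$.

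The main point is surjectivity. Let $B\in\B^v(R[X])$. By Theorem~\ref{7.1}, $R[X]=(R[X])^*$. Since $B$ is integral over $R[X]$, we have $B\subseteq\bar R[X]$. The strategy is to reduce to a trace ideal and show that it is extended from $R$.

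Set $\fkc=R[X]:B$. This is a divisorial ideal of $R[X]$ with $R[X]:\fkc=B$. Since $\fkc B=\fkc$, it is a trace ideal, and by Theorem~\ref{3.3} it is integrally closed after localizing at each height-one prime. Its associated primes are height-one primes of $R[X]$ (Proposition~\ref{2.4}). I claim each associated prime $Q$ of $R[X]/\fkc$ satisfies $Q=(Q\cap R)[X]$ with $Q\cap R\in\H_1(R)$. Indeed, $R[X]_Q$ is not a DVR, because otherwise $\fkc_Q$ would be principal and $B_Q=\fkc_Q:\fkc_Q=R[X]_Q$. Moreover, if $Q\cap R=0$, then $R[X]_Q$ is a localization of $K[X]$, hence a DVR. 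So $\height(Q\cap R)=1$, and hence $Q=(Q\cap R)[X]$.

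For such $Q$, with $P=Q\cap R$, the ring $R[X]_Q=R_P[X]_{PR_P[X]}$ is obtained from the one-dimensional Arf ring $R_P$ by the faithfully flat extension $R_P\to R_P(X)$. The key step, and the main obstacle, is to show that the integrally closed trace ideal $\fkc R[X]_Q$ is extended from an ideal of $R_P$. I would use Lemma~\ref{6.a} and Theorem~\ref{3.3}: $\fkc_Q$ is determined by its extension to the normalization $\bar R_P(X)$. That extension is a product of powers of the maximal ideals $\mathfrak n_jR_P(X)$, since the maximal ideals of $\bar R_P(X)$ are exactly the extensions of those of $\bar R_P$. Hence $\fkc_Q=(\fkc_Q\bar R_P(X))\cap R_P(X)$ is extended from the integrally closed ideal $\fka_P=\bigl(\prod\mathfrak n_j^{b_j}\bigr)\cap R_P$.

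With these local ideals in hand, define the divisorial ideal $\fka=\bigcap_{P}(\fka_PR_P\cap R)$ of $R$, where $P$ runs over the finitely many height-one primes below the associated primes of $\fkc$. By Corollary~\ref{2.3}, $\fka[X]=\fkc$, since both are divisorial and agree at all height-one primes of $R[X]$. Here we use that a height-one prime $Q$ with $Q\cap R=0$, or with $Q$ not among the associated primes, has $\fkc_Q=R[X]_Q=\fka[X]_Q$. Then $A=R:\fka\in\B^v(R)$, and $B=R[X]:\fka[X]=(R:\fka)[X]=A[X]$. Here $R:\fka$ is a ring, because $\fka$ is locally a trace ideal and therefore $\fka:\fka=R:\fka$ by checking at height-one primes.

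The decomposition then follows from Theorem~\ref{5.5} applied to $R[X]$, which is strictly closed by Theorem~\ref{7.1}, by reindexing the union over $\B^v(R[X])=\psi(\B^v(R))$.
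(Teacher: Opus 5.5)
Your argument is correct, but it takes a different route from the paper.

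\textbf{The paper's route.} The paper never looks at the conductor. It sets $A=B\cap K$ and shows directly that $A$ is a finite, divisorial extension of $R$ with $A[X]\subseteq B$ (finiteness comes from a leading-coefficient argument). It then uses Proposition~\ref{5.10} for $R[X]$ to get $B\in\B^v(A[X])$. Finally it proves $A[X]_Q=B_Q$ for every $Q\in\H_1(A[X])$ by contradiction: if they differ, then $Q=P[X]$ with $P\in\H_1(A)$ and $(P:P)[X]=Q:Q\subseteq A[X]:(A[X]:B)=B$. This forces $P:P=A$, so $A_P$ is a DVR by Lemma~\ref{5.a}, and $B$ cannot be larger than $A[X]$ at $Q$.

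\textbf{Your route.} You descend the conductor $\fkc=R[X]:B$. Write $R_P(X)=R_P[X]_{PR_P[X]}$. The Arf property of $R[X]_Q=R_P(X)$ makes the trace ideal $\fkc_Q$ integrally closed. It is therefore the contraction of an ideal of the semilocal PID $\overline{R_P(X)}=\bar R_P(X)$, whose ideals are all extended from $\bar R_P$. Flatness of $R_P\to R_P(X)$ then shows that $\fkc_Q$ is extended from $R_P$. Patching gives $\fkc=\fka[X]$ and $B=(R:\fka)[X]$.

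\textbf{Comparison.} Your route yields extra information: the conductor $R[X]:B$ is itself extended, equal to $(R:A)[X]$. It costs more machinery than the paper's argument, namely the normalization of $R_P(X)$ and flat descent. The paper's contradiction is elementary and uses strict closedness mainly through Proposition~\ref{5.10}.

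\textbf{Points to make explicit when you write it up.}
\begin{enumerate}
\item The equality $(N\bar R_P(X))\cap R_P(X)=(N\cap R_P)R_P(X)$ for $N=\prod_j\mathfrak n_j^{b_j}$ holds because $\bar R_P(X)=\bar R_P\otimes_{R_P}R_P(X)$ and intersections commute with flat base change.
\item $\fka_P$ is divisorial, for example by faithfully flat descent from $\fkc_Q$. Hence each $\fka_P\cap R$, and so $\fka$, is divisorial.
\item The maximal ideals of $\bar R_P(X)$ should be written $\mathfrak n_j\bar R_P(X)$, not $\mathfrak n_jR_P(X)$.
\item The fact you actually need is $L=L\bar T\cap T$ for an integrally closed ideal $L$ of a one-dimensional domain $T$, as in the proof of Theorem~\ref{3.4}. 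Lemma~\ref{6.a} is not what is used here.
\item The trace argument showing that $R:\fka$ is a ring is unnecessary, since $R:\fka=(R:\fka)[X]\cap K=B\cap K$.
\end{enumerate}
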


\begin{proof}
Let $B\in \B^v(R[X])$ and set $A=B\cap K$.
Then $A$ is a ring and $A[X]\subseteq B$.
Since $R[X]\subseteq B\subseteq \bar{R[X]}=\bar{R}[X]\subseteq K[X]$, we have
$$R=R[X]\cap K\subseteq A \subseteq \bar{R}[X]\cap K=\bar{R}.$$
Moreover, $A$ is a finite birational extension of $R$.
Indeed, since $B$ is a finitely generated $R[X]$-module, we can choose $0\ne f\in R[X]$ such that $fB\subseteq R[X]$.
Let $0\ne c\in R$ be the leading coefficient of $f$.
For every $a\in A$, we have $af\in R[X]$, so $ca\in R$.
Thus, $cA\subseteq R$, and hence $A$ is a finitely generated $R$-module.

Furthermore, $(A^{v_R})[X]=(A[X])^{v_{R[X]}}\subseteq B^{v_{R[X]}}=B$ implies $A^{v_R}=(A^{v_R})[X]\cap K\subseteq B\cap K=A$, so $A\in \B^v(R)$ and $A[X]\in \B^v(R[X])$.
Theorem~\ref{3.12} also gives $A=A^*$.

Since $R[X]=(R[X])^*$ and $A[X]\in \B^v(R[X])$, Proposition~\ref{5.10} yields $B\in \B^v(A[X])$.
Thus, we have
$$A=A^*\in \B^v(R),\quad B\in \B^v(A[X]),\quad B\cap K=A.$$
We show that $A[X]=B$.
Suppose that $Q\in \H_1(A[X])$ satisfies $A[X]_Q\ne B_Q$. Then $A[X]:B\subseteq Q$.
Set $P=Q\cap A$.
If $P=(0)$, then $A[X]_Q=K[X]_{QK[X]}$ is integrally closed, so $A[X]_Q=B_Q$, which is a contradiction.
Hence, $(0)\ne P\in \H_1(A)$ and $Q=P[X]$.
It follows that
$$(P:P)[X]=Q:Q\subseteq A[X]:Q\subseteq A[X]:(A[X]:B)=B$$
and $A\subseteq P:P=(P:P)[X]\cap K\subseteq B\cap K=A$ gives $A=P:P$.
By Lemma~\ref{5.a}, the ideal $PA_P$ is principal.
Thus, $A_P$ is a DVR, and $A_P[X]$ is integrally closed.
Consequently,
\[
 A_P[X]=(A\setminus P)^{-1}A[X]=(A\setminus P)^{-1}B.
\]
Localizing further at $Q=P[X]$ gives $A[X]_Q=B_Q$, which is a contradiction.
Therefore, $A[X]_Q=B_Q$ for every $Q\in \H_1(A[X])$.
Since $B\in \B^v(A[X])$, we obtain $A[X]=B$, which proves surjectivity.
Injectivity follows from $A[X]\cap K=A$ for every $A\in\B^v(R)$.
\end{proof}

We next consider Boole $v$-regularity of $R[X]$.
If $I\in \Inv^v(R)$, then $I[X]\in \Inv^v(R[X])$, so $\iota : \Cl^v(R)\to \Cl^v(R[X]), \ [I]\mapsto [I[X]]$ is an injective group homomorphism.
Indeed, if $I[X]=fR[X]$ for some $f\in K(X)^\times$, extending to
$K[X]$ yields $K[X]=fK[X]$, so $f\in K^\times$.
Comparing constant coefficients, we obtain $I=fR$, which proves injectivity.

The following result is stated explicitly in \cite[Corollary 1.9]{EIK}, where \cite[Theorem 3.6]{Ga} is cited as the original source.
We prove only the necessity, which we use below.
For the sufficiency, we refer the reader to these papers.

\begin{prop}[{\cite[Corollary 1.9]{EIK}}]\label{7.3}
The map $\iota :\Cl^v(R)\to \Cl^v(R[X])$ is bijective if and only if $R$ is integrally closed.

\end{prop}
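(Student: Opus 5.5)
We prove only the necessity: assuming $\iota$ is bijective, we show that $R$ is integrally closed. We argue by contraposition. Suppose $R\ne\bar R$. Then we construct an element of $\Inv^v(R[X])$ whose class is not in the image of $\iota$. The natural candidates come from the affine cusp picture of Example~\ref{ex:affine-cusp}, where ideals of the form $k(1+at)+t^2V$ produce new classes. By Lemma~\ref{2.1} and $R\ne\bar R$, there is some $P\in\H_1(R)$ with $R_P$ not integrally closed. Fix $u\in\bar R\setminus R$ together with a nonzero $c\in R$ such that $cu\in R$, and set $\fkc=R:R[u]$; this is a nonzero ideal, since $R[u]$ is a finite birational extension.

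The candidate is the $R[X]$-submodule
\[
J=(1+uX)R[X]\cap R[X]\quad\text{or, more symmetrically,}\quad L=\bigl((1+uX)\bar R[X]\bigr)\cap K[X],
\]
adjusted to be divisorial. Concretely, I would take $L=\bigl((1+uX)R[X]+\fkc[X]\bigr)^{v_{R[X]}}$. I would check $v$-invertibility with Lemma~\ref{5.a}, i.e.\ that $L_Q\cong R[X]_Q$ for every $Q\in\H_1(R[X])$. There are two cases. If $Q\cap R=(0)$, then $R[X]_Q$ is a localization of $K[X]$, so it is a DVR and $L_Q$ is principal. If $Q=P'[X]$ with $P'\in\H_1(R)$, we localize at $P'$. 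When $u\in R_{P'}$, we get $L_Q=R[X]_Q$, because $1+uX$ is then a unit there modulo the conductor. Otherwise one must show that $L_Q$ is generated by $1+uX$. This uses that $1+uX$ is a unit in $\bar R[X]_Q$ (its content is $\bar R$), so $(1+uX)\bar R[X]_Q=\bar R[X]_Q$, as in the computation $L_aV=V$ in Example~\ref{ex:affine-cusp}.

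Once this is known, suppose $[L]=\iota([I])$, that is, $L=fI[X]$. Extending to $K[X]$ gives $f\in K^\times$, since $LK[X]=K[X]$. Hence $L$ would be extended from $R$, i.e.\ generated by constants. But $1+uX\in L$ while $u\notin R$, and comparing the coefficients of $X$ (the constant terms lie in $fI$ and the linear terms force $u\in R_{P'}$ after localizing) gives a contradiction. Thus $\iota$ is not surjective.

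I expect the main obstacle to be the local verification that $L_Q$ is principal, in fact equal to $(1+uX)R[X]_Q$, at $Q=P'[X]$ when $R_{P'}$ is not normal. It may be cleaner to replace $L$ by the divisorial ideal $\bigcap_{Q}\alpha_QR[X]_Q$ given by the patching Claim~\ref{clm:patching}. Here one takes $\alpha_Q=1+uX$ for the finitely many $Q=P'[X]$ with $P'\supseteq\fkc$, and $\alpha_Q=1$ otherwise. Finiteness holds because $V(\fkc)\cap\H_1(R)$ is finite. One must also check that at height-one primes $Q$ of $R[X]$ not of this form, $1+uX$ is a unit, or at least that the choice $\alpha_Q=1$ is consistent. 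With this choice, Claim~\ref{clm:patching} directly yields an element of $\Inv^v(R[X])$, and only the non-extendedness argument above remains.
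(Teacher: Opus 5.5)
Your route differs from the paper's: you work at the vertical primes $P'[X]$ and build an analogue of the cusp classes $L_a$ of Example~\ref{ex:affine-cusp} with ``$a=X$''. The construction half is fine. Claim~\ref{clm:patching} applies verbatim to the Noetherian $(S_2)$ domain $R[X]$ for any choice of the $\alpha_Q$, so no consistency check is needed. With your choice ($\alpha_{P'[X]}=1+uX$ for $P'\supseteq\fkc$, equivalently $u\notin R_{P'}$), it gives an element of $\Inv^v(R[X])$. Your first $L$ has the same localizations, provided you replace $\bar R$ by $R[u]$. By incomparability, the maximal ideals of $R[u][X]_{P'[X]}$ are the $\mathfrak n[X]$ with $\mathfrak n\cap R=P'$, so $1+uX$ is a unit there. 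Since $\fkc$ is an $R[u]$-module, and not an $\bar R$-module, this yields $\fkc R[X]_{P'[X]}\subseteq(1+uX)R[X]_{P'[X]}$.

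The genuine gap is the non-extendedness step, which carries the whole content of the necessity. From $L=J'[X]$ and $1+uX\in L$ you only get $1,u\in J'$, which is not contradictory. Your sketch also never uses that $u$ is integral, and it must. Take $R$ a DVR with uniformizer $\pi$, $u=\pi^{-1}$, $\alpha_{\fkm[X]}=1+uX$, and $\alpha_Q=1$ otherwise. Then $\pi+X$ is a unit of $R[X]_{\fkm[X]}$, so the patched ideal is $\pi^{-1}R[X]$. It has exactly your local form, contains $1+uX$ with $u\notin R$, and is nevertheless extended.

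A correct finish runs as follows. We have $1\in J'\subseteq L$, so with $T=R_{P'}$ (where $u\notin T$) the exact local form gives $1\in(1+uX)R[X]_{P'[X]}$. That is, $g=(1+uX)f$ with $f,g\in T[X]$ and some coefficient of $g$ a unit of $T$. Comparing coefficients, $g_i=f_i+uf_{i-1}$ gives $uf_j\in T$ for all $j$. If some $f_j$ is a unit, then $u\in T$, a contradiction. Otherwise all $f_j$ lie in $\fkm_T$. If some $uf_j$ were a unit, then $u^{-1}\in\fkm_T$, which is impossible for $u$ integral over $T$ (divide an integral equation by $u^n$). Hence every coefficient of $g$ lies in $\fkm_T$, a contradiction.

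The paper avoids vertical primes altogether. It uses the horizontal prime $Q_\alpha=\Ker(R[X]\to R[\alpha])$, which is $v$-invertible for free because $R[X]_{Q_\alpha}$ is a DVR. Integrality enters through a monic polynomial in $Q_\alpha$, which forces $1\in J$ and then $\alpha\in R$.
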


\begin{proof}
(Necessity) Set $S=R[X]$ and suppose that $\iota$ is surjective.
For any $\alpha\in\bar R$, set
\[
Q_\alpha=(X-\alpha)K[X]\cap S
=\Ker(S\longrightarrow R[\alpha],\ X\longmapsto\alpha).
\]
Then $Q_\alpha\cap R=(0)$, and $S_{Q_\alpha}=K[X]_{(X-\alpha)}$ is a DVR.
Hence, Proposition~\ref{2.4} and Lemma~\ref{5.a} give $Q_\alpha\in\Inv^v(S)$.
By surjectivity, there exist $I\in\Inv^v(R)$ and $f\in K(X)^\times$ such that $Q_\alpha=fI[X]$.
Extending both sides to $K[X]$, we obtain
\[
(X-\alpha)K[X]=fK[X].
\]
Thus, we may write $f=c(X-\alpha)$ with $c\in K^\times$.
Setting $J=cI$, we have $Q_\alpha=(X-\alpha)J[X]$.
For every $a\in J$, we have $a(X-\alpha)\in S$, so by comparing coefficients we obtain $a\in R$.
Thus, $J\subseteq R$.

Since $\alpha$ is integral over $R$, the ideal $Q_\alpha$ contains a monic polynomial $F$.
Write $F=(X-\alpha)G$ with $G\in J[X]$.
The leading coefficient of $G$ is 1, so $1\in J$.
Hence, $J=R$, and $X-\alpha\in Q_\alpha\subseteq S$ implies $\alpha\in R$.
Therefore, $R$ is integrally closed. This proves the necessity.
\end{proof}

We then obtain the following.

\begin{thm}\label{7.4}
The following conditions are equivalent.
\benu[$(1)$]
\item
$R=R^*$ and $\Cl^v(R[X])=0$.
\item
$R[X]$ is Boole $v$-regular.
\item
$R$ is a {\rm UFD}.

\enenu
\end{thm}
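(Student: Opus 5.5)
The plan is to run everything through Theorem~\ref{5.9} applied to $R[X]$, together with Theorem~\ref{7.1} and Proposition~\ref{7.3}. Recall that $R[X]$ is again a Noetherian domain satisfying $(S_2)$ (\cite{SP2}). Moreover $(R[X])^*=R^*[X]$, so $R[X]=(R[X])^*$ if and only if $R=R^*$.

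\emph{$(1)\Leftrightarrow(2)$.} By Theorem~\ref{5.9} applied to $R[X]$, the ring $R[X]$ is Boole $v$-regular if and only if $R[X]=(R[X])^*$ and $\Cl^v(R[X])=0$. By the remark above, this is the same as $R=R^*$ and $\Cl^v(R[X])=0$.

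\emph{$(3)\Rightarrow(1)$.} Assume $R$ is a UFD. Then $R$ is integrally closed, so $R=\bar R=R^*$. Since $R$ is a Noetherian integrally closed domain, $\Cl^v(R)$ is its divisor class group, and this group vanishes because $R$ is a UFD. Proposition~\ref{7.3} then shows that $\iota\colon\Cl^v(R)\to\Cl^v(R[X])$ is bijective, so $\Cl^v(R[X])=0$. Alternatively, one can simply note that $R[X]$ is again a UFD.

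\emph{$(1)\Rightarrow(3)$.} This is the main step. Assume $R=R^*$ and $\Cl^v(R[X])=0$.
\begin{itemize}
\item Since $\Cl^v(R[X])$ is trivial, the map $\iota\colon\Cl^v(R)\to\Cl^v(R[X])$ is trivially surjective.
\item The necessity half of Proposition~\ref{7.3}, which is proved in the paper, then shows that $R$ is integrally closed.
\item Because $\iota$ is injective, we also get $\Cl^v(R)=0$.
\item A Noetherian integrally closed domain with trivial $v$-class group is a UFD (\cite[Proposition 2.2 (2)]{KM}, as already used in Corollary~\ref{5.12}).
\end{itemize}

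The only point requiring care is that the argument in Proposition~\ref{7.3} uses only surjectivity of $\iota$. That argument works as follows. For $\alpha\in\bar R$, the prime $Q_\alpha$ is $v$-invertible. Triviality of $\Cl^v(R[X])$ makes $Q_\alpha$ principal, say $Q_\alpha=fR[X]$. Extending to $K[X]$ forces $f=c(X-\alpha)$ with $c\in K^\times$. The monic polynomial contained in $Q_\alpha$ then gives $1\in cR$, so $c$ may be taken to be $1$, and $X-\alpha\in R[X]$ gives $\alpha\in R$.

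Notice that the hypothesis $R=R^*$ is not even needed for $(1)\Rightarrow(3)$ once $\Cl^v(R[X])=0$ holds. It is needed only for the equivalence with $(2)$.
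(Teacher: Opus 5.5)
Your proposal is correct and takes essentially the same route as the paper. You get $(1)\Leftrightarrow(2)$ from Theorem~\ref{5.9} applied to $R[X]$ together with $(R[X])^*=R^*[X]$, and $(1)\Rightarrow(3)$ from the injectivity of $\iota$, the necessity half of Proposition~\ref{7.3}, and the fact that a Noetherian integrally closed domain with trivial $v$-class group is a UFD. The remaining differences are cosmetic: your ``alternative'' observation that $R[X]$ is a UFD is exactly the paper's $(3)\Rightarrow(2)$ argument, and it is preferable to the sufficiency half of Proposition~\ref{7.3}, which the paper only cites; your remark that $R=R^*$ is not used in $(1)\Rightarrow(3)$ is accurate.
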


\begin{proof}
$(1)\Leftrightarrow (2):$ This follows from Theorems~\ref{7.1} and \ref{5.9}.

$(3)\Rightarrow (2):$ The ring $R[X]$ is also a {\rm UFD}, so every divisorial ideal is principal.
Hence, $R[X]$ is Boole $v$-regular.

$(1)\Rightarrow (3):$ Since $\iota$ is injective, we have $\Cl^v(R)=0$.
Thus, $\iota$ is an isomorphism between trivial groups, and Proposition~\ref{7.3} shows that $R$ is integrally closed.
Every height-one prime ideal of a Noetherian integrally closed domain is $v$-invertible, so the vanishing of the class group implies that all such ideals are principal.
Hence, $R$ is a UFD.
\end{proof}


We next consider local Boole $v$-regularity of $R[X]$.
We begin with the following lemma.

\begin{lem}[{cf.\ \cite[Theorem 2]{NN}}]\label{lem:monic-invertible}
Let $D$ be a Noetherian domain, let $F=Q(D)$ be the field of fractions of $D$, and let $\alpha\in \bar{D}$.
If
$$Q=\Ker(D[X]\to D[\alpha],\ X\mapsto\alpha)$$
is invertible over $D[X]$, then $\alpha\in D$.
\end{lem}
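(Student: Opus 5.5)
The plan is to first reduce to the case where $D$ is local, and then to exploit the fact that $Q$ becomes principal after a suitable localization. Since $D=\bigcap_{\fkm\in\Max D}D_\fkm$, it suffices to show that $\alpha\in D_\fkm$ for every maximal ideal $\fkm$ of $D$. Localization at $D\setminus\fkm$ is flat and commutes with forming the kernel, so $Q_\fkm=\Ker(D_\fkm[X]\to D_\fkm[\alpha])$. Moreover, $Q_\fkm$ is still invertible over $D_\fkm[X]$, and $\alpha$ is still integral over $D_\fkm$. Hence we may assume that $(D,\fkm)$ is local.

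The next step records the basic structure of $Q$. Extending to $F[X]$ gives $QF[X]=(X-\alpha)F[X]$, since $\alpha$ is algebraic over $F$ and $Q$ consists of the polynomials in $D[X]$ that vanish at $\alpha$. Because $\alpha$ is integral, $Q$ contains a monic polynomial $f$. From $Q\subseteq(X-\alpha)F[X]$ we obtain
\[
 D[X]:Q\subseteq F[X]:(X-\alpha)F[X]=(X-\alpha)^{-1}F[X].
\]
Invertibility gives elements $g_j\in D[X]:Q$ and $q_j\in Q$ with $\sum_j g_jq_j=1$. Writing $g_j=k_j/(X-\alpha)$ and $q_j=(X-\alpha)p_j$ with $k_j,p_j\in F[X]$, this becomes $\sum_j k_jp_j=1$. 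The goal is to convert this into a statement about a single generator. Since $D[X]/Q\cong D[\alpha]$ is a finite $D$-module, hence semi-local, the invertible ideal $Q$ is principal after localizing at the finitely many maximal ideals of $D[X]$ containing $\fkm D[X]+Q$, as in the proof of Lemma~\ref{5.3}. So I would aim to show that $Q=hD[X]$ after inverting a multiplicative set $U\subseteq D[X]$ consisting of elements not in any of these maximal ideals.

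Given such a local generator $h$, the relation $hF[X]=(X-\alpha)F[X]$ forces $h=c(X-\alpha)u$ with $c\in F^\times$ and $u\in F[X]$ coprime to $X-\alpha$. Now $f=hr$ for some $r\in D[X]_U$. I would then compare leading coefficients and constant terms, as in the proof of Proposition~\ref{7.3}, to conclude that $c$ is a unit of $D$ up to elements of $U$, and hence that $c(X-\alpha)$ has coefficients in $D$ after localization. This would give $\alpha\in D[X]_U\cap F=D$.

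I expect the main obstacle to be precisely this coefficient comparison in the localized ring $D[X]_U$, which is no longer a polynomial ring. If it fails, my first fallback is a content argument modeled on Proposition~\ref{7.3}. Write $f=(X-\alpha)G$ with $G$ monic in $D[\alpha][X]$, and consider the ideal $J=(X-\alpha)^{-1}Q\subseteq F[X]$. Its inverse in $F(X)$ is $(X-\alpha)(D[X]:Q)\subseteq F[X]$, and the product of $J$ with this inverse is $D[X]$. Dedekind–Mertens then relates the contents of these ideals to show that the content of $J$ is invertible over $D$, hence principal because $D$ is local. Normalizing that principal content to $D$ and using the monic factor $G$, exactly as in the last step of the proof of Proposition~\ref{7.3}, would give $1\in J$. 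Then $X-\alpha\in Q\subseteq D[X]$, and therefore $\alpha\in D$.
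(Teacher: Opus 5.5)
\textbf{Verdict: genuine gap.} Your reduction to the local case is fine. However, neither of your two routes gets from the invertibility of $Q$ to $\alpha\in D$.

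\emph{First route.} A generator $h\in Q$ of $QD[X]_U$ only satisfies $h=(X-\alpha)u$ with $u\in F[X]$, and $u$ may be nonconstant. So $hF[X]=(X-\alpha)F[X]$ is not available. Since $D[X]_U$ is not a polynomial ring, there is also no leading-coefficient or constant-term comparison to run. As you say yourself, this step is never carried out.

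\emph{Fallback.} The step ``Dedekind--Mertens shows that $c(J)$ is invertible'' does not follow. For $p\in J$ and $q\in J'$ you have $pq\in D[X]$, but Dedekind--Mertens (with $m=\deg q$) gives only $c(p)^{m+1}c(q)=c(p)^m c(pq)\subseteq c(p)^m$. This places $c(J)c(J')$ inside $\overline{D}$, not inside $D$.

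In fact $JJ'=D[X]$ does not suffice in general. Take $D=k[t^2,t^3]_{(t^2,t^3)}$ with integral closure $B=D+Dt$, and set $J=(1+tX)D[X]+t^2B[X]$ and $J'=(1-tX)D[X]+t^2B[X]$. Using $1=(1+tX)(1-tX)+t^2X^2$ and $t^2B\subseteq D$, one checks $JJ'=D[X]$. Yet $c(J)=c(J')=B$, which is not invertible over $D$. This is the familiar phenomenon that invertible ideals of $D[X]$ need not be extended from $D$ when $D$ is not seminormal, and it is exactly what the lemma must rule out.

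Moreover, even if $c(J)$ were principal, the last step of Proposition~\ref{7.3} does not transfer. There, $Q_\alpha=(X-\alpha)J[X]$ was known to be \emph{extended} from an ideal of $R$, which came from the surjectivity of $\iota$. Your $J$ is only a $D[X]$-submodule of $F[X]$. Knowing $1\in c(J)$, which you already have from $G\in J$, does not give $1\in J$.

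\emph{The missing idea.} You need a \emph{monic} global generator of $Q$, so that Nakayama's lemma over the local ring $D$ can replace localization inside $D[X]$, which destroys the polynomial structure. The paper obtains this as follows.
\begin{enumerate}[(a)]
\item Reduce modulo $\mathfrak m$. The image $\overline{Q}\subseteq k[X]$ is a nonzero proper ideal $hk[X]$, with $h$ monic of degree $n\ge1$.
\item Since $D[\alpha]/\mathfrak m D[\alpha]\cong k[X]/(h)$, Nakayama shows that $1,\alpha,\ldots,\alpha^{n-1}$ generate $D[\alpha]$. This gives a monic $f\in Q$ of degree $n$ with $\overline{f}=h$.
\item Set $H=f(D[X]:Q)$. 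Then $fD[X]\subseteq H\subseteq D[X]$ and $QH=fD[X]$. Hence $\overline{f}\,\overline{H}=\overline{f}\,k[X]$, so $\overline{H}=k[X]$.
\item Because $f$ is monic, $D[X]/H$ is a quotient of the finite $D$-module $D[X]/fD[X]$. Nakayama gives $H=D[X]$, that is, $Q=fD[X]$.
\item Comparing monic generators of $QF[X]=(X-\alpha)F[X]$ yields $f=X-\alpha$, so $\alpha\in D$.
\end{enumerate}
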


\begin{proof}
By localizing at maximal ideals, we may assume that $(D,\mathfrak m,k)$ is local.
Set $S=D[X]$ and $E=D[\alpha]$.
Since $\alpha$ is integral over $D$, the ideal $Q$ contains a monic polynomial, so its image $\overline Q$ in $k[X]$ is nonzero.
Also, $E/\mathfrak mE\ne0$ by Nakayama's lemma, since $E\ne0$ is a finitely generated $D$-module.
Thus, $\overline Q$ is proper, and we may write $\overline Q=h k[X]$ with $h$ monic and $n=\deg h\ge1$.
Since $E$ is a finitely generated $D$-module and $E/\m E\cong k[X]/(h)$, Nakayama's lemma shows that $1,\alpha,\ldots,\alpha^{n-1}$ generate $E$.
Thus, we can choose a monic polynomial $f\in S$ of degree $n$ satisfying $f(\alpha)=0$.
Then $f\in Q$ and $\overline f=h$.

Set $H=f(S:Q)$.
Since $Q$ is invertible, we have
\[
 fS\subseteq H\subseteq S,\qquad QH=fS.
\]
By taking images in $k[X]$ and using $\overline Q=\overline f k[X]$ and $\overline f\ne0$, we obtain $\overline H=k[X]$, and hence $H+\mathfrak mS=S$.
On the other hand, since $f$ is monic, $S/H$ is a finitely generated $D$-module.
Nakayama's lemma therefore gives $H=S$, so $Q=fS$.
Extending to $F[X]$, we obtain $fF[X]=(X-\alpha)F[X]$.
Since both generators are monic, $f=X-\alpha$, and hence $\alpha\in D$, as desired.
\end{proof}

\begin{thm}\label{7.5}
The following conditions are equivalent.
\benu[$(1)$]
\item
For every $Q\in \Spec R[X]$, $R[X]_Q$ is Boole $v$-regular.
\item
For every $Q\in \Spec R[X]$, $R[X]_Q$ is a {\rm UFD}.
\item
For every $P\in \Spec R$, $R_P$ is a {\rm UFD}.
\enenu

\end{thm}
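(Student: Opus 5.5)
We prove $(2)\Rightarrow(1)$, $(3)\Rightarrow(2)$ and $(1)\Rightarrow(3)$.

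\emph{$(2)\Rightarrow(1)$.} In a UFD every divisorial fractional ideal is principal, so the $v$-class semigroup is trivial. In particular it is Boolean.

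\emph{$(3)\Rightarrow(2)$.} Let $Q\in\Spec R[X]$ and set $P=Q\cap R$. Then $R[X]_Q$ is a localization of $R_P[X]$. Since $R_P$ is a UFD, so is $R_P[X]$ by Gauss's lemma, and localizations of UFDs are UFDs. Hence $R[X]_Q$ is a UFD.

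\emph{$(1)\Rightarrow(3)$.} This is the substantive direction. Assume $(1)$ and fix $P\in\Spec R$.

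First we show $R=R^*$. Boole $v$-regularity of each $R[X]_Q$ gives strict closedness of $R[X]_Q$ by Theorem~\ref{5.9}. Strict closure commutes with localization (Section~\ref{sec:arf}), and $R[X]$ satisfies $(S_2)$. Theorem~\ref{3.9} then gives $R[X]=(R[X])^*$, and Theorem~\ref{7.1} gives $R=R^*$.

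Next, by Theorem~\ref{6.5} $(1)\Rightarrow(3)$ applied to $R[X]$ (note that $R[X]=(R[X])^*$), we get $\Inv^v(R[X])=\Inv(R[X])$. Localizing, we may replace $R$ by $R_P$. The hypotheses are preserved: $R_P[X]$ is a localization of $R[X]$, and its localizations at primes are localizations of $R[X]$ at primes. So we may assume $R$ is local with maximal ideal $\fkm$ and must show that $R$ is a UFD.

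\emph{$R$ is integrally closed.} Take $\alpha\in\bar R$ and consider $Q_\alpha=\Ker(R[X]\to R[\alpha])$. As in the proof of Proposition~\ref{7.3}, $Q_\alpha$ is a height-one prime with $R[X]_{Q_\alpha}$ a DVR. By Lemma~\ref{5.a}, $Q_\alpha\in\Inv^v(R[X])=\Inv(R[X])$. Lemma~\ref{lem:monic-invertible} then gives $\alpha\in R$, so $R$ is integrally closed.

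\emph{$R$ is a UFD.} Let $\fkp\in\H_1(R)$. Since $R$ is a Noetherian normal domain, $\fkp$ is $v$-invertible, so $\fkp[X]\in\Inv^v(R[X])=\Inv(R[X])$. Thus $\fkp[X](R[X]:\fkp[X])=R[X]$. Using $R[X]:\fkp[X]=(R:\fkp)[X]$, this becomes $(\fkp(R:\fkp))[X]=R[X]$. Comparing constant terms gives $\fkp(R:\fkp)=R$, so $\fkp$ is invertible, hence principal since $R$ is local. Therefore every height-one prime of the Krull domain $R$ is principal, and $R$ is a UFD.

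\emph{Main obstacle.} The delicate step is the reduction from the global statement $\Inv^v=\Inv$ on $R[X]$ to the local ring $R_P$. It relies on Theorem~\ref{6.5} for $R_P[X]$, which requires the hypotheses $(S_2)$ and strict closedness for $R_P[X]$; both are inherited as shown above. Once this is in place, the integral-closure step uses Lemma~\ref{lem:monic-invertible} exactly as stated.
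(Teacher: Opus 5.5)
Your proof is correct and follows the same route as the paper: you obtain $\Inv^v(R[X])=\Inv(R[X])$, apply Lemma~\ref{lem:monic-invertible} to $Q_\alpha$ to get integral closedness, and then compare constant terms in $\fkp[X](R[X]:\fkp[X])=R[X]$ to show that height-one primes are invertible. The differences are minor: you reach $\Inv^v=\Inv$ by first proving strict closedness and invoking Theorem~\ref{6.5}, whereas the paper gets it directly from $\Cl^v(R[X]_Q)=0$ for every $Q$ (so your strict-closedness step is not needed), and you localize at $P$ before the argument rather than after it.
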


\begin{proof}
$(3)\Rightarrow (2):$ Let $Q\in \Spec R[X]$ and set $P=Q\cap R$.
Then $R[X]_Q=(R_P[X])_{QR_P[X]}$.
Since $R_P$ is a UFD, so is $R[X]_Q$.

$(2)\Rightarrow (1):$ Every UFD is Boole $v$-regular.

$(1)\Rightarrow(3):$
Set $S=R[X]$.
By assumption, every $J\in\Inv^v(S)$ becomes principal in each localization $S_Q$.
Hence, $J\in \Inv(S)$, that is, $\Inv^v(S)=\Inv(S)$.
For any $\alpha\in\bar R$, set
\[
 Q_\alpha=(X-\alpha)K[X]\cap S
 =\Ker(S\to R[\alpha]).
\]
Then $S_{Q_\alpha}=K[X]_{(X-\alpha)}$ is a DVR, and $(Q_\alpha)_Q=S_Q$ at every other height-one prime.
Thus, Lemma~\ref{5.a} gives $Q_\alpha\in\Inv^v(S)=\Inv(S)$.
Lemma~\ref{lem:monic-invertible} now implies $\alpha\in R$, so $R$ is integrally closed.

Next, take $P\in\H_1(R)$.
Since $R$ is integrally closed, we have $P\in\Inv^v(R)$, and hence $P[X]\in\Inv^v(S)=\Inv(S)$.
Therefore
\[
 S=P[X](S:P[X])=\bigl(P(R:P)\bigr)[X]
\]
and comparison of constant terms gives $P(R:P)=R$.
Thus, every height-one prime ideal of $R$ is invertible.
Consequently, for every $P\in\Spec R$, all height-one prime ideals of $R_P$ are principal, so $R_P$ is a UFD. This proves the remaining implication.
\end{proof}

By Theorems~\ref{7.4} and \ref{7.5}, polynomial extension alone does not produce non-integrally closed Boole $v$-regular domains or non-integrally closed locally Boole $v$-regular domains.
In contrast, by taking suitable localizations of polynomial extensions, we can construct examples of both types in every positive dimension.

Let $d\ge1$ be an integer and let $k$ be a field. We set
\[
 D=k[x_1,\ldots,x_{d-1}],\qquad A=D[t^2,t^3],\qquad B=D[t].
\]
When $d=1$, we interpret $D=k$.
The isomorphism
\[
 A\cong k[x_1,\ldots,x_{d-1},u,v]/(v^2-u^3)
\]
given by $u\mapsto t^2$ and $v\mapsto t^3$ shows that $A$ is a $d$-dimensional Cohen-Macaulay domain.
Moreover, $B=A+At$ is the integral closure of $A$ and is finite as an $A$-module.
Since $A=D\oplus t^2D[t]$, the conductor is
\[
 \mathfrak p=A:B=t^2B=(t^2,t^3)A.
\]
Thus, for any multiplicatively closed set $W\subseteq A\setminus\{0\}$, the ring $R=W^{-1}A$ is a Cohen-Macaulay domain, $T=W^{-1}B$ is its integral closure, and $\fkp R=R:T=t^2T$.
In particular, $T\in\B^v(R)$.
If $W\cap\fkp\ne\emptyset$, then $R=T$.
If $W\cap\fkp=\emptyset$, then $\fkp R\in\H_1(R)$, and for $R\subsetneq H\in\B^v(R)$ we have $\fkp R\subseteq R:H\subsetneq R$.
As in Example~\ref{ex:pinch}, the divisoriality of $R:H$ gives $R:H=\fkp R$, and hence $H=T$.
Therefore, in either case,
\[
 \B^v(R)=\{R,T\},
\]
where the right-hand side is a singleton when $R=T$.
On the other hand, since $A=k[t^2,t^3][x_1,\ldots,x_{d-1}]$, Example~\ref{ex:affine-cusp} and Theorem~\ref{7.1} give $A=A^*$.
By the compatibility with localization discussed after Definition~\ref{3.7}, we obtain $R=R^*$.

\Needspace{12\baselineskip}
\begin{ex}\label{ex:boole-anydim}
With the above notation, let
\[
 \mathfrak m=(t^2-1,t^3-1,x_1,\ldots,x_{d-1})A,
 \qquad W_1=A\setminus(\mathfrak p\cup\mathfrak m)
\]
and set $R_1=W_1^{-1}A$ and $T_1=W_1^{-1}B$.
Then $R_1$ is a non-integrally closed $d$-dimensional Cohen-Macaulay semi-local domain that is Boole $v$-regular, and
\[
 \B^v(R_1)=\{R_1,T_1\},\qquad \Cl^v(R_1)=0,\qquad
 \calS^v(R_1)=\{[R_1],[T_1]\}.
\]
\end{ex}
\begin{proof}
The prime ideal $\mathfrak p$ has height one, and $\mathfrak m$ is a maximal ideal of height $d$; neither contains the other.
Hence, we have $\Max R_1=\{\mathfrak pR_1,\mathfrak mR_1\}$.
Since $\dim(R_1)_{\mathfrak mR_1}=\dim A_{\mathfrak m}=d$, we have $\dim R_1=d$.
Moreover, $R_1:T_1=\fkp R_1\nsubseteq\mathfrak mR_1$ implies that
\[
 (R_1)_{\mathfrak mR_1}=(T_1)_{\mathfrak mR_1}
 =D[t]_{(t-1,x_1,\ldots,x_{d-1})}
\]
is a regular local UFD.
On the other hand, the above construction gives $R_1=R_1^*$.
Since $\dim(R_1)_{\fkp R_1}=1$, Theorems~\ref{3.9} and \ref{3.4} show that $(R_1)_{\fkp R_1}$ is Arf and Boole $v$-regular.
Furthermore, setting $F=Q(D)$, we have
\[
 (R_1)_{\fkp R_1}=A_\fkp=F[t^2,t^3]_{(t^2,t^3)}
\]
and hence $t\notin(R_1)_{\fkp R_1}$.
Thus, $t\notin R_1$, so $R_1\subsetneq T_1$.

Take $I\in\Inv^v(R_1)$.
Both localizations at maximal ideals are Boole $v$-regular, so their $v$-class groups are trivial.
Hence, $I$ becomes principal at each maximal ideal.
Therefore, $I$ is invertible over $R_1$.
Since $R_1$ is semi-local, every invertible fractional ideal is principal, and we obtain $I\cong R_1$.
Thus, $\Cl^v(R_1)=0$, and Theorem~\ref{5.9} shows that $R_1$ is Boole $v$-regular.
The description of the class semigroup follows from $\B^v(R_1)=\{R_1,T_1\}$, obtained in the above construction, and Theorem~\ref{5.5}.
\end{proof}

\begin{ex}\label{ex:locally-boole-anydim}
With the above notation, suppose that $\operatorname{char}k\ne2$.
From each associate class of nonunit irreducible elements of $D=k[x_1,\ldots,x_{d-1}]$, choose a representative $f$ satisfying $f(0,\ldots,0)\in\{0,1\}$, and denote the set of these representatives by $\mathcal P$.
Let $W_2$ be the multiplicatively closed set generated by $g_f=t^2+f$ $(f\in\mathcal P)$, and set
\[
 R_2=W_2^{-1}A,\qquad T_2=W_2^{-1}B.
\]
When $d=1$, we have $D=k$, $\mathcal P=\emptyset$, and $W_2=\{1\}$, so $R_2=k[t^2,t^3]$.
Then $R_2$ is a non-integrally closed $d$-dimensional Cohen-Macaulay domain such that $(R_2)_P$ is Boole $v$-regular for every $P\in\Spec R_2$, whereas $R_2$ itself is not Boole $v$-regular.
Moreover, if we set $F=Q(D)$, then
\[
 \Cl^v(R_2)\cong(F,+).
\]
More precisely, for $\lambda=\frac{a}{b}\in F$ $(a,b\in D,\ b\ne0)$, we set
\[
 J_\lambda=(b+at)R_2+\fkp R_2.
\]
This is an invertible fractional ideal independent of the choice of representation of $\lambda$, and $\lambda\mapsto[J_\lambda]$ gives the above isomorphism.
In particular,
\[
 I=(t^2+t^3,t^4,t^5)R_2=t^2J_1
\]
is a nonprincipal invertible ideal whose class $[I]=[J_1]$ corresponds to $1\in F$.
\end{ex}
\begin{proof}
Let $\mathfrak m$ be the maximal ideal used in Example~\ref{ex:boole-anydim}.
The image of each $g_f$ modulo $\mathfrak m$ is $1+f(0,\ldots,0)\in\{1,2\}$, which is nonzero since $\operatorname{char}k\ne2$.
Thus, $W_2\cap\mathfrak m=\emptyset$, and $(R_2)_{\mathfrak mR_2}=A_{\mathfrak m}$ gives $\dim R_2=d$.
The conductor is $\fkp R_2=R_2:T_2=t^2T_2=(t^2,t^3)R_2$.
Here, the image of each $g_f$ modulo $\fkp R_2$ is $f$.
Since $D$ is a UFD, the image of every nonzero element of $D$ is invertible in $R_2/\fkp R_2$.
Hence
\[
 R_2/\fkp R_2=F
\]
and $\fkp R_2$ is a maximal ideal.
Since $(\fkp R_2)\cap A=\fkp$, we have
\[
 (R_2)_{\fkp R_2}=A_\fkp=F[t^2,t^3]_{(t^2,t^3)}.
\]
This is the one-dimensional Arf local ring appearing in Example~\ref{ex:boole-anydim}, and hence is Boole $v$-regular.
In particular, $\fkp R_2$ has height one.
Since $t\notin(R_2)_{\fkp R_2}$, we have $t\notin R_2$, so $R_2$ is not integrally closed.
If $P\in\Spec R_2\setminus\{\fkp R_2\}$, then $\fkp R_2\nsubseteq P$, and localizing the conductor gives $(R_2)_P=(T_2)_P$.
Since the right-hand side is a localization of the UFD $T_2$, it is also a UFD and hence is Boole $v$-regular.
Thus, $R_2$ is locally Boole $v$-regular.

We now compute the class group.
Each $g_f=t^2+f$ is irreducible in the UFD $B=D[t]$ by Eisenstein's criterion with respect to the prime element $f$ of $D$.
Hence, $T_2^\times$ is generated by $k^\times$ and $g_f^{\pm1}$ $(f\in\mathcal P)$.
All these generators are also units of $R_2$, so
\[
 T_2^\times=R_2^\times.
\]
Moreover, localizing $B/t^2B=D[t]/(t^2)$ gives
\[
 T_2/\fkp R_2=F[\varepsilon]/(\varepsilon^2),
\]
where $\varepsilon$ denotes the image of $t$ in $T_2/\fkp R_2$.
Let $\pi:T_2\longrightarrow F[\varepsilon]/(\varepsilon^2)$ denote the natural surjection.
Since $R_2/\fkp R_2=F$ and $\Ker\pi=\fkp R_2\subseteq R_2$, we have $\pi^{-1}(F)=R_2$.

Take $\lambda=\frac{a}{b}\in F$ $(a,b\in D,\ b\ne0)$.
Since $\pi(b+at)=b(1+\lambda\varepsilon)$, we have
\[
 J_\lambda=(b+at)R_2+\fkp R_2
 =\pi^{-1}\bigl(F(1+\lambda\varepsilon)\bigr).
\]
Thus, $J_\lambda$ is independent of the representation of $\lambda$, and $J_0=R_2$.
The module $J_\lambda$ is a nonzero submodule of the finitely generated $R_2$-module $T_2$ and satisfies $t^2J_\lambda\subseteq\fkp R_2\subseteq R_2$, so it is a nonzero fractional ideal.
Since $\pi(b+at)$ is a unit of $T_2/\fkp R_2$ and $J_\lambda$ contains $\fkp R_2$, we have
\[
 J_\lambda T_2=(b+at)T_2+\fkp R_2=T_2.
\]
Consequently, $J_\lambda(\fkp R_2)=t^2J_\lambda T_2=\fkp R_2$.
Since $\fkp R_2\subseteq J_\mu$, we obtain, for any $\lambda,\mu\in F$,
\[
 \fkp R_2=J_\lambda(\fkp R_2)\subseteq J_\lambda J_\mu.
\]
Furthermore, $(1+\lambda\varepsilon)(1+\mu\varepsilon)=1+(\lambda+\mu)\varepsilon$ in $T_2/\fkp R_2$, so
\[
 J_\lambda J_\mu=J_{\lambda+\mu}.
\]
Thus, $J_\lambda J_{-\lambda}=J_0=R_2$ shows that $J_\lambda$ is invertible, and the map
$ \Phi:(F,+)\longrightarrow\Cl^v(R_2),
 \lambda\longmapsto[J_\lambda]$
is a group homomorphism.
If $[J_\lambda]=[J_0]=[R_2]$, then $J_\lambda=cR_2$ for some $c\in Q(R_2)^\times$.
Extending to $T_2$ gives $T_2=cT_2$, so $c\in T_2^\times=R_2^\times$.
Hence, $J_\lambda=R_2$, and therefore $F(1+\lambda\varepsilon)=F$.
By comparing the coefficients of $\varepsilon$, we obtain $\lambda=0$.
Thus, $\Phi$ is injective.

To prove surjectivity, we use Proposition~\ref{5.8}.
Let $P\in\H_1^{\rm reg}(R_2)$.
Since $\H_1(R_2)\setminus \H_1^{\rm reg}(R_2)=\{\fkp R_2\}$, we have $P\ne\fkp R_2$.
The rings $R_2$ and $T_2$ coincide after localization away from the conductor.
Since $T_2$ is a localization of the UFD $B$, we can choose an irreducible element $h$ of $B=D[t]$ such that $P=hT_2\cap R_2$.
As $h$ is not associated to $t$, we may write
\[
 h=h_0+h_1t+t^2h_2(t),\qquad
 h_0,h_1\in D,\quad h_0\ne0,\quad h_2(t)\in D[t].
\]
Set $\nu=\frac{h_1}{h_0}\in F$. Then $\pi(h)=h_0(1+\nu\varepsilon)$.
For any $z\in T_2$, the condition $hz\in R_2$ holds if and only if $h_0(1+\nu\varepsilon)\pi(z)=\pi(h)\pi(z)\in F$, which is equivalent to $\pi(z)\in F(1-\nu\varepsilon)$, that is, $z\in J_{-\nu}$.
Hence, $P=hJ_{-\nu}$, and we obtain $[P]=\Phi(-\nu)$.
By the above construction, we have $R_2=R_2^*$, and the integral closure $\overline{R_2}=T_2$ is a finitely generated $R_2$-module.
Therefore, Proposition~\ref{5.8} gives
\[
 \Cl^v(R_2)
 =\bigl\langle[P]\mid P\in\H_1^{\rm reg}(R_2)\bigr\rangle
 =\Im\Phi.
\]
Thus, $\Phi$ is also surjective, and we obtain $\Cl^v(R_2)\cong(F,+)$.
Since $\Cl^v(R_2)\ne0$, Theorem~\ref{5.9} shows that $R_2$ is not Boole $v$-regular.

Finally, $\fkp R_2=(t^2,t^3)R_2$ gives
\[
 t^2J_1=t^2\bigl((1+t)R_2+\fkp R_2\bigr)
 =(t^2+t^3,t^4,t^5)R_2=I.
\]
Hence, $I$ is invertible, and $[I]=[J_1]\neq [J_0]=[R_2]$ shows that it is not principal. This completes the proof.
\end{proof}

\section*{Declaration of generative AI and AI-assisted technologies
in the manuscript preparation process}

The author used ChatGPT (OpenAI) to assist with English
translation, language revision, manuscript organization, literature
searches, and the exploration of proof strategies and mathematical
examples.
After using this tool, the author reviewed and edited the content
as needed and takes full responsibility for the content of the
published article.

\end{document}